\documentclass[11pt,reqno]{amsart}

\usepackage[T1]{fontenc}
\usepackage{lmodern}
\usepackage{microtype}
\usepackage{amsmath,amssymb,amsthm,mathtools}
\usepackage{enumitem}
\usepackage[colorlinks=true,linkcolor=blue,citecolor=blue,urlcolor=blue]{hyperref}
\usepackage[nameinlink,capitalise]{cleveref}
\usepackage[a4paper,margin=32mm]{geometry}

\numberwithin{equation}{section}

\newtheorem{theorem}{Theorem}[section]
\newtheorem{proposition}[theorem]{Proposition}
\newtheorem{lemma}[theorem]{Lemma}
\newtheorem{corollary}[theorem]{Corollary}
\newtheorem{problem}[theorem]{Problem}
\crefname{problem}{problem}{problems}
\Crefname{problem}{Problem}{Problems}
\theoremstyle{definition}
\newtheorem{definition}[theorem]{Definition}
\theoremstyle{remark}
\newtheorem{remark}[theorem]{Remark}

\newcommand{\ddc}{dd^{c}}

\newcommand{\bP}{\mathbb P}
\newcommand{\C}{\mathbb C}
\newcommand{\R}{\mathbb R}

\newcommand{\dist}{\operatorname{dist}}

\title[Exponential Green energies]{Exponential Green Energies, Quantization,
and Minimizers on Compact Surfaces}
\date{}

\author{Haakan Hedenmalm}
\address{
Department of Mathematics and Computer Sciences, St Petersburg State University,
St Petersburg, Russia;
Beijing Institute for Mathematical Sciences and Applications, Huairou District,
Beijing 101408, China;
Department of Mathematics, University of Aveiro, Aveiro, Portugal;
Department of Mathematics and Statistics,
University of Reading, RG6 6AX Reading, England}
\email{haakan00@gmail.com}

\author{Song-Yan Xie}
\address{State Key Laboratory of Mathematical Sciences, Academy of Mathematics
and Systems Science, Chinese Academy of Sciences, Beijing 100190, China;
School of Mathematical Sciences, University of Chinese Academy of Sciences,
Beijing 100049, China}
\email{xiesongyan@amss.ac.cn}

\subjclass[2020]{31A05, 30F15, 49Q22, 41A55}
\keywords{Green energy, compact Riemann surface, equidistribution,
Wasserstein distance, quantization, discrepancy, modulus of continuity,
minimizing configuration, Gibbs measure}

\begin{document}

\begin{abstract}
Let $X$ be a compact Riemann surface with a smooth positive probability form
$\mu$ and normalized Green kernel $U_X$.  We study sequences of configurations
$W_N\in X^N$ for which the exponential Green energy
\[
 B_\beta(W_N;X)=\int_X\exp\!\left(\beta\sum_{w\in W_N}U_X(z,w)\right)d\mu(z)
\]
is uniformly bounded at one fixed exponent $\beta>0$.  This single hypothesis
gives uniform $L^1$ control of the mean-zero Green potentials and a
second-order discrepancy estimate of order $N^{-1}$.  By heat smoothing we
obtain the optimal atomic transport scale
$W_1(\nu_{W_N},\mu)\asymp N^{-1/2}$, sharp quadrature estimates for every
concave modulus of continuity, mesoscopic counting and heat-regularization
bounds, and negative Sobolev estimates.  We also analyze the minimizers of the
exponential energy.  For each fixed $\beta$ they exist, contain no collisions,
satisfy the optimal transport scale, and obey an intrinsic force-balance equation.
The dependence of the optimal constant on $\beta$ is a strictly increasing
homeomorphism whose one-sided logarithmic derivatives are the extremal Gibbs
relative entropies of the active minimizers.

We then study the optimal constants themselves.  They cannot collapse to the
Jensen value $1$: we prove $\Theta_{N,\beta}(X)-1\geq c_X\beta^2$ uniformly in
$N$, at the exact order in $\beta$.  On the round sphere they are computable.  For
$N=2$ and every $\beta$ the energy is a ratio of gamma factors times a Gauss
hypergeometric function of the squared chordal distance to the antipode, the
minimizers are the antipodal pairs, and
$\Theta_{2,\beta}^{1/\beta}\to e/2$.  At the exponent $\beta=2$ and every $N$ the
energy equals $e^N/\bigl((N+1)N!\bigr)$ times the permanent of the Gram matrix of
the configuration; for $N=3$ this gives $\Theta_{3,2}=e^3/16$, with the
equilateral great-circle triangles as the unique minimizers.  A companion
Green-packing theorem supplies uniformly bounded-energy configurations for every
$N$, so the hypothesis is non-vacuous.
\end{abstract}

\maketitle

\tableofcontents

\section{Introduction}\label{sec:intro}

\subsection{The bounded exponential Green-energy class}
\label{subsec:energy-class}

Let $X$ be a connected compact Riemann surface and let $\mu$ be a smooth
positive $(1,1)$-form with $\int_Xd\mu=1$.  We use $\mu$ for both the form and
its measure.  Throughout we use the conventions
\[
 d^c=\frac{i}{2\pi}(\bar\partial-\partial),
 \qquad
 \ddc=\frac{i}{\pi}\partial\bar\partial,
\]
so that $\ddc\log|z|=\delta_0$ on $\C$, and the normalized Green kernel is the
unique symmetric function $U_X(z,w)$ with
\begin{equation}\label{eq:green-normalization}
 \ddc_zU_X(z,w)=\delta_w-\mu,
 \qquad
 \int_XU_X(z,w)\,d\mu(z)=0 .
\end{equation}
It is smooth off the diagonal and, in a local coordinate at $w$,
\begin{equation}\label{eq:green-local}
 U_X(z,w)=\log|z-w|+O(1);
\end{equation}
the joint form of this expansion is recorded in
\Cref{lem:green-joint-local}.  For the conformal metric whose area form is
$\mu$, our Laplace--Beltrami operator $\Delta$ is nonnegative and
$\ddc f=-(2\pi)^{-1}(\Delta f)\mu$.  We fix once and for all a smooth
Riemannian metric $g$ on $X$; all distances, Wasserstein distances, and
H\"older seminorms below are formed with $g$ unless stated otherwise, and any
other fixed metric changes only constants.  Constants may depend on the fixed
data $(X,\mu,g)$ and on $\beta$, but never on $N$.

For an ordered configuration $W=(w_1,\ldots,w_N)\in X^N$ put
\begin{equation}\label{eq:potential-and-measure}
 V_W:=\sum_{j=1}^NU_X(\,\cdot\,,w_j),
 \qquad
 \nu_W:=\frac1N\sum_{j=1}^N\delta_{w_j},
\end{equation}
and for $\beta>0$ define the exponential Green energy and its optimal value
\begin{equation}\label{eq:B-def}
 B_\beta(W;X):=\int_Xe^{\beta V_W}\,d\mu,
 \qquad
 \Theta_{N,\beta}(X):=\inf_{W\in X^N}B_\beta(W;X).
\end{equation}
Since $\int_XV_W\,d\mu=0$ and $\mu(X)=1$, Jensen's inequality gives
$B_\beta(W;X)\geq1$ always.  The singularities of $V_W$ are negative, so the
integrand vanishes rather than blows up at the points $w_j$.

The exponential Green energy \eqref{eq:B-def} arises in the study of
Bombieri-type inequalities and of equidistribution of points, where the
genus-one case and its explicit sigma-function products were analyzed in
\cite{EtayoHedenmalmOrtega}.  The point of view taken here is that the
consequences of a bounded exponential moment can be separated entirely from any
such explicit representation.

This paper is organized around a single hypothesis.

\begin{definition}[Bounded exponential Green energy]\label{def:energy-class}
Let $\beta>0$ and $K<\infty$.  A sequence of configurations $W_N\in X^N$,
$N\geq1$, belongs to the class $\mathcal B_\beta(K)$ if
\begin{equation}\label{eq:energy-hypothesis}
 B_\beta(W_N;X)\leq K
 \qquad\text{for every }N .
\end{equation}
\end{definition}

Every statement in \Cref{sec:discrepancy,sec:quantization,sec:mesoscopic} is
proved under \eqref{eq:energy-hypothesis} alone.  In particular, no
holomorphic, algebraic, or spectral origin for the configurations is assumed:
the hypothesis is a scalar integral bound, and it is the only input.  Proceeding
this way has two benefits.  It isolates exactly how much of the fine
distributional behavior of an $N$-point configuration is forced by one
exponential moment, and it makes the resulting theory applicable to any family
satisfying \eqref{eq:energy-hypothesis}, however obtained.

The hypothesis is non-vacuous, and indeed uniformly attainable, by the
companion Green-packing theorem.

\begin{theorem}[Attainability; \cite{PaperA}]\label{thm:packing-input}
For every compact Riemann surface $(X,\mu)$ there is a constant
$C_{X,\mu}<\infty$ and, for every $N\geq1$, a configuration $W_N\in X^N$ with
\[
 \sup_{z\in X}V_{W_N}(z)\leq C_{X,\mu}.
\]
Consequently $W_N\in\mathcal B_\beta(e^{\beta C_{X,\mu}})$ simultaneously for
every $\beta>0$, and
$1\leq\Theta_{N,\beta}(X)\leq e^{\beta C_{X,\mu}}$.
\end{theorem}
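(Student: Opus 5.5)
The construction partitions $X$ into $N$ cells of equal $\mu$-mass and bounded geometry, puts one point in each cell, and bounds $V_W$ by comparing it with the smeared potential, which vanishes identically. The "consequently" part is immediate once $\sup V_{W_N}\le C_{X,\mu}$ is known: then $e^{\beta V_{W_N}}\le e^{\beta C_{X,\mu}}$ pointwise, so $B_\beta(W_N;X)\le e^{\beta C_{X,\mu}}$ because $\mu(X)=1$. Together with the Jensen bound $B_\beta\ge1$ recorded after \eqref{eq:B-def}, this gives both the class membership and the two-sided bound on $\Theta_{N,\beta}(X)$. So everything reduces to producing configurations with uniformly bounded Green potential from above.

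\textbf{Step 1 (partition).} Partition $X$ into $N$ measurable cells $E_1,\dots,E_N$ with $\mu(E_j)=1/N$, $\operatorname{diam}_g E_j\le C N^{-1/2}$, and each $E_j$ containing a $g$-ball of radius $cN^{-1/2}$. To build it, fix a finite atlas of coordinate squares subordinate to a partition into finitely many topological quadrilaterals. In each quadrilateral, cut first by vertical curves and then by horizontal ones, in the manner of a Knothe-type mass-balancing, so that the masses come out equal; $\mu$ is comparable to Lebesgue measure, which keeps the aspect ratios bounded. Since the total mass in a quadrilateral need not be a multiple of $1/N$, the leftover fractional cells are handled by merging them across quadrilateral boundaries. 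This is a finite combinatorial adjustment that affects only $O(N^{1/2})$ cells and keeps bounded geometry.

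\textbf{Step 2 (potential comparison).} Choose $w_j\in E_j$ to be the center of the inscribed ball. Set $\mu_j:=N\mu|_{E_j}$, a probability measure. Since $\sum_j\mu_j/N=\mu$ and $\int_X U_X(z,w)\,d\mu(w)=0$ (by symmetry and the normalization \eqref{eq:green-normalization}), we have
\[
 V_W(z)=\sum_j\Bigl(U_X(z,w_j)-\int U_X(z,w)\,d\mu_j(w)\Bigr).
\]
The $j$-th term is $D_j(z):=U_X(z,w_j)-\int U_X(z,\cdot)\,d\mu_j$, and we split the sum according to the distance from $z$.

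For the \emph{far cells}, with $\dist(z,E_j)\ge 2\operatorname{diam}E_j$, expand $U_X(z,\cdot)$ to second order around $w_j$. The constant term cancels. The linear term is $O(\operatorname{diam}E_j/\dist(z,w_j))$ and does not vanish exactly, because $w_j$ is not the $\mu_j$-barycenter. To fix this, choose $w_j$ as the coordinate barycenter of $\mu_j$ in the chart; it still lies well inside the cell by bounded geometry. Then the linear term is only the chart-curvature error $O(\operatorname{diam}^2E_j)\cdot|\nabla U_X|$. The remainder is $O(\operatorname{diam}^2/\dist^2)$, and the smooth part of $U_X$ contributes $O(N^{-1})$ per cell. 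Summing over cells on dyadic annuli, where about $4^k$ cells sit at distance $2^kN^{-1/2}$, gives a total of $\sum_k 4^k\,4^{-k}$ per scale. This diverges logarithmically for the quadratic term. The cure is to note that the quadratic term is a harmonic quadrupole, so its annular average is smaller. Alternatively, one can simply bound it with the correct sign: $\log|z-w|$ is subharmonic in $w$, so $\int\log|z-w|\,d\mu_j(w)\ge$ the value at the center plus $O(N^{-1})$ terms from the density's variation. This is only the right direction up to the density correction. The planned fix is to use the potential of a disk: the logarithmic potential of normalized area measure on a disk equals $\log|z-w_j|$ exactly outside the disk. So inscribed disks give exact cancellation, and the region $E_j\setminus$disk is handled by the one-sided inequality $\log|z-w|\ge$ something integrable with $O(N^{-1})$ error per cell summed against $\dist^{-2}$. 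I expect this far-field summation to be the main obstacle, and the first thing I would try is to choose the cells as nearly hexagonal Voronoi-type cells. Then the first and second moments both match up to $O(N^{-3/2})$ and the dyadic sum converges.

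For the \emph{near cells}, of which there are $O(1)$, $U_X(z,w_j)\le\log\operatorname{diam}+O(1)$ from \eqref{eq:green-local}, since the singularity is negative. Meanwhile $\int U_X(z,\cdot)\,d\mu_j\ge\log(N^{-1/2})-O(1)$, because the logarithmic average over a set of area $\asymp N^{-1}$ is at least $\log$ of its radius minus a constant. Each near term is therefore $O(1)$ from above, and adding the two regimes gives $\sup_z V_W\le C_{X,\mu}$.
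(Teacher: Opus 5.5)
The paper does not prove the pointwise bound. It imports it from \cite{PaperA}, where the configurations are holomorphic zero divisors and spectral designs attached to an area-regular partition, and states only the immediate consequences. You derive those consequences correctly. Your near-cell estimate and the barycentric removal of the dipole term are also fine.

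\textbf{The gap is the far-field sum, which you flag but do not close.} Let $q_j:=\int(w-w_j)^2\,d\mu_j(w)$ be the complex second moment in the chart. Harmonicity of $w\mapsto\log|z-w|$ gives, for far cells,
\[
 D_j(z)=\operatorname{Re}\frac{q_j}{2(z-w_j)^2}+O(N^{-1})
 +O\Bigl(\frac{N^{-3/2}}{|z-w_j|^{3}}\Bigr).
\]
The last two terms sum to $O(1)$. So everything rests on $S(z):=\sum_{\rm far}\operatorname{Re}\,q_j/(2(z-w_j)^2)$ with $|q_j|\asymp N^{-1}$.

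The properties you require in Step~1 force no cancellation in $S$. These are equal mass, diameter $\lesssim N^{-1/2}$, and an inscribed ball of radius $\gtrsim N^{-1/2}$. Here is a counterexample. On a flat torus, partition a neighbourhood of $z_0=0$ into annular sectors of radial width $\sqrt{2/N}$ and arc length $(2N)^{-1/2}$, with barycentric nodes. Then $q_j\approx e^{2i\theta_j}/(8N)$ is aligned with $(z_0-w_j)^2=r_j^2e^{2i\theta_j}$. The quadrupole term of every far cell is therefore $\approx1/(16Nr_j^2)>0$, and $V_W(z_0)\geq c\log N$.

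None of your remedies repairs this:
\begin{itemize}
\item A quadrupole's annular average is small only when $q_j$ varies slowly with $j$.
\item The sub-mean-value inequality holds for disks centred at $w_j$, not for cells with barycentre $w_j$. For a general cell, $\operatorname{Re}\,q_j/(z-w_j)^2$ changes sign as $z$ turns around the cell.
\item The inscribed disk gains nothing. If $B_j$ is that disk, $E_j\setminus B_j$ carries a fixed fraction of the mass and a quadrupole of the same order.
\item Cells with $q_j=O(N^{-3/2})$ are never constructed. Images of a regular square or hexagonal lattice under a non-conformal map have $|q_j|\asymp N^{-1}$.
\end{itemize}

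\textbf{The missing idea is cancellation from regularity in $j$, not smallness of $q_j$.} It suffices that $q_j=N^{-1}Q(w_j)+O(N^{-3/2})$ for a fixed Lipschitz function $Q$, except on $O(\sqrt N)$ exceptional cells lying along finitely many seam curves.

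Then $S(z)$ is a Riemann sum for $\frac12\operatorname{Re}\int Q(w)(z-w)^{-2}\,d\mu(w)$ over the far region. The barycentric quadrature estimate on the dyadic shells $|z-w|\asymp2^kN^{-1/2}$ bounds the error by $\sum_k4^{-k}=O(1)$. The irregular inner boundary of the far region costs $O(1)$, since it lies in $|z-w|\asymp N^{-1/2}$.

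The integral itself is bounded. The kernel $(z-w)^{-2}$ integrates to zero over annuli centred at $z$, and also over half-annuli. Hence only $Q(w)-Q(z)=O(|w-z|)$ survives, and the seams where $Q$ jumps are handled the same way. The exceptional seam cells contribute $\sum_k2^k4^{-k}=O(1)$.

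Your Knothe construction already has this property, provided the cells in each quadrilateral are the images of a uniform square grid of mesh $h\asymp N^{-1/2}$ under one smooth map $T$ transporting Lebesgue measure to $\mu$. Write $DT(u_j)v=\alpha v+\beta\bar v$ and use $\int_{\rm square}v^2=\int_{\rm square}\bar v^2=0$. This gives $q_j=\tfrac13h^2\alpha\beta+O(h^3)$, a smooth function of $w_j$ times $N^{-1}$.

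So keep the Knothe grid and replace the hexagon plan with this Calder\'on--Zygmund cancellation. As written, the decisive far-field bound is unproved.
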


\Cref{thm:packing-input} is the only result of \cite{PaperA} used here, and it
enters only to guarantee that the classes $\mathcal B_\beta(K)$ are nonempty
and that $\Theta_{N,\beta}(X)$ is bounded; see
\Cref{cor:minimizer-in-class}.  Every other theorem below is independent of how
the configurations were produced.  Two consequences of the pointwise bound of
\Cref{thm:packing-input} are stronger than what \eqref{eq:energy-hypothesis}
gives, and we flag the distinction wherever it matters --- see
\Cref{rem:sharp-and-nonsharp}.

\subsection{Notation for discrepancy}\label{subsec:discrepancy-notation}

For $0<\alpha\leq1$ set
\[
 [f]_{C^{0,\alpha}(g)}
 :=\sup_{x\neq y}\frac{|f(x)-f(y)|}{d_g(x,y)^\alpha},
 \qquad
 \mathcal D_\alpha(\nu,\mu)
 :=\sup_{\substack{f\in C^{0,\alpha}(X;\R)\\ [f]_{C^{0,\alpha}(g)}\leq1}}
   \left|\int_Xf\,d(\nu-\mu)\right| .
\]
More generally, for a continuous nondecreasing concave modulus
$\omega:[0,\infty)\to[0,\infty)$ with $\omega(0)=0$ and $\omega(t)>0$ for
$t>0$, set
\[
 \mathcal D_\omega(\nu,\mu)
 :=\sup_{\substack{f\in C(X;\R)\\
        |f(x)-f(y)|\leq\omega(d_g(x,y))\ \forall x,y}}
   \left|\int_Xf\,d(\nu-\mu)\right| .
\]
Finally, the covering radius of an ordered configuration is
\[
 \mathcal R_g(W):=\sup_{x\in X}\min_{1\leq j\leq N}d_g(x,w_j).
\]

\subsection{The four headline results}\label{subsec:headline}

\emph{Energy to discrepancy.}  A single exponential moment controls the
positive excursions of $V_{W_N}$ with an exponential spatial tail, hence
controls $\|V_{W_N}\|_{L^1}$; since $\ddc V_{W_N}=N(\nu_{W_N}-\mu)$,
integration by parts converts this into a second-order discrepancy estimate of
order $N^{-1}$.  This is \Cref{prop:energy-to-discrepancy}, and it is the
foundational statement of the paper: every later estimate is obtained from it
by choosing a test function.

\emph{Sharp metric quantization.}  Heat smoothing turns the second-order
estimate into the optimal Lipschitz-dual rate, and the atomic lower bound of
\Cref{prop:metric-atomic-transport} matches it:
\[
 c_XN^{-1/2}\leq W_1(\nu_{W_N},\mu)\leq C_{X,K,\beta}N^{-1/2}.
\]
More generally, for every concave modulus $\omega$ the two-sided scale is
$\omega(N^{-1/2})$, with constants independent of $\omega$
(\Cref{thm:energy-concave-modulus}).  The exponent $1/2$ is the real-dimensional
quantization exponent of a surface, in agreement with the classical quantization
theory of probability distributions \cite{GrafLuschgy}.
\Cref{prop:abstract-modulus-quantization} isolates the metric-measure mechanism
responsible: on any compact metric probability space with
$\mathfrak m(B(x,r))\leq Cr^m$, the corresponding scale is $\omega(N^{-1/m})$.

\emph{Mesoscopic consequences.}  Testing against scaled bumps and against the
heat kernel gives two-sided point counts above the scale $N^{-1/4}$, uniform
heat regularization, a covering-radius bound, multiplicity bounds, and
negative Sobolev convergence; see \Cref{sec:mesoscopic}.

\emph{Minimizers.}  For each fixed $\beta>0$ and $N$, minimizers of
$B_\beta(\,\cdot\,;X)$ exist, consist of $N$ distinct points, satisfy the
optimal $W_1$ scale, and obey the intrinsic Euler--Lagrange balance equation
\eqref{eq:Euler-Lagrange-minimizer}.  The exponent dependence is described
exactly: $\beta\mapsto\Theta_{N,\beta}^{1/\beta}$ is a strictly increasing
homeomorphism of $(0,\infty)$ onto $(1,\Theta_{N,\infty})$ whose one-sided
logarithmic derivatives are the extremal Gibbs relative entropies of the
active minimizers (\Cref{prop:theta-Gibbs-envelope}).  See \Cref{sec:minimizers}.

\subsection{Sharp and nonsharp scales}\label{subsec:sharpness}

Because several different exponents appear, we state at the outset which are
optimal and which are not.

\begin{remark}[What is sharp under \eqref{eq:energy-hypothesis}]
\label{rem:sharp-and-nonsharp}
\begin{enumerate}[label=\textup{(\roman*)}]
\item The $W_1$ scale $N^{-1/2}$ and the concave-modulus scale
$\omega(N^{-1/2})$ are \emph{sharp} for $N$-atomic measures: the lower bounds
in \Cref{cor:Wasserstein-lower} and \Cref{thm:energy-concave-modulus} hold for
every probability measure supported on at most $N$ points.
\item The general $W_p$ bound $O(N^{-1/(2p)})$ of
\Cref{cor:energy-Wasserstein-p}, for $p>1$, is obtained from the macroscopic
$W_1$ estimate and the diameter of $X$ alone, and is \emph{not} claimed to be
sharp.  For the spectral configurations of \cite{PaperA} the sharp order
$N^{-1/2}$ does hold simultaneously for all $1\leq p\leq\infty$, because the
construction there couples each cell of an area-regular partition to its own
node and so retains transport information at the microscopic scale.  That is
information which \eqref{eq:energy-hypothesis} does not supply.
\item The energy-only covering radius $O(N^{-1/4})$ of
\Cref{cor:energy-covering-radius} is \emph{weaker} than the $O(N^{-1/2})$
covering enjoyed by both families of \cite{PaperA}.  It should be read as a
consequence of the weak hypothesis, not as the geometric scale of any particular
configuration.  The threshold $N^{-1/4}$ is exactly where the area
term $Nr^2$ and the discrepancy error $r^{-2}$ balance.
\item Optimal $W_1$ transport does \emph{not} imply microscopic separation:
transport distance is insensitive to a small number of tightly clustered pairs.
The correct energy-only substitute is the multiplicity bound
$O(\sqrt N)$ of \Cref{cor:energy-multiplicity}, and the transport-only
collision exponents are genuinely attained, by
\Cref{prop:flat-torus-multiplicity-sharp}.
\end{enumerate}
\end{remark}

\subsection{Organization}\label{subsec:organization}

\Cref{sec:green-preliminaries} records the joint local form of the Green
kernel.  \Cref{sec:discrepancy} proves the energy-to-discrepancy principle.
\Cref{sec:quantization} develops heat smoothing, the sharp $W_1$ scale, the
atomic lower bounds, and the concave-modulus theory, including the abstract
metric-measure version.  \Cref{sec:mesoscopic} collects the mesoscopic
counting, heat-regularization, multiplicity, and negative Sobolev estimates.
\Cref{sec:optimal-constants} studies the optimal constants
$\Theta_{N,\beta}$ as functions of the exponent, and
\Cref{sec:minimizers} proves existence, absence of collisions, optimal
equidistribution, and the balance law for minimizing configurations, ending
with the open microscopic problems.

\section{The Green kernel near the diagonal}\label{sec:green-preliminaries}

The variational arguments of \Cref{sec:minimizers} move the poles of the Green
potential, so we record the standard parameter-dependent form of
\eqref{eq:green-local} with a regular part that is smooth in \emph{both}
variables.

\begin{lemma}[Joint local form of the Green kernel]
\label{lem:green-joint-local}
Let $\xi:\Omega\to D\subset\C$ be a holomorphic coordinate and let
$D'\Subset D$ be a disk.  There is a function
$R_\xi\in C^\infty(D'\times D')$ such that
\begin{equation}\label{eq:green-joint-local}
 U_X\bigl(\xi^{-1}(x),\xi^{-1}(q)\bigr)
 =\log|x-q|+R_\xi(x,q),
 \qquad x,q\in D',\quad x\ne q.
\end{equation}
For a fixed finite coordinate atlas, all finite-order norms of the
corresponding regular parts are uniformly bounded after passing to
relatively compact subcharts.  Consequently, for every $\gamma>0$,
$e^{\gamma U_X(z,w)}$ extends continuously by zero across the diagonal.
Moreover, symmetry gives
\begin{equation}\label{eq:green-pole-variable}
 \ddc_wU_X(z,w)=\delta_z-\mu
\end{equation}
in the pole variable.
\end{lemma}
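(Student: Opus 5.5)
The plan is to prove the joint local form by subtracting the logarithmic singularity in the pole variable and then applying elliptic regularity to the remainder as a function of both variables. Since $\mu$ is smooth, write $\mu=\rho(x)\,\frac{i}{2}dx\wedge d\bar x$ on $D$ with $\rho\in C^\infty(D)$. Fix a cutoff $\chi\in C_c^\infty(D)$ with $\chi\equiv1$ on a disk $D''$ satisfying $D'\Subset D''\Subset D$. For $q\in D''$ set
\[
 R(x,q):=U_X(\xi^{-1}(x),\xi^{-1}(q))-\chi(x)\log|x-q|,
\]
extended to all $z\in X$ in the first slot, with the cutoff term read as $0$ off the chart. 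Since $\ddc\log|x-q|=\delta_q$ and $\chi\equiv1$ near $q$, the distribution $\ddc_xR(\cdot,q)=-\mu+F(x,q)$ has no delta part, where
\[
 F(x,q)=-\bigl(\ddc\chi\bigr)\log|x-q|-2\,d\chi\wedge d^c\log|x-q|
\]
is supported on $\operatorname{supp}\nabla\chi$, a region at positive distance from $D''$. Hence $F$ is $C^\infty$ jointly in $(x,q)\in X\times D''$. It then suffices to show $R$ is jointly smooth on $D'\times D'$, and to put $R_\xi:=R|_{D'\times D'}$.

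First I check the compatibility condition. The total mass $\int_X\ddc_xR(\cdot,q)$ vanishes automatically, so the equation $\ddc_xR=-\mu+F(\cdot,q)$ is solvable for each $q$. The solution is fixed by the normalization, which gives
\[
 \int_XR(x,q)\,d\mu(x)=-\int\chi(x)\log|x-q|\,d\mu(x),
\]
a smooth function of $q$ because it is a convolution of $\log|\cdot|$ with a smooth compactly supported density. Write $G$ for the solution operator of $\ddc$ on mean-zero data, taking mean-zero values. Then
\[
 R(\cdot,q)=G\bigl[F(\cdot,q)-\mu\bigr]+c(q),
\]
with $c$ smooth. Now $G$ is a bounded linear map $C^{k,\alpha}\to C^{k+2,\alpha}$ by Schauder theory on the compact surface. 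Differentiating under $G$ in $q$, which is justified because $q\mapsto F(\cdot,q)$ is smooth into every $C^k(X)$, gives $\partial_q^a\partial_{\bar q}^bR=G[\partial_q^a\partial_{\bar q}^bF]+\partial^a\bar\partial^bc$. So all mixed derivatives exist and are continuous, and $R\in C^\infty(X\times D'')$. This step is where the only real care is needed: the smoothness in the pole variable $q$. The subtraction is designed precisely so that the $q$-dependence enters only through smooth data away from the singularity.

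For uniformity over a fixed finite atlas, the bounds on $R$ depend only on finitely many norms of $\chi$, of $\rho$, and of the transition data, and on the distance from $D''$ to $\operatorname{supp}\nabla\chi$. Finitely many charts with relatively compact subcharts therefore give uniform bounds. The continuity of $e^{\gamma U_X}$ follows from the same decomposition. Near the diagonal we have $e^{\gamma U_X}=|x-q|^\gamma e^{\gamma R}$, which tends to $0$ as $x-q\to0$ uniformly, while $e^{\gamma U_X}$ is continuous off the diagonal. Finally, symmetry $U_X(z,w)=U_X(w,z)$ transfers \eqref{eq:green-normalization} to the pole variable, giving \eqref{eq:green-pole-variable}. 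Symmetry also makes the regular part symmetric, $R_\xi(x,q)=R_\xi(q,x)$, as a consistency check.
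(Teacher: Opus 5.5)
Your proof is correct, but it takes a different route from the paper's.

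\textbf{The paper's route (local).} It subtracts a local potential $\psi$ of $\mu$ together with $\log|x-q|$. The remainder $H_q$ is then harmonic on $D''\setminus\{q\}$ and bounded near $q$ by the a priori expansion \eqref{eq:green-local}, so the singularity is removable. Joint smoothness then comes from the Poisson representation on $D''$. Its boundary data depend smoothly on $q$ only because $U_X$ is already assumed jointly smooth off the diagonal.

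\textbf{Your route (global).} The cutoff subtraction leaves a smooth, $q$-dependent right-hand side $F(\cdot,q)-\mu$. The mean is the explicit convolution $c(q)$. The mean-zero inverse $G$ of $\ddc$, with its Schauder bounds, then converts smooth dependence of the data into smooth dependence of $R(\cdot,q)$ in every $C^{k+2,\alpha}(X)$.

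\textbf{What each buys.} The paper's argument needs only harmonic-function theory on a disk, but it takes \eqref{eq:green-local} and off-diagonal joint smoothness as inputs. Yours uses heavier global elliptic theory, but derives both facts from \eqref{eq:green-normalization} alone. It also gives the regular part on all of $X\times D''$ rather than only near the pole, and makes the uniformity over the atlas visible through the operator norm of $G$.

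\textbf{One cosmetic point.} If $\chi\equiv1$ only on $D''$, then $\operatorname{supp}\nabla\chi$ may touch $\partial D''$. To get the stated positive distance, take $\chi\equiv1$ on a neighbourhood of $\overline{D''}$. Smoothness of $F$ on $X\times D''$ and the bounds on $D'\times D'$ are unaffected either way.
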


\begin{proof}
Subtracting a local potential for $\mu$ and the logarithm makes the difference
harmonic away from the pole and bounded across it, hence harmonic throughout; its
boundary values then depend smoothly on the pole, and elliptic regularity with
parameters gives joint smoothness.

Choose disks $D'\Subset D''\Subset D$ and a smooth local potential
$\psi$ on $\xi^{-1}(D'')$ with $\ddc\psi=\mu$.  For $q\in D'$, the
distribution
\[
 H_q(x):=U_X\bigl(\xi^{-1}(x),\xi^{-1}(q)\bigr)
          -\log|x-q|+\psi\bigl(\xi^{-1}(x)\bigr)
\]
is harmonic on $D''\setminus\{q\}$ and bounded near $q$ by
\eqref{eq:green-local}; the singularity is removable.  On
$\partial D''$ the variables stay uniformly separated, so the boundary
values of $H_q$ depend smoothly on $q\in D'$.  The Poisson solution
operator, or equivalently interior elliptic regularity with parameters,
therefore makes $(x,q)\mapsto H_q(x)$ smooth on $D'\times D'$.  Taking
$R_\xi(x,q)=H_q(x)-\psi(\xi^{-1}(x))$ proves
\eqref{eq:green-joint-local}.  Compactness gives the finite-atlas bounds.
The continuous extension follows from
$e^{\gamma U_X}=|x-q|^\gamma e^{\gamma R_\xi}$, and
\eqref{eq:green-pole-variable} follows from symmetry and
\eqref{eq:green-normalization}.
\end{proof}

\section{Energy to discrepancy}\label{sec:discrepancy}

The whole paper rests on the following principle: one bounded exponential
moment forces an exponential spatial tail for the positive part of the
potential, hence an $L^1$ bound, hence --- through
$\ddc V_W=N(\nu_W-\mu)$ --- a second-order discrepancy estimate of order
$N^{-1}$.  No structure beyond \eqref{eq:energy-hypothesis} is used.

\begin{proposition}[Energy-to-discrepancy]
\label{prop:energy-to-discrepancy}
Let $W_N=(w_{1,N},\ldots,w_{N,N})\in X^N$ and set
\[
 V_{W_N}(z)=\sum_{j=1}^NU_X(z,w_{j,N}),
 \qquad \nu_{W_N}=\frac1N\sum_{j=1}^N\delta_{w_{j,N}}.
\]
Suppose that $B_\beta(W_N;X)\leq K$.  Then $K\geq1$.  Put
\[
 a_K:=\frac{\log K}{\beta}.
\]
For every $t\geq0$,
\begin{equation}\label{eq:energy-positive-tail}
 \mu\{z\in X:V_{W_N}(z)>a_K+t\}\leq e^{-\beta t}.
\end{equation}
Consequently, for every $1\leq q<\infty$,
\begin{equation}\label{eq:energy-positive-moments}
 \|(V_{W_N})_+\|_{L^q(X,\mu)}
 \leq\frac{\log K+\Gamma(q+1)^{1/q}}{\beta}.
\end{equation}
More precisely, if $0<\alpha<\beta$, then
\begin{equation}\label{eq:energy-shifted-exponential-moment}
 \int_X\exp\!\left(
   \alpha\bigl((V_{W_N})_+-a_K\bigr)_+
 \right)d\mu
 \leq\frac{\beta}{\beta-\alpha}.
\end{equation}
Finally,
\begin{equation}\label{eq:energy-L1-control}
 \|V_{W_N}\|_{L^1(X,\mu)}
 \leq\frac{2(1+\log K)}{\beta},
\end{equation}
and consequently
\begin{equation}\label{eq:energy-discrepancy}
 \left|\int_X\phi\,d(\nu_{W_N}-\mu)\right|
 \leq\frac{2(1+\log K)}{\beta N}\|\ddc\phi\|_\infty
 \qquad(\phi\in C^\infty(X)).
\end{equation}
\end{proposition}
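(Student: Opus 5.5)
The plan is to run the whole chain through Markov's inequality and the layer-cake formula. Only two structural facts are needed: $\int_XV_{W_N}\,d\mu=0$ and $\ddc V_{W_N}=N(\nu_{W_N}-\mu)$ in the sense of distributions.

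First, $K\geq1$ is Jensen's inequality, as noted after \eqref{eq:B-def}. In particular $a_K\geq0$.

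For the tail bound \eqref{eq:energy-positive-tail}, apply Markov's inequality to $e^{\beta V_{W_N}}$:
\[
\mu\{V_{W_N}>a_K+t\}\leq e^{-\beta(a_K+t)}\int_Xe^{\beta V_{W_N}}\,d\mu\leq Ke^{-\beta a_K}e^{-\beta t}=e^{-\beta t}.
\]
Next set $Y:=\bigl((V_{W_N})_+-a_K\bigr)_+$. Because $a_K\geq0$, we have $\{Y>t\}=\{V_{W_N}>a_K+t\}$ for every $t\geq0$, so $\mu\{Y>t\}\leq e^{-\beta t}$.

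The layer-cake formula then gives
\[
\int Y^q\,d\mu=\int_0^\infty qt^{q-1}\mu\{Y>t\}\,dt\leq\Gamma(q+1)\beta^{-q}.
\]
Since $(V_{W_N})_+\leq a_K+Y$, Minkowski's inequality in $L^q(\mu)$ (using $\mu(X)=1$) yields \eqref{eq:energy-positive-moments}. Similarly, for $0<\alpha<\beta$,
\[
\int e^{\alpha Y}\,d\mu=1+\int_0^\infty\alpha e^{\alpha t}\mu\{Y>t\}\,dt\leq1+\frac{\alpha}{\beta-\alpha}=\frac{\beta}{\beta-\alpha},
\]
which is \eqref{eq:energy-shifted-exponential-moment}.

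For the $L^1$ bound \eqref{eq:energy-L1-control}, the mean-zero normalization gives $\int(V_{W_N})_-\,d\mu=\int(V_{W_N})_+\,d\mu$. Hence $\|V_{W_N}\|_{L^1}=2\|(V_{W_N})_+\|_{L^1}$. The case $q=1$ of \eqref{eq:energy-positive-moments}, with $\Gamma(2)=1$, then gives the stated constant $2(1+\log K)/\beta$.

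For the discrepancy estimate \eqref{eq:energy-discrepancy}, sum \eqref{eq:green-normalization} over the poles to get $\ddc V_{W_N}=N(\nu_{W_N}-\mu)$. Pairing with $\phi\in C^\infty(X)$ and using symmetry of $\ddc$ on the closed surface gives
\[
N\int_X\phi\,d(\nu_{W_N}-\mu)=\int_X\phi\,\ddc V_{W_N}=\int_XV_{W_N}\,\ddc\phi .
\]
The last equality is just the definition of the distributional $\ddc$, which is legitimate because $V_{W_N}\in L^1$ has only logarithmic singularities. Writing $\ddc\phi=h\,\mu$ with $h=-(2\pi)^{-1}\Delta\phi$, the right side is bounded by $\|V_{W_N}\|_{L^1(\mu)}\|h\|_\infty$.

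The only point needing care is this last step. We must interpret $\|\ddc\phi\|_\infty$ as the sup norm of the density of $\ddc\phi$ with respect to $\mu$, and justify the integration by parts against the singular potential. One way is to excise small disks around the $w_j$ and use \Cref{lem:green-joint-local}: the boundary terms vanish because $r\log r\to0$. Everything else in the proof is elementary.
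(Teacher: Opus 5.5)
Your proposal is correct and follows the paper's proof essentially step for step: Jensen for $K\geq1$, Markov for the tail, the layer-cake formula and Minkowski for the $L^q$ moments, the same tail integration for the shifted exponential moment, the mean-zero identity for the $L^1$ bound, and distributional integration by parts against $\ddc V_{W_N}=N(\nu_{W_N}-\mu)$. One small correction to your optional excision argument: only the $V_{W_N}\,d^c\phi$ boundary term vanishes (it is $O(r\log r)$), whereas the $\phi\,d^cV_{W_N}$ term converges, up to orientation sign, to $\sum_j\phi(w_{j,N})$ and is exactly what produces the atomic part $N\nu_{W_N}$ of the identity.
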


\begin{proof}
The potential is locally integrable: near a node it is a finite sum of
logarithms plus a smooth function.  Each Green kernel has mean zero, hence
$\int_XV_{W_N}\,d\mu=0$.  Since $\mu(X)=1$, Jensen's inequality gives
\[
 1=\exp\!\left(\beta\int_XV_{W_N}\,d\mu\right)
 \leq\int_Xe^{\beta V_{W_N}}d\mu\leq K,
\]
so $K\geq1$ and $a_K\geq0$.  Markov's inequality gives
\[
 \begin{aligned}
 \mu\{V_{W_N}>a_K+t\}
 &\leq e^{-\beta(a_K+t)}\int_Xe^{\beta V_{W_N}}d\mu \\
 &\leq K e^{-\beta a_K}e^{-\beta t}=e^{-\beta t},
 \end{aligned}
\]
which is \eqref{eq:energy-positive-tail}.

Set $Y=((V_{W_N})_+-a_K)_+$.  Then
$\mu\{Y>t\}\leq e^{-\beta t}$ for $t\geq0$.  The layer-cake formula and
Tonelli's theorem yield
\[
 \|Y\|_{L^q}^q
 =q\int_0^\infty t^{q-1}\mu\{Y>t\}\,dt
 \leq q\int_0^\infty t^{q-1}e^{-\beta t}\,dt
 =\frac{\Gamma(q+1)}{\beta^q}.
\]
Since $(V_{W_N})_+\leq a_K+Y$, Minkowski's inequality proves
\eqref{eq:energy-positive-moments}.  The same tail integration gives, for
$0<\alpha<\beta$,
\[
 \begin{aligned}
 \int_Xe^{\alpha Y}d\mu
 &=1+\alpha\int_0^\infty e^{\alpha t}\mu\{Y>t\}\,dt \\
 &\leq1+\alpha\int_0^\infty e^{-(\beta-\alpha)t}\,dt
 =\frac{\beta}{\beta-\alpha},
 \end{aligned}
\]
proving \eqref{eq:energy-shifted-exponential-moment}.

Taking $q=1$ in \eqref{eq:energy-positive-moments} gives
\[
 \int_X(V_{W_N})_+d\mu\leq\frac{1+\log K}{\beta}.
\]
The mean-zero identity makes the integrals of the positive and negative
parts equal.  This proves \eqref{eq:energy-L1-control}.  Since
$\ddc V_{W_N}=N(\nu_{W_N}-\mu)$, distributional integration by parts gives
\eqref{eq:energy-discrepancy}.
\end{proof}

\begin{corollary}[Energy excludes large holes]
\label{cor:energy-covering-radius}
Let $W_N\in X^N$ satisfy $B_\beta(W_N;X)\leq K$, where $K$ and
$\beta>0$ are independent of $N$.  For any fixed Riemannian distance $d$
on $X$, its covering radius
\[
 \mathcal R(W_N):=\sup_{x\in X}\min_{w\in W_N}d(x,w)
\]
satisfies, for all sufficiently large $N$,
\begin{equation}\label{eq:energy-covering-radius}
 \mathcal R(W_N)\leq
 C_X\left(\frac{1+\log K}{\beta N}\right)^{1/4}.
\end{equation}
\end{corollary}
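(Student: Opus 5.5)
The plan is to test the discrepancy estimate \eqref{eq:energy-discrepancy} of \Cref{prop:energy-to-discrepancy} against a bump supported in a hole. Suppose $x_0\in X$ with $\min_j d(x_0,w_{j,N})\geq r$, so the open ball $B(x_0,r)$ contains no node. Fix once and for all a smooth profile $\chi:[0,\infty)\to[0,1]$ with $\chi=1$ on $[0,1/2]$ and $\chi=0$ on $[1,\infty)$. Work in a finite atlas of normal-type charts; for $r\leq r_X$ (the injectivity/chart scale) put $\phi_r(z)=\chi(|\xi(z)-\xi(x_0)|/(c r))$, where $c>0$ is chosen so that the chart ball of radius $cr$ lies in $B(x_0,r)$. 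This is possible because the chart is bi-Lipschitz to $d$ with constants depending only on $(X,d)$.

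Then $\int\phi_r\,d\nu_{W_N}=0$, since $\phi_r$ is supported inside the hole. On the other hand $\int\phi_r\,d\mu\geq \mu(\text{chart ball of radius }cr/2)\geq c_1r^2$, because $\mu$ is a smooth positive form. For the second derivatives, $\ddc\phi_r$ is $(\pi)^{-1}\partial\bar\partial$ of a function whose second derivatives are $O(r^{-2})$. Dividing by the positive density of $\mu$, which is bounded below (and interpreting $\|\ddc\phi\|_\infty$ as the sup of the density with respect to $\mu$, as in the proposition), we get $\|\ddc\phi_r\|_\infty\leq C_2r^{-2}$. Inserting this into \eqref{eq:energy-discrepancy} gives
\[
 c_1r^2\leq\left|\int\phi_r\,d(\nu_{W_N}-\mu)\right|\leq\frac{2(1+\log K)}{\beta N}C_2r^{-2},
\]
hence $r^4\leq (2C_2/c_1)(1+\log K)/(\beta N)$.

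Taking the supremum over holes, every $r<\mathcal R(W_N)$ with $r\leq r_X$ satisfies this bound, which yields \eqref{eq:energy-covering-radius} with $C_X=(2C_2/c_1)^{1/4}$. The restriction $r\leq r_X$ is where "sufficiently large $N$" enters. If $\mathcal R(W_N)>r_X$, apply the argument with $r=r_X$ to get $r_X^4\leq C(1+\log K)/(\beta N)$, which is false once $N$ is large in terms of $K,\beta$. So for large $N$ we have $\mathcal R(W_N)\leq r_X$ and the estimate applies. Alternatively, one can enlarge $C_X$ using $\mathcal R\leq\operatorname{diam}X$, but this is not needed.

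The main obstacle is bookkeeping rather than analysis: making the constants $c_1,C_2$ uniform in $x_0$ and $r$. This follows from compactness, using a finite atlas with relatively compact subcharts, the uniform comparability of $d$ with the chart metric, and two-sided bounds on the density of $\mu$. One should also note that \eqref{eq:energy-discrepancy} is stated for smooth $\phi$, which $\phi_r$ is, extended by zero outside the chart.
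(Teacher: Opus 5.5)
Your proposal is correct and follows essentially the same route as the paper. Both test \eqref{eq:energy-discrepancy} against a scaled bump supported in a node-free ball, with $\mu$-mass $\gtrsim r^2$ and $\|\ddc\phi\|_\infty\lesssim r^{-2}$, and both get ``sufficiently large $N$'' by first ruling out $\mathcal R(W_N)$ above the fixed chart scale (running the same argument at that scale) before balancing $r^2$ against $r^{-2}/N$.
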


\begin{proof}
Test the discrepancy estimate against a bump supported in a hole.  The bump has
$\mu$-mass of order $r^2$ but zero $\nu_{W_N}$-mass, while its $\ddc$ norm is
$O(r^{-2})$; balancing these against \eqref{eq:energy-discrepancy} bounds $r$.

Let $r=\mathcal R(W_N)$ and choose $x\in X$ with
$d(x,W_N)=r$.  There are uniform constants $r_*,c,C>0$ such that for every
$0<\rho\leq r_*$ one can choose a smooth cutoff $\phi_{x,\rho}$ satisfying
\[
 0\leq\phi_{x,\rho}\leq1,\qquad
 \phi_{x,\rho}=1\text{ on }B(x,\rho/4),\qquad
 \operatorname{supp}\phi_{x,\rho}\subset B(x,\rho/2),
\]
and
\begin{equation}\label{eq:covering-bump-estimates}
 \int_X\phi_{x,\rho}\,d\mu\geq c\rho^2,
 \qquad
 \|\ddc\phi_{x,\rho}\|_\infty\leq C\rho^{-2}.
\end{equation}
These are the standard scaled bump estimates in a finite atlas.

First, $r<r_*$ for all sufficiently large $N$.  Otherwise use
$\rho=r_*$ below: its supporting ball contains no node, while
\eqref{eq:energy-discrepancy} makes the fixed positive number on the left
at most $C/N$, a contradiction.  We may therefore set $\rho=r$.  Since the
support of $\phi_{x,r}$ contains no node,
\[
 \int_X\phi_{x,r}\,d\nu_{W_N}=0.
\]
Combining \eqref{eq:covering-bump-estimates} with
\eqref{eq:energy-discrepancy} gives
\[
 cr^2
 \leq\left|\int_X\phi_{x,r}\,d(\nu_{W_N}-\mu)\right|
 \leq\frac{2(1+\log K)}{\beta N}\,Cr^{-2}.
\]
Thus $r^4\leq C_X(1+\log K)/(\beta N)$, which is
\eqref{eq:energy-covering-radius}.
\end{proof}

\section{Sharp metric quantization}\label{sec:quantization}

Heat smoothing converts the second-order estimate of
\Cref{prop:energy-to-discrepancy} into the optimal Lipschitz-dual rate.  The
matching lower bound is a purely metric-measure fact about atomic measures, and
we prove it in that generality: this makes visible that the exponent $1/2$ comes
from the real dimension of the surface and not from the Green kernel.

\begin{corollary}[Optimal-scale transport bound]\label{cor:Wasserstein}
For every family of configurations $W_N=(w_{1,N},\ldots,w_{N,N})$
satisfying
$K:=\sup_N B_\beta(W_N;X)<\infty$---in particular, for the
holomorphic divisors and the spectral-design configurations---let
$\nu_N=N^{-1}\sum_{j=1}^N\delta_{w_{j,N}}$.  Then
\begin{equation}\label{eq:Wasserstein-bound}
 W_1(\nu_N,\mu)\leq C_{X,K,\beta}N^{-1/2},
\end{equation}
where $W_1$ is computed using any fixed Riemannian distance on $X$.
\end{corollary}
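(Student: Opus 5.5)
By Kantorovich--Rubinstein duality it suffices to show that for every $1$-Lipschitz $f$ with respect to $g$,
\[
\Bigl|\int_X f\,d(\nu_N-\mu)\Bigr|\leq C N^{-1/2}.
\]
A Lipschitz $f$ is not regular enough to feed into \eqref{eq:energy-discrepancy} directly, since that estimate needs $\|\ddc\phi\|_\infty$. The plan is therefore to smooth $f$ by the heat semigroup $P_s=e^{-s\Delta}$ of the conformal metric whose area form is $\mu$. This semigroup is self-adjoint on $L^2(\mu)$ and preserves $\mu$. We then split
\[
\int f\,d(\nu_N-\mu)=\int (f-P_sf)\,d\nu_N-\int (f-P_sf)\,d\mu+\int P_sf\,d(\nu_N-\mu).
\]
Any fixed Riemannian distance is bi-Lipschitz equivalent to $d_g$ and to the conformal distance, so we may work with whichever is convenient and adjust constants.

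The three terms are handled as follows.

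\emph{Smoothing error.} We need $\|f-P_sf\|_\infty\leq C\sqrt s\,\mathrm{Lip}(f)$. This follows from Gaussian upper bounds for the heat kernel on a compact surface, $p_s(x,y)\leq Cs^{-1}e^{-d(x,y)^2/(Cs)}$ for $0<s\leq1$, together with $\int p_s(x,y)\,d\mu(y)=1$. Writing $|f(x)-P_sf(x)|\leq\int d(x,y)\,p_s(x,y)\,d\mu(y)\lesssim\sqrt s$ handles both of the first two terms, each being at most $C\sqrt s$.

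\emph{Main term.} Apply \eqref{eq:energy-discrepancy} with $\phi=P_sf$, which is smooth. Since $\ddc P_sf=-(2\pi)^{-1}(\Delta P_sf)\mu$ and $\mu$ is a smooth positive form, we need $\|\Delta P_sf\|_\infty\leq C s^{-1/2}\mathrm{Lip}(f)$. The key observation is that $\Delta P_s f=\Delta P_s(f-c)$ for any constant $c$, and
\[
\Delta P_sf(x)=\int \Delta_x p_s(x,y)\,\bigl(f(y)-f(x)\bigr)\,d\mu(y),
\]
because $\int\Delta_xp_s(x,y)\,d\mu(y)=\Delta_x 1=0$, using symmetry of the kernel. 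The time-derivative Gaussian bound $|\partial_sp_s(x,y)|\leq Cs^{-2}e^{-d^2/(Cs)}$ then gives
\[
|\Delta P_sf(x)|\leq Cs^{-2}\int d(x,y)\,e^{-d(x,y)^2/(Cs)}\,d\mu(y)\lesssim s^{-1/2}.
\]
Hence the main term is at most $C(1+\log K)/(\beta N\sqrt s)$.

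Choosing $s=N^{-1}$ balances $\sqrt s$ against $1/(N\sqrt s)$ and yields $W_1(\nu_N,\mu)\leq C_{X,K,\beta}N^{-1/2}$, with constant of the form $C_X(1+(1+\log K)/\beta)$. For $N$ small, so that $s\leq1$ may fail, the bound is trivial since $W_1\leq\operatorname{diam}X$.

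I expect the main obstacle to be justifying the heat kernel estimates with the right constants: the Gaussian bounds for $p_s$ and for $\partial_sp_s$ uniformly for $0<s\leq1$ on a compact surface with a smooth metric. These are standard results (Li--Yau, or a parametrix construction) and I would quote them rather than reprove them. The remaining steps are bookkeeping.
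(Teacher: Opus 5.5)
Your proposal is correct and follows the paper's proof essentially step by step: Kantorovich--Rubinstein duality, heat smoothing for the conformal metric with area form $\mu$, an $O(\sqrt s)$ sup-norm smoothing error, an $O(s^{-1/2})$ bound on $\ddc P_sf$ obtained by cancellation against constants, \eqref{eq:energy-discrepancy} applied to $P_sf$, and the choice $s=N^{-1}$. The only variation is that you bound $\Delta P_sf$ through $\Delta_xp_s=-\partial_sp_s$ and the time-derivative Gaussian bound, whereas the paper bounds the full Hessian of the heat kernel; your version is slightly more economical, since $\ddc$ only involves the Laplacian.
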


\begin{proof}
The obstruction to applying \eqref{eq:energy-discrepancy} directly is that a
one-Lipschitz test function need not have bounded $\ddc$.  Heat smoothing repairs
this: it costs $O(\sqrt t)$ in sup norm and gains $O(t^{-1/2})$ in $\ddc$, and
balancing the two at $t=N^{-1}$ gives the optimal rate.

By Kantorovich--Rubinstein duality \cite[Ch.~5]{Villani} it is enough first to
test against real
functions $\phi$ with Lipschitz constant at most one for a fixed conformal
metric $g_\mu$ whose area form is $\mu$.  Let
$\phi_t=e^{-t\Delta_\mu}\phi$, for the nonnegative Laplace--Beltrami
operator of $g_\mu$.  Standard compact-manifold heat-kernel estimates
\cite[Chs.~15--16]{Grigoryan} give, for $0<t\leq1$,
\begin{equation}\label{eq:heat-smoothing-Lipschitz}
 \|\phi-\phi_t\|_{L^\infty}\leq C\sqrt t,
 \qquad
 \|\ddc\phi_t\|_{L^\infty}\leq Ct^{-1/2}.
\end{equation}
The first estimate follows by averaging the Lipschitz bound against the
heat kernel.  For the second, differentiate the heat kernel twice and use
its cancellation against constants; one derivative is absorbed by the
Lipschitz regularity of $\phi$.
More explicitly, if $H_t(x,y)$ is the heat kernel, then
\[
 \phi_t(x)-\phi(x)=\int_XH_t(x,y)(\phi(y)-\phi(x))\,d\mu(y).
\]
The Gaussian first-moment bound for $H_t$ is $O(\sqrt t)$, proving the
first estimate.  Since $\int_X\nabla_x^2H_t(x,y)\,d\mu(y)=0$,
\[
 \nabla^2\phi_t(x)=
 \int_X\nabla_x^2H_t(x,y)(\phi(y)-\phi(x))\,d\mu(y).
\]
The derivative Gaussian bound and
$|\phi(y)-\phi(x)|\leq d(x,y)$ make the last integral $O(t^{-1/2})$.
On the fixed surface, $|\ddc\phi_t|$ is bounded by a constant multiple of
$|\nabla^2\phi_t|$, giving the second estimate in
\eqref{eq:heat-smoothing-Lipschitz}.

Apply \eqref{eq:energy-discrepancy} to $\phi_t$.  Since both
measures have mass one,
\[
 \left|\int_X\phi\,d(\nu_N-\mu)\right|
 \leq2\|\phi-\phi_t\|_\infty
   +\frac{C}{N}\|\ddc\phi_t\|_\infty
 \leq C\left(\sqrt t+\frac{1}{N\sqrt t}\right).
\]
Taking $t=N^{-1}$ and then the supremum over all such $\phi$ proves
\eqref{eq:Wasserstein-bound} for the distance of $g_\mu$.  Any two fixed
Riemannian distances on the compact surface are bi-Lipschitz equivalent, so
the same order, with a changed constant, holds for the distance stated in
the corollary.
\end{proof}

The next proposition is the general atomic lower bound.  It is stated for an
arbitrary compact metric probability space because the same statement supplies
both the sharpness of the $W_1$ rate and, in
\Cref{cor:metric-multiplicity-profile}, the collision profile.

\begin{corollary}[Finite-order transport from energy alone]
\label{cor:energy-Wasserstein-p}
Under the hypotheses of \Cref{cor:Wasserstein}, for every fixed
$1\leq p<\infty$,
\begin{equation}\label{eq:energy-Wasserstein-p}
 W_p(\nu_N,\mu)\leq C_{X,K,\beta,p}N^{-1/(2p)}.
\end{equation}
\end{corollary}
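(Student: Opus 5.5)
The plan is to deduce the bound from \Cref{cor:Wasserstein} together with the boundedness of $X$, using the elementary comparison between $W_p$ and $W_1$ on a space of finite diameter. This matches \Cref{rem:sharp-and-nonsharp}(ii), which says the $W_p$ bound comes from the macroscopic $W_1$ estimate and the diameter alone.

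Fix the Riemannian distance $d$ used to define $W_p$, and let $D:=\operatorname{diam}_d(X)<\infty$, which is finite because $X$ is compact. The first step is to note that an optimal coupling $\pi$ for $W_1(\nu_N,\mu)$ exists on the compact space $X\times X$, by weak-$*$ compactness of the set of couplings and lower semicontinuity of the cost. Next, since $d(x,y)\le D$ on $X\times X$, we have $d(x,y)^p\le D^{p-1}d(x,y)$ for $p\ge1$, and integrating against $\pi$ gives
\[
 W_p(\nu_N,\mu)^p\le\int_{X\times X}d(x,y)^p\,d\pi(x,y)\le D^{p-1}\int_{X\times X}d(x,y)\,d\pi(x,y)=D^{p-1}W_1(\nu_N,\mu).
\]
Then \Cref{cor:Wasserstein} (with the same distance $d$, which it permits) gives $W_1(\nu_N,\mu)\le C_{X,K,\beta}N^{-1/2}$. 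Taking $p$-th roots yields
\[
 W_p(\nu_N,\mu)\le D^{(p-1)/p}C_{X,K,\beta}^{1/p}N^{-1/(2p)},
\]
which is \eqref{eq:energy-Wasserstein-p} with $C_{X,K,\beta,p}:=D^{1-1/p}C_{X,K,\beta}^{1/p}$. For $p=1$ this reduces to \Cref{cor:Wasserstein} itself.

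There is no real obstacle here. The only point needing care is existence of the optimal coupling, and even that can be avoided by working with an $\varepsilon$-optimal coupling and letting $\varepsilon\to0$.
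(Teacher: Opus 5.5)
Your proof is correct and is essentially the paper's own argument: the pointwise bound $d^p\le D^{p-1}d$ is integrated against an optimal (or near-optimal) $W_1$ coupling, then \Cref{cor:Wasserstein} is applied and a $p$-th root is taken. Your explicit constant $D^{1-1/p}C_{X,K,\beta}^{1/p}$ is the paper's renamed constant written out.
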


\begin{proof}
Fix the same Riemannian ground distance used for $W_1$, and let
$D=\operatorname{diam}(X)$.  For every coupling $\pi$,
\[
 \int_{X\times X}d(x,y)^p\,d\pi
 \leq D^{p-1}\int_{X\times X}d(x,y)\,d\pi.
\]
Apply this to an optimal $W_1$ coupling (or to an approximating sequence of
couplings) and then use \eqref{eq:Wasserstein-bound}.  It follows that
\[
 W_p(\nu_N,\mu)^p
 \leq D^{p-1}W_1(\nu_N,\mu)
 \leq C_{X,K,\beta,p}N^{-1/2}.
\]
Taking the $p$-th root (and renaming the fixed constant) gives
\eqref{eq:energy-Wasserstein-p}.
\end{proof}

\begin{proposition}[Metric-dimensional atomic transport]
\label{prop:metric-atomic-transport}
Let $(Y,d,\mathfrak m)$ be a compact metric probability space.  Suppose
that, for some $m>0$, $C_{\rm vol}>0$, and $r_0>0$,
\begin{equation}\label{eq:metric-small-ball-upper}
 \mathfrak m(B_d(y,r))\leq C_{\rm vol}r^m
 \qquad(y\in Y,\ 0<r\leq r_0).
\end{equation}
Then there is $c_*=c_*(m,C_{\rm vol},r_0)>0$ such that every probability
measure $\nu$ on $Y$ satisfies
\begin{equation}\label{eq:metric-atomic-transport}
 W_1^d(\nu,\mathfrak m)
 \geq c_*\sum_{x\in\operatorname{At}(\nu)}
              \nu(\{x\})^{1+1/m},
\end{equation}
where $\operatorname{At}(\nu)$ is the at most countable set of atoms of
$\nu$.
\end{proposition}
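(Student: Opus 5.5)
Since the right-hand side involves only the atoms of $\nu$, the plan is to build one test function by hand and use the trivial direction of Kantorovich--Rubinstein duality: for every $1$-Lipschitz $f$ on $Y$,
\[
W_1^d(\nu,\mathfrak m)\geq\int f\,d\nu-\int f\,d\mathfrak m .
\]
The natural choice is a sum of disjointly supported tents centred at the atoms. The tent at an atom $x$ has height comparable to its radius $r_x$, and $r_x$ is chosen so that the ball $B(x,r_x)$ carries $\mathfrak m$-mass at most a fixed fraction of $\nu(\{x\})$. By \eqref{eq:metric-small-ball-upper} this gives $r_x\approx(\nu(\{x\})/C_{\rm vol})^{1/m}$. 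The gain from the tent is then roughly $r_x\,\nu(\{x\})\approx \nu(\{x\})^{1+1/m}$. The obstacle is that the atoms may be close together, so the balls can overlap. The cure is to take the pointwise maximum of the tents, and to check that this Lipschitz function still gains enough.

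First I would reduce to a finite set of atoms. It suffices to prove \eqref{eq:metric-atomic-transport} for every finite subset $A\subset\operatorname{At}(\nu)$ and then take the supremum over $A$. For $x\in A$ write $a_x=\nu(\{x\})$ and set
\[
r_x=\min\bigl(r_0,(a_x/(2C_{\rm vol}))^{1/m}\bigr),
\]
so that $\mathfrak m(B(x,r_x))\leq a_x/2$. Define
\[
f(y)=\max_{x\in A}(r_x-d(x,y))_+ .
\]
This is a maximum of $1$-Lipschitz functions, hence $1$-Lipschitz, and it is nonnegative. On the $\nu$ side we keep only the atoms: $\int f\,d\nu\geq\sum_{x\in A}a_xf(x)\geq\sum_x a_xr_x$. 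On the $\mathfrak m$ side, $f\leq\max_x r_x\mathbf 1_{B(x,r_x)}\leq\sum_x r_x\mathbf 1_{B(x,r_x)}$, so $\int f\,d\mathfrak m\leq\sum_x r_x a_x/2$. Subtracting the two bounds gives
\[
W_1\geq\tfrac12\sum_{x\in A}a_xr_x .
\]
The overlap problem is harmless here because the $\mathfrak m$ side was bounded by a sum of tents, which dominates their maximum.

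It remains to bound $a_xr_x$ below by $c_*a_x^{1+1/m}$. If $r_x=(a_x/(2C_{\rm vol}))^{1/m}$, then $a_xr_x$ is exactly $(2C_{\rm vol})^{-1/m}a_x^{1+1/m}$. If instead $r_x=r_0$, then since $a_x\leq1$ we have $a_x^{1/m}\leq1$, so $r_0\geq r_0\,a_x^{1/m}$. Both cases are therefore covered by
\[
c_*=\tfrac12\min\bigl(r_0,(2C_{\rm vol})^{-1/m}\bigr).
\]
The step needing the most care is the $\mathfrak m$-side bound under overlapping balls, which the domination of the maximum by the sum settles. The only other subtlety is the passage from finite $A$ to the full countable set of atoms, which follows by monotone convergence since every term is nonnegative.
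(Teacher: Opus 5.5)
Your proof is correct. Your radius $r_x=\min\{r_0,(a_x/(2C_{\rm vol}))^{1/m}\}$ dominates the paper's $r_x=c_0a_x^{1/m}$ with $c_0=\min\{r_0,(2C_{\rm vol})^{-1/m}\}$, and you end with the same constant $c_*=c_0/2$.

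The real difference is that you argue on the dual side, while the paper argues on the primal side. The paper fixes an arbitrary coupling $\pi\in\Pi(\mathfrak m,\nu)$. The fiber $Y\times\{x\}$ carries $\pi$-mass $a_x$, and at most $\mathfrak m(B_d(x,r_x))\le a_x/2$ of it can lie in $B_d(x,r_x)\times\{x\}$. So in every coupling at least half of each atom travels distance at least $r_x$. Because the fibers over distinct atoms are disjoint subsets of $Y\times Y$, these contributions simply add, and the overlap of the balls $B_d(x,r_x)$ never has to be addressed. In your formulation the overlap does arise, and you absorb it with the domination $\max\le\sum$.

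What each route buys:
\begin{itemize}
\item Yours produces an explicit $1$-Lipschitz witness and uses only the trivial half of Kantorovich--Rubinstein duality. This matches the test-function viewpoint behind $\mathcal D_\alpha$ and $\mathcal D_\omega$ elsewhere in the paper.
\item The paper's gives a stronger structural statement about every coupling. Since it never uses the Lipschitz form of duality, it transfers verbatim to any nondecreasing cost $c(d)$. For example it immediately gives $W_p^p\ge\frac12\sum_x a_xr_x^p$ for $p>1$, where a dual proof would need $c$-concave potentials rather than tents.
\end{itemize}
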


\begin{proof}
The point is that an atom of mass $a$ cannot be reached cheaply: the ambient
measure puts only $O(a)$ mass within distance $a^{1/m}$ of it, so at least half
the atom's mass must travel that far.  Summing over atoms, which occupy disjoint
target fibers, gives the claim.

Write $a_x=\nu(\{x\})$ and choose
\[
 c_0:=\min\{r_0,(2C_{\rm vol})^{-1/m}\},
 \qquad r_x:=c_0a_x^{1/m}.
\]
Then $r_x\leq r_0$, and \eqref{eq:metric-small-ball-upper} gives
\begin{equation}\label{eq:atom-half-mass-ball}
 \mathfrak m(B_d(x,r_x))
 \leq C_{\rm vol}c_0^ma_x\leq\frac{a_x}{2}.
\end{equation}
Let $\pi\in\Pi(\mathfrak m,\nu)$ be any coupling, with first marginal
$\mathfrak m$ and second marginal $\nu$.  Since
$\pi(Y\times\{x\})=a_x$ and
$\pi(B_d(x,r_x)\times\{x\})\leq\mathfrak m(B_d(x,r_x))$,
\eqref{eq:atom-half-mass-ball} implies
\[
 \pi\bigl((Y\setminus B_d(x,r_x))\times\{x\}\bigr)
 \geq\frac{a_x}{2}.
\]
These subsets of $Y\times Y$ are pairwise disjoint as $x$ ranges over the
atoms.  Thus, for every finite $F\subset\operatorname{At}(\nu)$,
\[
 \int_{Y\times Y}d(y,z)\,d\pi(y,z)
 \geq\frac12\sum_{x\in F} a_xr_x
 =\frac{c_0}{2}\sum_{x\in F}a_x^{1+1/m}.
\]
Taking the supremum over all finite $F$ allows all atoms.  Taking the
infimum over $\pi$ then proves \eqref{eq:metric-atomic-transport} with
$c_*=c_0/2$.
\end{proof}

\begin{corollary}[Sharpness of the surface transport scale]
\label{cor:Wasserstein-lower}
There is $c_X>0$ such that every probability measure supported on at most
$N$ points satisfies
\begin{equation}\label{eq:Wasserstein-lower}
 W_1(\nu,\mu)\geq c_XN^{-1/2}.
\end{equation}
Consequently the rate in \eqref{eq:Wasserstein-bound} is optimal up to its
constant.
\end{corollary}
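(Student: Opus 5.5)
We will deduce this corollary from \Cref{prop:metric-atomic-transport}, applied with $(Y,d,\mathfrak m)=(X,d_g,\mu)$ and $m=2$. The first step is to check the small-ball hypothesis \eqref{eq:metric-small-ball-upper}. Because $\mu$ is a smooth positive form on a compact surface, its density against the Riemannian area of $g$ is bounded above. On a compact surface the $g$-area of a geodesic ball is at most $C r^2$ for $r$ below the injectivity radius, uniformly in the centre. Together these give $\mu(B(y,r))\leq C_{\rm vol}r^2$ for $0<r\leq r_0$, with constants depending only on $(X,\mu,g)$.

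Next, let $\nu$ be a probability measure supported on at most $N$ points. Then $\nu$ is purely atomic, with atom masses $a_1,\ldots,a_n$, where $n\leq N$ and $\sum a_i=1$. \Cref{prop:metric-atomic-transport} gives
\[
 W_1(\nu,\mu)\geq c_*\sum_{i=1}^n a_i^{3/2}.
\]
To bound the right-hand side from below, apply H\"older's inequality, or equivalently Jensen's inequality for the convex function $s\mapsto s^{3/2}$. This yields
\[
 \sum_{i=1}^n a_i^{3/2}\geq n\Bigl(\frac1n\sum_{i=1}^n a_i\Bigr)^{3/2}=n^{-1/2}\geq N^{-1/2}.
\]
Taking $c_X=c_*$ proves \eqref{eq:Wasserstein-lower} for the distance $d_g$. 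By bi-Lipschitz equivalence, the same bound holds for any other fixed Riemannian distance, with a changed constant.

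The optimality claim then follows at once. The configuration measures $\nu_N$ are supported on at most $N$ points, so \eqref{eq:Wasserstein-lower} matches the upper bound \eqref{eq:Wasserstein-bound} up to constants.

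The only step with real content is the uniform small-ball estimate. It is routine, resting on compactness and normal coordinates, so there is no serious obstacle.
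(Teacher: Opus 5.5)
Your proposal is correct and follows the paper's proof essentially verbatim. Like the paper, you verify the small-ball bound with $m=2$, apply \Cref{prop:metric-atomic-transport}, and use convexity of $s\mapsto s^{3/2}$ to get $\sum_i a_i^{3/2}\geq n^{-1/2}\geq N^{-1/2}$; your added remarks on the uniform small-ball estimate and on changing the Riemannian distance are accurate details the paper leaves implicit.
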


\begin{proof}
The fixed surface satisfies \eqref{eq:metric-small-ball-upper} with $m=2$.
If the nonzero masses of $\nu$ are $a_1,\ldots,a_M$, where $M\leq N$, then
convexity and $\sum_i a_i=1$ give
\[
 \sum_{i=1}^M a_i^{3/2}\geq M^{-1/2}\geq N^{-1/2}.
\]
Apply \eqref{eq:metric-atomic-transport}.
\end{proof}

\begin{corollary}[Atomic mass and the multiplicity moment]
\label{cor:atom-transport-lower}
There is $c_X>0$ such that every probability measure $\nu$ on $X$ and every
$x\in X$ satisfy
\begin{equation}\label{eq:atom-transport-lower}
 W_1(\nu,\mu)\geq c_X\nu(\{x\})^{3/2}.
\end{equation}
Moreover, if
$\nu_N=N^{-1}\sum_{j=1}^N\delta_{y_j}$ and the distinct nodes have
multiplicities $k_1,k_2,\ldots$, then
\begin{equation}\label{eq:transport-multiplicity-moment}
 W_1(\nu_N,\mu)
 \geq c_XN^{-3/2}\sum_i k_i^{3/2}.
\end{equation}
\end{corollary}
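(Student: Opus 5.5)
Both inequalities are direct specializations of \Cref{prop:metric-atomic-transport} with $m=2$, in the same way as \Cref{cor:Wasserstein-lower}. The plan is to check the small-ball hypothesis once for $(X,g,\mu)$ and then read off the two bounds by keeping either one term or all terms of \eqref{eq:metric-atomic-transport}.

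\emph{Step 1: small-ball bound.} Since $\mu$ is a smooth positive form and $g$ is smooth on the compact surface $X$, the density of $\mu$ with respect to the $g$-area form is bounded above. A finite atlas of normal-coordinate charts gives uniform constants $r_0>0$ and $C$ with $\operatorname{Area}_g(B(y,r))\le Cr^2$ for all $y\in X$ and $0<r\le r_0$. Hence
\[
 \mu(B(y,r))\le C_{\rm vol}r^2 \qquad (y\in X,\ 0<r\le r_0),
\]
which is \eqref{eq:metric-small-ball-upper} with $m=2$. \Cref{prop:metric-atomic-transport} then gives a constant $c_X:=c_*(2,C_{\rm vol},r_0)>0$ such that
\[
 W_1(\nu,\mu)\ge c_X\sum_{x\in\operatorname{At}(\nu)}\nu(\{x\})^{3/2}
\]
for every probability measure $\nu$ on $X$.

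\emph{Step 2: a single atom.} Every term in this sum is nonnegative. Dropping all terms except the one at the chosen point $x$ gives \eqref{eq:atom-transport-lower}. If $\nu(\{x\})=0$ the inequality is trivial.

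\emph{Step 3: the multiplicity moment.} For $\nu_N$, the atoms are exactly the distinct nodes, and the node of multiplicity $k_i$ has mass $k_i/N$. Substituting into the full sum gives
\[
 \sum_i (k_i/N)^{3/2}=N^{-3/2}\sum_i k_i^{3/2},
\]
which is \eqref{eq:transport-multiplicity-moment}.

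\emph{Main obstacle.} The only step with any content is the uniform volume bound in Step 1, and it is routine: compactness and smoothness give uniform comparability of $\mu$ with Euclidean area in charts. As in earlier proofs, switching between $g$ and any other fixed metric only changes the constants, because the metrics are bi-Lipschitz equivalent.
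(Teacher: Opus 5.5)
Your proposal is correct and follows the paper's proof: both inequalities are the case $m=2$ of \eqref{eq:metric-atomic-transport}, keeping a single nonnegative term for the first bound and substituting $\nu_N(\{x_i\})=k_i/N$ for the second. Your Step 1 spells out the small-ball verification that the paper takes for granted here and in \Cref{cor:Wasserstein-lower}.
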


\begin{proof}
Both assertions are the case $m=2$ of
\eqref{eq:metric-atomic-transport}; for the second one use
$\nu_N(\{x_i\})=k_i/N$.
\end{proof}

Reading the moment estimate against an optimal-order transport hypothesis turns it
into a full profile for the multiplicities: how many heavy sites there can be, how
much of the mass they can carry, and how large the largest can be.

\begin{corollary}[Multiplicity profile at the quantization scale]
\label{cor:metric-multiplicity-profile}
Assume the hypotheses of \Cref{prop:metric-atomic-transport}, and let
\[
 \nu_N=\frac1N\sum_{j=1}^N\delta_{y_j},
 \qquad A>0,\qquad
 W_1^d(\nu_N,\mathfrak m)\leq A N^{-1/m}.
\]
Write $x_1,\ldots,x_M$ for the distinct nodes and $k_i$ for the
multiplicity of $x_i$.  Put
\[
 q=1+\frac1m,\qquad C_A=\frac{A}{c_*},
 \qquad c_A=\left(\frac{c_*}{A}\right)^m,
\]
where $c_*$ is the structural constant in
\Cref{prop:metric-atomic-transport}.  Then
\begin{equation}\label{eq:metric-multiplicity-moment}
 \sum_{i=1}^M k_i^q\leq C_A N,
 \qquad
 M\geq c_A N.
\end{equation}
Moreover, for every real $T\geq1$,
\begin{align}
 \#\{i:k_i\geq T\}
 &\leq C_A N T^{-q},
 \label{eq:heavy-site-count}\\
 \frac1N\sum_{\{i:k_i\geq T\}}k_i
 &\leq C_A T^{-1/m}.
 \label{eq:heavy-entry-fraction}
\end{align}
If $k_{(1)}\geq k_{(2)}\geq\cdots$ is the decreasing rearrangement, then
\begin{equation}\label{eq:ordered-multiplicity-profile}
 k_{(j)}\leq C_A^{m/(m+1)}
              \left(\frac Nj\right)^{m/(m+1)}
 \qquad(1\leq j\leq M).
\end{equation}
In particular, under the surface energy hypothesis of
\Cref{cor:Wasserstein}, one has
\begin{equation}\label{eq:surface-energy-collision-profile}
 \#\operatorname{supp}\nu_N\geq c_{X,K,\beta}N,
 \qquad
 \frac1N\sum_{\{i:k_i\geq T\}}k_i
 \leq C_{X,K,\beta}T^{-1/2}.
\end{equation}
\end{corollary}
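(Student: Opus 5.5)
The plan is to feed the transport hypothesis into \Cref{prop:metric-atomic-transport} once, and then read off every item by elementary counting. With $\nu_N(\{x_i\})=k_i/N$, \eqref{eq:metric-atomic-transport} gives
\[
 AN^{-1/m}\geq W_1^d(\nu_N,\mathfrak m)\geq c_*\sum_{i=1}^M\Bigl(\frac{k_i}{N}\Bigr)^{q}
 =c_*N^{-q}\sum_i k_i^q .
\]
Since $N^{q-1/m}=N$, this is $\sum_ik_i^q\leq (A/c_*)N=C_AN$, the first half of \eqref{eq:metric-multiplicity-moment}. For the lower bound on $M$, I would use H\"older (or convexity of $t\mapsto t^q$) on $\sum_ik_i=N$:
\[
 N=\sum_i k_i\leq M^{1-1/q}\Bigl(\sum_ik_i^q\Bigr)^{1/q},
\]
so $N^q\leq M^{q-1}C_AN$. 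Because $q-1=1/m$, this gives $M^{1/m}\geq N/C_A\cdot N^{-(q-1)}\cdot$, that is, $M\geq C_A^{-m}N=c_AN$.

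For the tail statements I would use Chebyshev-type bounds on the moment. Each $i$ with $k_i\geq T$ contributes at least $T^q$ to $\sum k_i^q$, which gives \eqref{eq:heavy-site-count}. For \eqref{eq:heavy-entry-fraction}, on $\{k_i\geq T\}$ we have $k_i\leq k_i^qT^{1-q}=k_i^qT^{-1/m}$. Summing and dividing by $N$ gives $C_AT^{-1/m}$. For the ordered profile, the $j$ largest multiplicities all satisfy $k_{(i)}\geq k_{(j)}$, so $jk_{(j)}^q\leq C_AN$. Taking the $q$-th root with $1/q=m/(m+1)$ gives \eqref{eq:ordered-multiplicity-profile}. (The constraint $T\geq1$ is not really needed, only harmless.)

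Finally, for the surface statement \eqref{eq:surface-energy-collision-profile}, I would take $Y=X$ with the fixed metric and $\mathfrak m=\mu$. Smoothness of $\mu$ and compactness give \eqref{eq:metric-small-ball-upper} with $m=2$. \Cref{cor:Wasserstein} supplies $W_1\leq C_{X,K,\beta}N^{-1/2}$, so $A=C_{X,K,\beta}$, and the bounds above specialize with $m=2$.

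There is no real obstacle here. The only points needing care are the exponent bookkeeping ($q-1=1/m$ and $q-1/m=1$) and checking that the atoms of $\nu_N$ are exactly the distinct nodes with masses $k_i/N$.
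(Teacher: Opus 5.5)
Your proposal is correct and follows the paper's proof step for step: the atomic lower bound of \Cref{prop:metric-atomic-transport} gives the moment bound, convexity (your H\"older step) gives $M\geq c_AN$, Chebyshev-type truncation gives the tails and the ordered profile, and \Cref{cor:Wasserstein} with $m=2$ gives the surface case. The only blemish is your intermediate line for $M$: from $N^q\leq M^{q-1}C_AN$ one gets $M^{1/m}\geq N^{1/m}/C_A$, not $N^{1-1/m}/C_A$ as written (the two agree only when $m=2$), and the correct inequality does yield your stated conclusion $M\geq C_A^{-m}N=c_AN$.
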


\begin{proof}
The moment bound comes from feeding the transport hypothesis into the atomic
lower bound.  Convexity then converts it into the count of distinct sites, and
Chebyshev-type truncation into the two tail estimates and the ordered profile.

By \eqref{eq:metric-atomic-transport},
\[
 c_*N^{-q}\sum_i k_i^q
 \leq W_1^d(\nu_N,\mathfrak m)
 \leq A N^{-1/m}.
\]
Since $q=1+1/m$, this gives
$\sum_i k_i^q\leq(A/c_*)N$.  Convexity and $\sum_i k_i=N$ give
\[
 \sum_{i=1}^M k_i^q
 \geq M\left(\frac NM\right)^q
 =N^qM^{1-q}.
\]
Comparison with the upper bound and $q-1=1/m$ yields
$M\geq(c_*/A)^mN$.

Every index with $k_i\geq T$ contributes at least $T^q$ to the moment,
which proves \eqref{eq:heavy-site-count}.  On the same set,
$k_i\leq T^{1-q}k_i^q$; summation and division by $N$ prove
\eqref{eq:heavy-entry-fraction}.  Finally,
$j k_{(j)}^q\leq\sum_i k_i^q$ gives
\eqref{eq:ordered-multiplicity-profile}.  The surface statement follows
from \eqref{eq:Wasserstein-bound} with $m=2$.
\end{proof}

The transport-only collision exponents just obtained are attained, so no
argument using $W_1$ alone can improve them.  The following construction is
\emph{not} a bounded-energy family; it shows the limits of the hypothesis
$W_1=O(N^{-1/m})$ in isolation, and for $m=2$ its multiplicity $N^{2/3}$ is
excluded by the stronger energy estimate of
\Cref{cor:energy-multiplicity} below.

\begin{proposition}[Sharpness of the multiplicity profile on flat tori]
\label{prop:flat-torus-multiplicity-sharp}
Let $m\geq1$ be an integer, and give
$\mathbb T^m=\mathbb R^m/\mathbb Z^m$ its flat distance and normalized
Lebesgue measure $\mathfrak m$.  Along the subsequence
\[
 N=r^{m(m+1)},\qquad r=2,3,\ldots,
\]
there are equal-weight empirical measures
$\nu_N=N^{-1}\sum_{j=1}^N\delta_{y_j}$ such that
\begin{equation}\label{eq:flat-torus-sharp-transport}
 c_mN^{-1/m}\leq W_1(\nu_N,\mathfrak m)
 \leq C_mN^{-1/m}.
\end{equation}
Writing $q=1+1/m$, their distinct-site multiplicities consist of one value
\[
 K=N^{m/(m+1)}
\]
and $N-K$ values equal to one.  Consequently,
\begin{equation}\label{eq:flat-torus-sharp-moment}
 \sum_i k_i^q=2N-K,\qquad
 \#\operatorname{supp}\nu_N=N-K+1.
\end{equation}
At the threshold $T=K$,
\begin{equation}\label{eq:flat-torus-sharp-tails}
 \#\{i:k_i\geq T\}=1=NT^{-q},\qquad
 \frac1N\sum_{\{i:k_i\geq T\}}k_i
 =\frac KN=T^{-1/m}.
\end{equation}
Thus the moment order, the ordered-profile bound at rank $j=1$ (hence the
maximal-multiplicity exponent), and both tail powers in
\Cref{cor:metric-multiplicity-profile} are sharp under an optimal-order
$W_1$ hypothesis alone.
\end{proposition}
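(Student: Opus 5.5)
The plan is to build $\nu_N$ by hand from the cubic grid, merging one mesoscopic block of grid points into a single heavy atom. Set $n=r^{m+1}$, so that $n^m=r^{m(m+1)}=N$. Partition $\mathbb T^m$ into the $N$ half-open cubes $Q_\kappa$ of side $1/n=N^{-1/m}$, $\kappa\in(\mathbb Z/n\mathbb Z)^m$, and let $z_\kappa$ be their centres.

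Group the cubes into blocks of $r^m$ consecutive cubes per coordinate direction. Each block is a cube of side $r^m/n=1/r$ and contains $r^{m^2}=N^{m/(m+1)}=K$ grid cubes. Fix one such block $\mathcal Q$. Define $\nu_N$ by:
\begin{itemize}
\item placing one point at $z_\kappa$ for every cube $Q_\kappa$ not in $\mathcal Q$;
\item placing all $K$ remaining points at the centre $z_*$ of $\mathcal Q$.
\end{itemize}
This uses exactly $N$ points, with one site of multiplicity $K$ and $N-K$ sites of multiplicity one.

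\emph{Combinatorial identities.} Since $K^q=K^{1+1/m}=N^{\frac m{m+1}\cdot\frac{m+1}m}=N$, we get $\sum_ik_i^q=N+(N-K)=2N-K$ and $\#\operatorname{supp}\nu_N=N-K+1$, which is \eqref{eq:flat-torus-sharp-moment}. At $T=K$ only the heavy site qualifies, so $\#\{i:k_i\geq T\}=1=NT^{-q}$. Moreover $K/N=N^{-1/(m+1)}=K^{-1/m}$, which gives \eqref{eq:flat-torus-sharp-tails}. The sharpness remarks then follow by comparison with \eqref{eq:metric-multiplicity-moment}, \eqref{eq:heavy-site-count}, \eqref{eq:heavy-entry-fraction}, and \eqref{eq:ordered-multiplicity-profile} at $j=1$, whose right-hand side is of order $N^{m/(m+1)}=K$.

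\emph{Upper bound in \eqref{eq:flat-torus-sharp-transport}.} Use the explicit coupling that transports $\mathfrak m|_{Q_\kappa}$ to $z_\kappa$ for each cube outside $\mathcal Q$, and transports $\mathfrak m|_{\mathcal Q}$ (mass $K/N$) to $z_*$. The first part costs at most $\tfrac{\sqrt m}{2n}\leq C_mN^{-1/m}$ in total. The second costs at most
\[
\frac KN\cdot\frac{\sqrt m}{2r}=\frac{\sqrt m}{2}\,N^{-\frac1{m+1}}N^{-\frac1{m(m+1)}}=\frac{\sqrt m}{2}\,N^{-1/m}.
\]
The exponents combine exactly as required, and this balance is the point of the choice $N=r^{m(m+1)}$. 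For $r\geq2$ the block side is $1/r\leq1/2$, so flat-torus distances are realised within the block and no wrap-around issue arises.

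\emph{Lower bound in \eqref{eq:flat-torus-sharp-transport}.} Apply \Cref{prop:metric-atomic-transport} on $(\mathbb T^m,d,\mathfrak m)$, which satisfies \eqref{eq:metric-small-ball-upper} with $r_0=1/2$ and $C_{\rm vol}$ equal to the volume of the Euclidean unit ball. This gives
\[
W_1\geq c_*N^{-q}\sum_ik_i^q=c_*N^{-q}(2N-K)\geq c_*N^{1-q}=c_*N^{-1/m}.
\]
Alternatively, convexity gives $\sum_ia_i^{q}\geq N^{-1/m}$ for any $N$-atomic measure, as in \Cref{cor:Wasserstein-lower}.

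The only mildly delicate step is the exponent bookkeeping that makes the heavy atom's transport cost $(K/N)\cdot(1/r)$ land exactly at $N^{-1/m}$. That is what dictates the choice $N=r^{m(m+1)}$ and the block side $1/r$, so that $K=N^{m/(m+1)}$ is an exact integer block count. Everything else is routine.
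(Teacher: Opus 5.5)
Your proof is correct and follows the paper's construction essentially verbatim: cells of side $1/n$ with $n=r^{m+1}$, one block of $K=r^{m^2}$ cells of side $1/r$ collapsed to its centre, the explicit cellwise coupling for the upper bound, and \Cref{prop:metric-atomic-transport} for the lower bound. The differences are cosmetic. You use an aligned block, whereas the paper offsets it to $\prod_\ell[1/n,(r^m+1)/n)$ so that its closure lies in $(0,1)^m$; this changes nothing, since flat distance never exceeds Euclidean distance in a fundamental cube. You also feed the full moment $\sum_i k_i^q\geq N$ into the atomic bound, where the paper uses only the heavy atom of mass $K/N=r^{-m}$.
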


\begin{proof}
We construct the configuration explicitly: partition the torus into $N$ equal
cubes, collapse a block of $K$ of them onto a single point, and send every
remaining cube to its own center.  The block is what creates the heavy atom, and
its side length is chosen so that the transport cost stays at the optimal order
while the atom is as large as that order permits.

Put $n=r^{m+1}$, so $n^m=N$, and partition the standard fundamental cube
$[0,1)^m$ into the half-open cells
\[
 C_a=\prod_{\ell=1}^m
 \left[\frac{a_\ell}{n},\frac{a_\ell+1}{n}\right),
 \qquad a_\ell\in\{0,\ldots,n-1\}.
\]
Their boundaries are null and may be assigned arbitrarily.  The block
\[
 Q=\prod_{\ell=1}^m
 \left[\frac1n,\frac{r^m+1}{n}\right)
\]
is the union of
$K=(r^m)^m=r^{m^2}=N^{m/(m+1)}$ cells and has side length $r^{-1}$.
Since $r\geq2$, its closure lies in $(0,1)^m$, so no wrap-around ambiguity
occurs.  Let $x_Q$ be the Euclidean center of $Q$.  Send the mass of every
cell contained in $Q$ to $x_Q$, and send every other cell to its own
center $c_a$.  Equivalently,
\[
 \nu_N=\frac1N\left(K\delta_{x_Q}
       +\sum_{C_a\not\subset Q}\delta_{c_a}\right).
\]
No center outside $Q$ equals $x_Q$, so the stated multiplicities are exact.

Let $\mathcal T_N:\mathbb T^m\to\mathbb T^m$ be this cellwise transport map, so
that the coupling $(\operatorname{Id},\mathcal T_N)_\#\mathfrak m$ has second
marginal $\nu_N$.  Since the flat distance is no larger than Euclidean distance in
the chosen fundamental cube, cells outside $Q$ contribute at most
$\sqrt m/(2n)$ to the transport cost, while the block contributes at most
\[
 \mathfrak m(Q)\frac{\sqrt m}{2r}
 =\frac{\sqrt m}{2}r^{-(m+1)}.
\]
Since $n^{-1}=r^{-(m+1)}=N^{-1/m}$, this proves the upper bound in
\eqref{eq:flat-torus-sharp-transport}.  Conversely, the atom at $x_Q$ has
mass
\[
 a=\frac KN=r^{-m}=N^{-1/(m+1)}.
\]
Applying \eqref{eq:metric-atomic-transport} to this atom gives
\[
 W_1(\nu_N,\mathfrak m)
 \geq c_ma^{1+1/m}
 =c_mr^{-(m+1)}=c_mN^{-1/m}.
\]
Finally, $K^q=N$, so
$\sum_i k_i^q=K^q+(N-K)=2N-K$.  The identities in
\eqref{eq:flat-torus-sharp-tails} follow from
$NK^{-q}=1$ and $K/N=K^{-1/m}$.
\end{proof}

\begin{remark}[Sharp exponent and the role of Green energy]
When $m$ is a positive integer, the power $1+1/m$ in
\eqref{eq:metric-atomic-transport} cannot be replaced by a smaller power of
the atomic mass under only an $m$-dimensional volume hypothesis.  Indeed,
on a flat $m$-torus, collapsing a ball of mass $a$ to its center has
transport cost $O(a^{1+1/m})$, while
\eqref{eq:metric-atomic-transport} supplies the matching lower order.  In
particular, the exponent is sharp for the surface application $m=2$.
On the surface, \Cref{cor:metric-multiplicity-profile} gives
$\sum_i k_i^{3/2}=O(N)$, linearly many distinct sites, and maximal
multiplicity $O(N^{2/3})$.
The next argument improves this to $O(\sqrt N)$ by using the stronger
second-order distributional estimate \eqref{eq:energy-discrepancy}, not
merely its $W_1$ consequence.
By itself the moment estimate does not imply bounded multiplicity or that all
but $o(N)$ listed entries are simple: a fixed positive density of double
sites is compatible with \eqref{eq:metric-multiplicity-moment}.
The flat-torus construction proves sharpness only under the optimal-order
$W_1$ hypothesis.  It is not a bounded-Green-energy construction.  For
$m=2$ its multiplicity $N^{2/3}$ is excluded by the stronger
energy-discrepancy bound $O(N^{1/2})$ proved below.
\end{remark}

We now replace the H\"older exponent by an arbitrary concave modulus.  This
subsumes the H\"older statement, so we state the power-modulus case first, prove
the general theorem, and then read the corollary off it.

\begin{corollary}[Sharp H\"older quadrature from energy alone]
\label{cor:energy-holder-discrepancy}
Under the hypotheses of \Cref{cor:Wasserstein}, for every
$0<\alpha\leq1$,
\begin{equation}\label{eq:energy-holder-discrepancy}
 c_XN^{-\alpha/2}
 \leq\mathcal D_\alpha(\nu_N,\mu)
 \leq C_{X,K,\beta}N^{-\alpha/2}.
\end{equation}
The constants may be chosen uniformly for $0<\alpha\leq1$.
\end{corollary}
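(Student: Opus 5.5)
The plan has two halves: the upper bound comes from heat smoothing of Hölder test functions, and the lower bound from the atomic transport estimate. Both are stated uniformly in $\alpha$.

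\emph{Upper bound.} We repeat the heat-smoothing argument of \Cref{cor:Wasserstein} with a Hölder test function $\phi$ satisfying $[\phi]_{C^{0,\alpha}(g)}\le 1$. Work first with the distance of $g_\mu$; bi-Lipschitz equivalence changes $[\phi]$ by at most a factor $\max(L,1)$, uniformly in $\alpha\le1$. Put $\phi_t=e^{-t\Delta_\mu}\phi$. The two representations used in that proof,
\[
\phi_t(x)-\phi(x)=\int_X H_t(x,y)(\phi(y)-\phi(x))\,d\mu(y)
\]
and
\[
\nabla^2\phi_t(x)=\int_X\nabla_x^2H_t(x,y)(\phi(y)-\phi(x))\,d\mu(y),
\]
now carry the weight $d(x,y)^\alpha$ instead of $d(x,y)$. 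The Gaussian bounds $|H_t|\lesssim t^{-1}e^{-cd^2/t}$ and $|\nabla_x^2H_t|\lesssim t^{-2}e^{-cd^2/t}$ for $0<t\le1$ then give
\[
\|\phi-\phi_t\|_\infty\le C t^{\alpha/2},\qquad \|\ddc\phi_t\|_\infty\le Ct^{\alpha/2-1}.
\]
The constants are uniform because the moments $\int_0^\infty s^{\alpha}e^{-cs^2}s\,ds$ are bounded for $\alpha\in(0,1]$.

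Next apply \eqref{eq:energy-discrepancy} to $\phi_t$:
\[
\Bigl|\int\phi\,d(\nu_N-\mu)\Bigr|\le 2Ct^{\alpha/2}+\frac{C'}{N}t^{\alpha/2-1}.
\]
Choosing $t=N^{-1}$ makes both terms of size $N^{-\alpha/2}$, with constants independent of $\alpha$. Taking the supremum over $\phi$ gives the upper bound.

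\emph{Lower bound.} This holds for every $N$-atomic probability measure, in the spirit of \Cref{cor:Wasserstein-lower}. Let $\nu_N$ have atoms $x_i$ with masses $a_i$, set $r_i=c_0a_i^{1/2}$ as in the proof of \Cref{prop:metric-atomic-transport}, and take the test function
\[
f(y)=\min_i \min\{d(y,x_i),r_i\}^\alpha .
\]
It suffices to show that $[f]_{C^{0,\alpha}}\le1$ and that $\int f\,d(\mu-\nu_N)\gtrsim N^{-\alpha/2}$.

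For the first point, note that $t\mapsto\min(t,r)^\alpha$ is $\alpha$-Hölder with constant $1$ by concavity ($|s^\alpha-t^\alpha|\le|s-t|^\alpha$). Each $y\mapsto\min(d(y,x_i),r_i)^\alpha$ is therefore $\alpha$-Hölder with constant $1$, and a minimum of such functions keeps the bound.

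For the second point, $f$ vanishes on the atoms, so $\int f\,d\nu_N=0$. The set where $f$ is small is controlled by the volume bound: $\{f<s^\alpha\}\subset\bigcup_i B(x_i,\min(s,r_i))$. The simplest route takes all radii equal, $r=c N^{-1/2}$, and $f=\min(d(\cdot,\operatorname{supp}\nu_N),r)^\alpha$. Then $\mu\bigl(\bigcup_i B(x_i,r/2)\bigr)\le N C_{\rm vol}(r/2)^2\le\tfrac12$ once $c$ is small. So $f\ge (r/2)^\alpha$ on a set of $\mu$-mass at least $\tfrac12$, and
\[
\int f\,d\mu\ge\tfrac12 (r/2)^\alpha\ge c_X N^{-\alpha/2},
\]
uniformly in $\alpha\in(0,1]$ since $(c/2)^\alpha\ge\min(1,c/2)$. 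A small-$N$ caveat requires $r\le r_0$, which is handled by shrinking $c$. Finally, one passes from $g_\mu$-distances back to $g$-distances by bi-Lipschitz equivalence.

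\emph{Main obstacle.} The only delicate point is the Hölder version of the second-derivative heat bound: I must check that the cancellation $\int\nabla_x^2H_t\,d\mu(y)=0$ still works with only Hölder regularity of $\phi$, yielding $t^{\alpha/2-1}$ with a constant uniform as $\alpha\to0$. Since $\int|\nabla^2_xH_t|\,d(x,y)^\alpha\,d\mu(y)\lesssim t^{-1}t^{\alpha/2}\sup_{\alpha}\Gamma$-type factors, I expect this to go through directly. Alternatively, the whole statement can be read off the concave-modulus theorem \Cref{thm:energy-concave-modulus} with $\omega(t)=t^\alpha$.
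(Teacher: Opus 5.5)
Your proof is correct. The lower bound is the paper's argument, namely the case $\omega(t)=t^\alpha$ of \Cref{thm:energy-concave-modulus}: test against $d_g(\cdot,S)^\alpha$ and use that the union of small balls around the at most $N$ support points has $\mu$-mass at most $1/2$. Truncating at $r$ and using half-radius balls changes nothing, and the weighted radii $r_i=c_0a_i^{1/2}$ you mention first are not needed.

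The upper bound takes a genuinely different route. The paper never smooths a H\"older function. Since $d_g^\alpha$ is a metric, Kantorovich--Rubinstein duality identifies $\mathcal D_\alpha$ with the $W_1$ cost for $d_g^\alpha$. Evaluating that cost on a $d_g$-optimal coupling and applying Jensen gives $\mathcal D_\alpha(\nu_N,\mu)\le W_1(\nu_N,\mu)^\alpha\le C_0^\alpha N^{-\alpha/2}\le\max\{1,C_0\}N^{-\alpha/2}$. You instead rerun heat smoothing with H\"older weights and balance $t^{\alpha/2}$ against $N^{-1}t^{\alpha/2-1}$ at $t=N^{-1}$. The step you flag as the main obstacle is harmless: $d^\alpha\le t^{\alpha/2}(1+d/\sqrt t)$ reduces the weighted Gaussian moments of $H_t$ and $\nabla_x^2H_t$ to the zeroth and first moments already used for \Cref{cor:Wasserstein}, uniformly in $\alpha\in(0,1]$.

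The paper's route buys softness. Once the $W_1$ rate is known, no further analysis is needed, and the same two lines work for every concave modulus and on abstract metric measure spaces (\Cref{prop:abstract-modulus-quantization}). This shows the H\"older rate is a pure consequence of the $W_1$ rate. Your route bounds each test function in the supremum defining $\mathcal D_\alpha$ directly, so it needs neither duality nor optimal couplings and goes straight from \eqref{eq:energy-discrepancy}. The price is a second appeal to the Gaussian derivative bounds, and being tied to the heat-kernel structure of the smooth surface.
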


\begin{proof}
This is \Cref{thm:energy-concave-modulus} specialized to $\omega(t)=t^\alpha$,
together with the observation that a snowflaked metric turns the H\"older
discrepancy into a transport distance.  Nothing in the proof of that theorem uses
the present corollary, so the order of presentation is not circular.

Take $d=d_g$, the fixed ground distance used in the definition of
$\mathcal D_\alpha$, and compute $W_1$ with this same distance.  Since
$d^\alpha$ is a metric,
Kantorovich--Rubinstein duality identifies $\mathcal D_\alpha(\nu_N,\mu)$
with the $W_1$ transport cost for the snowflaked distance $d^\alpha$.
Let $\pi$ be an optimal coupling for the original distance $d$.  Concavity
of $t\mapsto t^\alpha$ and Jensen's inequality give
\[
 \begin{aligned}
 \mathcal D_\alpha(\nu_N,\mu)
 &\leq \int_{X\times X}d(x,y)^\alpha\,d\pi(x,y)\\
 &\leq\left(\int_{X\times X}d(x,y)\,d\pi(x,y)\right)^\alpha
 =W_1(\nu_N,\mu)^\alpha.
 \end{aligned}
\]
The upper bound follows from \eqref{eq:Wasserstein-bound}; if its constant
is $C_0$, then $C_0^\alpha\leq\max\{1,C_0\}$, so the constant is uniform in
$\alpha$.  The general lower-bound assertion of
\Cref{thm:energy-concave-modulus}, applied to the concave modulus
$\omega(t)=t^\alpha$, holds for every probability measure supported on at most
$N$ points and supplies the other inequality, also uniformly in $\alpha$.
\end{proof}

\begin{theorem}[Sharp discrepancy for concave moduli]
\label{thm:energy-concave-modulus}
Let $\omega:[0,\infty)\to[0,\infty)$ be continuous, nondecreasing, and
concave, with $\omega(0)=0$ and $\omega(t)>0$ for $t>0$.  Define
\[
 \mathcal D_\omega(\nu,\mu)
 :=\sup_{\substack{f\in C(X;\mathbb R)\\
          |f(x)-f(y)|\leq\omega(d_g(x,y))\ \text{for all }x,y}}
       \left|\int_Xf\,d(\nu-\mu)\right|.
\]
Under the hypotheses of \Cref{cor:Wasserstein},
\begin{equation}\label{eq:energy-concave-modulus}
 c_X\omega(N^{-1/2})
 \leq\mathcal D_\omega(\nu_N,\mu)
 \leq C_{X,K,\beta}\omega(N^{-1/2}).
\end{equation}
The constants may be chosen independently of $\omega$.  More generally,
the lower bound holds for every probability measure supported on at most
$N$ points.
\end{theorem}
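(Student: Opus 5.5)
The upper bound reduces to the $W_1$ estimate through a concave transport cost; the lower bound is an atomic argument in the spirit of \Cref{prop:metric-atomic-transport}.

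\emph{Upper bound.} First, $\omega\circ d_g$ is a metric: concavity with $\omega(0)=0$ gives subadditivity, and $\omega$ is nondecreasing with $\omega(t)>0$ for $t>0$. Kantorovich--Rubinstein duality then identifies $\mathcal D_\omega(\nu_N,\mu)$ with the transport cost for the ground cost $\omega(d_g)$. Alternatively, for any admissible $f$ and any coupling $\pi$ we have $|\int f\,d(\nu_N-\mu)|\le\int\omega(d_g)\,d\pi$. Take $\pi$ optimal for $W_1$ with distance $d_g$. Jensen's inequality for concave $\omega$ gives
\[
\int\omega(d_g)\,d\pi\le\omega\Bigl(\int d_g\,d\pi\Bigr)=\omega\bigl(W_1(\nu_N,\mu)\bigr)\le\omega\bigl(C_0N^{-1/2}\bigr),
\]
using \Cref{cor:Wasserstein}. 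Concavity with $\omega(0)=0$ implies $\omega(\lambda t)\le\max\{1,\lambda\}\,\omega(t)$, so the bound is at most $\max\{1,C_0\}\,\omega(N^{-1/2})$. This constant does not depend on $\omega$.

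\emph{Lower bound.} Let $\nu$ be supported on points $x_1,\dots,x_M$ with $M\le N$. I build a test function $f(x)=\omega\bigl(\min\{d_g(x,\{x_i\}),\,r\}\bigr)$ with $r=\kappa N^{-1/2}$, where $\kappa$ is a small fixed constant. It is admissible because $x\mapsto\min\{d_g(x,S),r\}$ is $1$-Lipschitz and $\omega$ is nondecreasing and subadditive, so $|f(x)-f(y)|\le\omega(|d-d'|)\le\omega(d_g(x,y))$. Next, $\int f\,d\nu=0$, while $\int f\,d\mu\ge\omega(r)\,\mu\{x:d_g(x,\{x_i\})\ge r\}$. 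The union of the balls $B(x_i,r)$ has $\mu$-measure at most $NC_{\rm vol}r^2=C_{\rm vol}\kappa^2\le1/2$ once $\kappa$ is chosen suitably. This requires $r\le r_0$, which holds for $N\ge N_0$. Hence $\mathcal D_\omega\ge\tfrac12\omega(\kappa N^{-1/2})\ge\tfrac{\kappa}{2}\omega(N^{-1/2})$ for $\kappa\le1$, again by concavity. For the finitely many small $N$, I replace $r$ by $\min\{r,r_1\}$ with a fixed $r_1$ and use the same comparison $\omega(\lambda t)\ge\lambda\omega(t)$ for $\lambda\le1$. This keeps $c_X$ independent of $\omega$, and the argument applies to every probability measure with at most $N$ atoms, as the statement requires.

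The main obstacle is keeping all constants independent of $\omega$. This is exactly why only the two scaling consequences of concavity are used: $\omega(\lambda t)\le\lambda\omega(t)$ for $\lambda\ge1$ and $\omega(\lambda t)\ge\lambda\omega(t)$ for $\lambda\le1$, together with $N$-independent volume bounds for small $g$-balls on $X$. A second point is justifying the Jensen step when $\omega$ is merely continuous, but this is immediate for finite couplings and probability measures.
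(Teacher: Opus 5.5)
Your proposal is correct and follows the paper's proof essentially step for step. For the upper bound you apply Jensen through an optimal $d_g$-coupling and then use $\omega(\lambda t)\le\max\{1,\lambda\}\,\omega(t)$. For the lower bound you test against $\omega(d_g(\cdot,S))$, bound the union of the small balls around $S$ by half the measure, and use $\omega(\lambda t)\ge\min\{1,\lambda\}\,\omega(t)$.

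The differences are cosmetic. Truncating the test function at $r$ is unnecessary. The paper handles small $N$ by enlarging $C_{\rm vol}$ so that $r_N=(2C_{\rm vol}N)^{-1/2}\le r_*$ for every $N\ge1$, which is equivalent to your $\min\{r,r_1\}$ device.
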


\begin{proof}
Concavity and $\omega(0)=0$ imply that $t\mapsto\omega(t)/t$ is
nonincreasing on $(0,\infty)$.  The subadditivity cases with $a=0$ or $b=0$
are immediate.  If $a\geq b>0$, the monotonicity of this quotient gives
\[
 \omega(a+b)
 \leq\left(1+\frac ba\right)\omega(a)
 \leq\omega(a)+\omega(b).
\]
Thus $\omega$ is subadditive, and consequently
\begin{equation}\label{eq:concave-modulus-difference}
 |\omega(a)-\omega(b)|\leq\omega(|a-b|)
 \qquad(a,b\geq0).
\end{equation}
It follows that
$d_\omega(x,y):=\omega(d_g(x,y))$ is a metric.  By
Kantorovich--Rubinstein duality, $\mathcal D_\omega$ is exactly the
$W_1$ transport cost for $d_\omega$:
\[
 \mathcal D_\omega(\nu,\mu)
 =\inf_{\pi\in\Pi(\nu,\mu)}
   \int_{X\times X}d_\omega(x,y)\,d\pi(x,y).
\]

Let $\pi$ be an optimal coupling for the original distance $d_g$.
Jensen's inequality for the concave function $\omega$ gives
\[
 \begin{aligned}
 \mathcal D_\omega(\nu_N,\mu)
 &\leq\int_{X\times X}\omega(d_g(x,y))\,d\pi(x,y)\\
 &\leq\omega\!\left(\int_{X\times X}d_g(x,y)\,d\pi(x,y)\right)
 =\omega\bigl(W_1(\nu_N,\mu)\bigr).
 \end{aligned}
\]
If $C_0$ is the constant in \eqref{eq:Wasserstein-bound}, monotonicity,
concavity, and $\omega(0)=0$ imply
\[
 \omega(C_0t)\leq\max\{1,C_0\}\omega(t).
\]
This proves the upper bound in \eqref{eq:energy-concave-modulus} with a
constant independent of $\omega$.

For the lower bound, let $S=\operatorname{supp}\nu$ with $\#S\leq N$.
Choose $C_{\rm vol}<\infty$ and $r_*>0$ such that
$\mu(B_g(x,r))\leq C_{\rm vol}r^2$ for $x\in X$ and $0<r\leq r_*$.
After enlarging $C_{\rm vol}$, assume
$(2C_{\rm vol})^{-1/2}\leq r_*$, and put
\[
 r_N=(2C_{\rm vol}N)^{-1/2}=c_0N^{-1/2}.
\]
The complement of the union of the $r_N$-balls around $S$ has measure at
least $1/2$.  By \eqref{eq:concave-modulus-difference} and the
one-Lipschitz property of $d_g(\,\cdot\,,S)$, the function
\[
 \psi(x)=\omega(d_g(x,S))
\]
is one-Lipschitz for $d_\omega$ and vanishes on $S$.  Hence
\[
 \mathcal D_\omega(\nu,\mu)
 \geq\int_X\psi\,d\mu
 \geq\frac12\omega(c_0N^{-1/2}).
\]
Finally,
$\omega(c_0t)\geq\min\{1,c_0\}\omega(t)$: for $c_0\leq1$ this is
concavity, and for $c_0\geq1$ it is monotonicity.  This proves the lower
bound uniformly in $\omega$.
\end{proof}

\begin{proposition}[Metric-dimensional modulus quantization]
\label{prop:abstract-modulus-quantization}
Let $(Y,d,\mathfrak m)$ be a compact metric probability space, and suppose
that for some $m>0$, $C_{\rm vol}<\infty$, and $r_0>0$,
\begin{equation}\label{eq:abstract-small-ball-upper}
 \mathfrak m(B_d(y,r))\leq C_{\rm vol}r^m
 \qquad(y\in Y,\ 0<r\leq r_0).
\end{equation}
Let $\omega$ satisfy the hypotheses of
\Cref{thm:energy-concave-modulus}, and define
\[
 \mathcal D_{\omega,d}(\nu,\mathfrak m)
 :=\sup_{\substack{f\in C(Y;\mathbb R)\\
          |f(x)-f(y)|\leq\omega(d(x,y))\ \text{for all }x,y}}
       \left|\int_Yf\,d(\nu-\mathfrak m)\right|.
\]
Then every probability measure $\nu$ on $Y$ satisfies
\begin{equation}\label{eq:abstract-modulus-upper}
 \mathcal D_{\omega,d}(\nu,\mathfrak m)
 \leq\omega\bigl(W_1^d(\nu,\mathfrak m)\bigr).
\end{equation}
If $\#\operatorname{supp}\nu\leq N$ and
$r_N=(2C_{\rm vol}N)^{-1/m}\leq r_0$, then
\begin{equation}\label{eq:abstract-modulus-lower}
 \mathcal D_{\omega,d}(\nu,\mathfrak m)
 \geq\frac12\omega(r_N).
\end{equation}
Consequently, if
$W_1^d(\nu_N,\mathfrak m)\leq A N^{-1/m}$ and
$\#\operatorname{supp}\nu_N\leq N$, then, for all sufficiently large $N$,
\begin{equation}\label{eq:abstract-modulus-two-sided}
 c_{m,C_{\rm vol}}\omega(N^{-1/m})
 \leq\mathcal D_{\omega,d}(\nu_N,\mathfrak m)
 \leq C_A\omega(N^{-1/m}),
\end{equation}
with constants independent of $\omega$.
The threshold for ``sufficiently large'' may depend on $r_0$.
\end{proposition}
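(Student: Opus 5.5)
The argument is the metric-measure version of the proof of \Cref{thm:energy-concave-modulus}, with the surface replaced by $(Y,d,\mathfrak m)$ and the exponent $2$ replaced by $m$. It has three steps: a comparison with $W_1$ for the upper bound, a hole-counting argument for the lower bound, and a rescaling step that removes the dependence on $\omega$.

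\emph{Upper bound \eqref{eq:abstract-modulus-upper}.} First, concavity of $\omega$ with $\omega(0)=0$ makes $\omega(t)/t$ nonincreasing. Hence $\omega$ is subadditive, and $|\omega(a)-\omega(b)|\le\omega(|a-b|)$, exactly as in the proof of \Cref{thm:energy-concave-modulus}. In particular $d_\omega=\omega\circ d$ is a metric on $Y$. Every admissible $f$ in the definition of $\mathcal D_{\omega,d}$ then satisfies, for any coupling $\pi\in\Pi(\nu,\mathfrak m)$,
\[
\Bigl|\int f\,d(\nu-\mathfrak m)\Bigr|\le\int\omega(d(x,y))\,d\pi .
\]
Here no full duality theorem is needed, only this trivial direction. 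Take $\pi$ optimal for $W_1^d$; one exists by compactness, or use a minimizing sequence. Jensen's inequality for concave $\omega$ then bounds the right side by $\omega(W_1^d(\nu,\mathfrak m))$.

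\emph{Lower bound \eqref{eq:abstract-modulus-lower}.}
1. Let $S=\operatorname{supp}\nu$, so $\#S\le N$.
2. Since $r_N\le r_0$, the volume hypothesis gives $\mathfrak m\bigl(\bigcup_{s\in S}B(s,r_N)\bigr)\le N C_{\rm vol}r_N^m=1/2$. So the set where $d(\cdot,S)\ge r_N$ has $\mathfrak m$-measure at least $1/2$.
3. Test with $\psi=\omega(d(\cdot,S))$. It is continuous, and it is admissible because $d(\cdot,S)$ is $1$-Lipschitz and $\omega$ is monotone with the difference inequality above.
4. The function $\psi$ vanishes $\nu$-a.e., since $\nu$ lives on the finite set $S$, and $\psi\ge0$. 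Therefore $\mathcal D_{\omega,d}\ge\int\psi\,d\mathfrak m\ge\tfrac12\omega(r_N)$.

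\emph{Two-sided estimate \eqref{eq:abstract-modulus-two-sided}.} Combine the two bounds with the scaling inequalities for a concave, nondecreasing $\omega$ with $\omega(0)=0$:
\[
\omega(\lambda t)\le\max\{1,\lambda\}\,\omega(t),\qquad \omega(\lambda t)\ge\min\{1,\lambda\}\,\omega(t).
\]
For the upper side, monotonicity gives $\omega(W_1^d)\le\omega(AN^{-1/m})\le\max\{1,A\}\,\omega(N^{-1/m})$. For the lower side, write $r_N=(2C_{\rm vol})^{-1/m}N^{-1/m}$, which gives $\tfrac12\omega(r_N)\ge\tfrac12\min\{1,(2C_{\rm vol})^{-1/m}\}\,\omega(N^{-1/m})$. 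These constants depend only on $A$, $m$ and $C_{\rm vol}$, never on $\omega$.

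"Sufficiently large" means $N\ge(2C_{\rm vol})^{-1}r_0^{-m}$, which guarantees $r_N\le r_0$; this threshold depends on $r_0$, as the statement allows. The main point needing care is the lower bound: the hole argument needs $\psi$ to vanish on all of $\operatorname{supp}\nu$, which the finiteness of the support provides. Everything else is a routine transcription of the surface proof.
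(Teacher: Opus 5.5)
Your proposal is correct and follows the paper's proof essentially line for line. The paper uses the same three ingredients: Jensen applied along a $W_1^d$-optimal coupling for the upper bound, the test function $\omega(d(\cdot,S))$ on the complement of the $r_N$-balls (which has measure at least $\tfrac12$) for the lower bound, and the scaling inequalities $\min\{1,\lambda\}\omega(t)\le\omega(\lambda t)\le\max\{1,\lambda\}\omega(t)$ for the two-sided estimate. Your remarks that only the trivial direction of Kantorovich--Rubinstein duality is needed, and that the explicit threshold is $N\ge(2C_{\rm vol})^{-1}r_0^{-m}$, are useful precisions but do not change the argument.
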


\begin{proof}
Concavity makes $\omega\circ d$ a metric, so the modulus discrepancy is itself a
Kantorovich--Rubinstein distance; Jensen then transfers the transport bound
through $\omega$, and the lower bound is a direct test against
$\omega(d(\cdot,S))$.

By \eqref{eq:concave-modulus-difference},
$d_\omega=\omega\circ d$ is a metric.  If $\pi$ is an optimal coupling for
$d$, Kantorovich--Rubinstein duality and Jensen's inequality give
\[
 \mathcal D_{\omega,d}(\nu,\mathfrak m)
 \leq\int_{Y\times Y}\omega(d(x,y))\,d\pi
 \leq\omega\!\left(\int_{Y\times Y}d(x,y)\,d\pi\right),
\]
which is \eqref{eq:abstract-modulus-upper}.

For the lower bound, put $S=\operatorname{supp}\nu$.  By
\eqref{eq:abstract-small-ball-upper}, the union of the $r_N$-balls centered
at $S$ has $\mathfrak m$-measure at most $1/2$.  The function
$y\mapsto\omega(d(y,S))$ is one-Lipschitz for $d_\omega$, vanishes on $S$,
and is at least $\omega(r_N)$ outside that union.  Testing with this function
proves \eqref{eq:abstract-modulus-lower}.

Finally, write $r_N=c_0N^{-1/m}$ with
$c_0=(2C_{\rm vol})^{-1/m}$.  Concavity and monotonicity give
\[
 \omega(c_0t)\geq\min\{1,c_0\}\omega(t),
 \qquad
 \omega(At)\leq\max\{1,A\}\omega(t).
\]
Combining these inequalities with
\eqref{eq:abstract-modulus-upper} and
\eqref{eq:abstract-modulus-lower} proves
\eqref{eq:abstract-modulus-two-sided} uniformly in $\omega$.
\end{proof}

\section{Mesoscopic counting, multiplicity, and negative Sobolev norms}
\label{sec:mesoscopic}

The discrepancy estimate of \Cref{prop:energy-to-discrepancy} may be tested
against scaled bumps and against the heat kernel.  This gives two-sided point
counts down to the scale $N^{-1/4}$, uniform heat regularization, the
multiplicity bound which replaces separation under the energy hypothesis, and
negative Sobolev convergence.

\begin{proposition}[Two-sided mesoscopic counting from Green energy]
\label{prop:energy-mesoscopic-count}
Under the hypotheses of \Cref{cor:Wasserstein}, there are $r_*>0$ and
$C_{X,K,\beta}<\infty$ such that
\begin{align}
 N\mu(B_g(x,r/2))-C_{X,K,\beta}r^{-2}
 &\leq \#\bigl\{j:w_{j,N}\in B_g(x,r)\bigr\}
 \label{eq:energy-mesoscopic-count-lower}\\
 &\leq N\mu(B_g(x,2r))+C_{X,K,\beta}r^{-2}
 \label{eq:energy-mesoscopic-count-upper}
\end{align}
for every $x\in X$, every $0<r\leq r_*$, and every $N$.  Repeated entries
are counted with multiplicity.  In particular,
\begin{equation}\label{eq:energy-mesoscopic-count-volume}
 c_XNr^2-C_{X,K,\beta}r^{-2}
 \leq\#\bigl\{j:w_{j,N}\in B_g(x,r)\bigr\}
 \leq C_XNr^2+C_{X,K,\beta}r^{-2}.
\end{equation}
\end{proposition}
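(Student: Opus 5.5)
We will test the discrepancy estimate \eqref{eq:energy-discrepancy} against two smooth cutoffs, one squeezed inside $B_g(x,r)$ and one covering it. This is the same device as in the proof of \Cref{cor:energy-covering-radius}, but now counting points instead of excluding a hole.

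First we fix the bumps. As in \eqref{eq:covering-bump-estimates}, a finite atlas gives uniform $r_*>0$ and $C<\infty$ with the following property. For every $x\in X$ and $0<r\leq r_*$ there are smooth functions $0\leq\phi^-_{x,r}\leq\chi_{B_g(x,r)}$ and $\chi_{B_g(x,r)}\leq\phi^+_{x,r}\leq1$ such that:
\begin{itemize}
\item $\phi^-_{x,r}=1$ on $B_g(x,r/2)$;
\item $\operatorname{supp}\phi^+_{x,r}\subset B_g(x,2r)$;
\item $\|\ddc\phi^\pm_{x,r}\|_\infty\leq Cr^{-2}$.
\end{itemize}
These come from rescaling a fixed radial profile in normal (or chart) coordinates. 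The transition annuli $B(x,r)\setminus B(x,r/2)$ and $B(x,2r)\setminus B(x,r)$ have width comparable to $r$, so second derivatives are $O(r^{-2})$. Since the metric $g$ and the chart are fixed, $\ddc$ is a smooth second-order operator and inherits this bound. The constant $r_*$ is chosen below the injectivity/chart radius so that $B_g(x,2r)$ lies in one chart.

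Next we count. Writing $n(x,r)=\#\{j:w_{j,N}\in B_g(x,r)\}$, we have $N\int\phi^-\,d\nu_{W_N}\leq n(x,r)\leq N\int\phi^+\,d\nu_{W_N}$, with multiplicities counted automatically. By \eqref{eq:energy-discrepancy},
\[
 \Bigl|N\int\phi^\pm\,d\nu_{W_N}-N\int\phi^\pm\,d\mu\Bigr|\leq\frac{2(1+\log K)}{\beta}Cr^{-2}.
\]
Combining this with $\int\phi^-d\mu\geq\mu(B_g(x,r/2))$ and $\int\phi^+d\mu\leq\mu(B_g(x,2r))$ gives \eqref{eq:energy-mesoscopic-count-lower} and \eqref{eq:energy-mesoscopic-count-upper} with $C_{X,K,\beta}=2C(1+\log K)/\beta$. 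This holds for every $N$ with no largeness assumption.

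Finally, \eqref{eq:energy-mesoscopic-count-volume} follows from the two-sided volume comparison $c\rho^2\leq\mu(B_g(x,\rho))\leq C\rho^2$ for $\rho\leq2r_*$. This comparison holds because $\mu$ is a smooth positive form and $g$ is a smooth metric on a compact surface; shrinking $r_*$ if necessary makes it available on the relevant range, and it absorbs the factors $1/2$ and $2$ into $c_X$ and $C_X$.

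The only mildly delicate step is the uniformity of the bump construction, that is, $\|\ddc\phi^\pm\|_\infty\leq Cr^{-2}$ with $C$ independent of $x$. We handle it exactly as in the covering-radius corollary: pass to relatively compact subcharts of a fixed finite atlas, where the metric coefficients and their derivatives are uniformly bounded. There is no real obstacle beyond this.
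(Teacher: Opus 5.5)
Your proof is correct and is essentially the paper's own argument: inner and outer scaled cutoffs $\phi^\pm_{x,r}$ with $\|\ddc\phi^\pm_{x,r}\|_\infty\le Cr^{-2}$ are tested against \eqref{eq:energy-discrepancy}, the result is multiplied by $N$, and small-ball volume comparison then gives \eqref{eq:energy-mesoscopic-count-volume}. One small point: of the two options you offer for the bump profile, normal coordinates (equivalently, a profile in $d_g(x,\cdot)$) is the one that yields the exact radii $r/2$, $r$, $2r$ directly, whereas an arbitrary chart would first need to be adapted to $g$ at $x$.
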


\begin{proof}
After decreasing $r_*$ once, choose standard inner and outer scaled cutoffs
$\phi^-_{x,r}$ and $\phi^+_{x,r}$ such that
\[
 0\leq\phi^-_{x,r}\leq\mathbf1_{B_g(x,r)},
 \qquad
 \phi^-_{x,r}=1\ \text{on }B_g(x,r/2),
 \qquad
 \mathbf1_{B_g(x,r)}\leq\phi^+_{x,r}\leq
 \mathbf1_{B_g(x,2r)},
\]
and
\[
 \|\ddc\phi^-_{x,r}\|_\infty
 +\|\ddc\phi^+_{x,r}\|_\infty\leq C_Xr^{-2}.
\]
Such cutoffs exist uniformly for $0<r\leq r_*$ by the scaled bump
construction already used in \eqref{eq:covering-bump-estimates}.  The inner
cutoff gives
\[
 \begin{aligned}
 \nu_N(B_g(x,r))
 &\geq\int_X\phi^-_{x,r}\,d\nu_N\\
 &\geq\int_X\phi^-_{x,r}\,d\mu
   -\left|\int_X\phi^-_{x,r}\,d(\nu_N-\mu)\right|\\
 &\geq\mu(B_g(x,r/2))-\frac{C_{X,K,\beta}}{Nr^2},
 \end{aligned}
\]
where the last line uses \eqref{eq:energy-discrepancy}.  Similarly, the
outer cutoff gives
\[
 \begin{aligned}
 \nu_N(B_g(x,r))
 &\leq\int_X\phi^+_{x,r}\,d\nu_N\\
 &\leq\int_X\phi^+_{x,r}\,d\mu
   +\left|\int_X\phi^+_{x,r}\,d(\nu_N-\mu)\right|\\
 &\leq\mu(B_g(x,2r))+\frac{C_{X,K,\beta}}{Nr^2}.
 \end{aligned}
\]
Multiplying by $N$ proves
\eqref{eq:energy-mesoscopic-count-lower} and
\eqref{eq:energy-mesoscopic-count-upper}.
Uniform small-ball volume comparison gives
\eqref{eq:energy-mesoscopic-count-volume}.
\end{proof}

\begin{corollary}[Uniform density above the discrepancy scale]
\label{cor:energy-mesoscopic-density}
There is $A_{X,K,\beta}<\infty$ such that, whenever
\[
 A_{X,K,\beta}N^{-1/4}\leq r\leq r_*,
\]
one has
\begin{equation}\label{eq:energy-mesoscopic-density}
 c_{X,K,\beta}Nr^2
 \leq\#\bigl\{j:w_{j,N}\in B_g(x,r)\bigr\}
 \leq C_{X,K,\beta}Nr^2
\end{equation}
for every $x\in X$.
\end{corollary}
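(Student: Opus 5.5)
The plan is to read \Cref{cor:energy-mesoscopic-density} directly off the two-sided counts \eqref{eq:energy-mesoscopic-count-volume} of \Cref{prop:energy-mesoscopic-count}. The only work is to absorb the error term $C_{X,K,\beta}r^{-2}$ into the main term $Nr^2$. This absorption is possible exactly when $Nr^4$ is large, that is, when $r\gtrsim N^{-1/4}$.

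For the lower bound, start from
\[
 \#\{j:w_{j,N}\in B_g(x,r)\}\geq c_XNr^2-C_{X,K,\beta}r^{-2}.
\]
Choose $A_{X,K,\beta}$ so large that $A^4\geq 2C_{X,K,\beta}/c_X$. Then every $r\geq AN^{-1/4}$ satisfies $Nr^4\geq A^4$, and hence
\[
 C_{X,K,\beta}r^{-2}=\frac{C_{X,K,\beta}}{Nr^4}\,Nr^2\leq\frac{c_X}{2}Nr^2 .
\]
The count is therefore at least $\tfrac{c_X}{2}Nr^2$, which is the lower bound in \eqref{eq:energy-mesoscopic-density} with constant $c_X/2$.

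For the upper bound, the same inequality $Nr^4\geq A^4$ gives
\[
 C_{X,K,\beta}r^{-2}\leq \frac{C_{X,K,\beta}}{A^4}\,Nr^2\leq \frac{c_X}{2}Nr^2 .
\]
The count is then at most $(C_X+c_X/2)Nr^2$.

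The upper restriction $r\leq r_*$ is needed because \Cref{prop:energy-mesoscopic-count} and the small-ball volume comparison $c_Xr^2\leq\mu(B_g(x,r/2))$, $\mu(B_g(x,2r))\leq C_Xr^2$ are only valid for $0<r\leq r_*$.

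If $AN^{-1/4}>r_*$, the range of admissible $r$ is empty and there is nothing to prove. No step is a genuine obstacle. The only point to check is that the constant $A$ depends on $(X,K,\beta)$ but not on $N$ or $x$, and this holds because the constants in \eqref{eq:energy-mesoscopic-count-volume} are uniform in both.
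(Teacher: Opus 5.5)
Your proposal is correct and is essentially the paper's own proof. You choose $A_{X,K,\beta}$ with $A^4\geq 2C_{X,K,\beta}/c_X$, so that for $r\geq AN^{-1/4}$ the error term $C_{X,K,\beta}r^{-2}$ in \eqref{eq:energy-mesoscopic-count-volume} is at most half of the lower main term $c_XNr^2$; this gives the lower bound, and the same comparison absorbs the error into the upper main term.
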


\begin{proof}
Choose $A_{X,K,\beta}$ so large that, whenever
\[
 r\geq A_{X,K,\beta}N^{-1/4},
\]
the error term
$C_{X,K,\beta}r^{-2}$ in
\eqref{eq:energy-mesoscopic-count-volume} is at most half of its lower main
term $c_XNr^2$.  The lower bound follows, and the same comparison absorbs
the upper error term into $C_XNr^2$.
\end{proof}

\begin{corollary}[Mesoscopic heat regularization]
\label{cor:energy-heat-regularization}
Let $H_t(x,y)$ be the kernel, with respect to $\mu$, of
$e^{-t\Delta_{g_\mu}}$ for a fixed conformal metric $g_\mu$ whose volume
form is $\mu$, and define the smooth density
\[
 \rho_{N,t}(x):=\int_XH_t(x,y)\,d\nu_N(y).
\]
Under the hypotheses of \Cref{cor:Wasserstein}, there are $t_0>0$ and
$C_{X,K,\beta}<\infty$ such that
\begin{equation}\label{eq:energy-heat-uniform-error}
 \|\rho_{N,t}-1\|_{L^\infty(X)}
 \leq\frac{C_{X,K,\beta}}{Nt^2}
 \qquad(0<t\leq t_0).
\end{equation}
Consequently, for some $A^{\rm heat}_{X,K,\beta}<\infty$ and every
sufficiently large $N$, the interval below is nonempty and
\begin{equation}\label{eq:energy-heat-positive-density}
 \frac12\leq\rho_{N,t}(x)\leq\frac32
\end{equation}
for every $x\in X$ whenever
$A^{\rm heat}_{X,K,\beta}N^{-1/2}\leq t\leq t_0$.
\end{corollary}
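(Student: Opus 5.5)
The plan is to test the discrepancy estimate \eqref{eq:energy-discrepancy} against the heat kernel itself.

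\emph{Step 1: reduce to a discrepancy.} Fix $x\in X$ and put $\phi_x:=H_t(x,\cdot)$, which is smooth for $t>0$. Stochastic completeness gives $\int_XH_t(x,y)\,d\mu(y)=1$, so
\[
 \rho_{N,t}(x)-1=\int_X\phi_x\,d(\nu_N-\mu).
\]
By \eqref{eq:energy-discrepancy},
\[
 |\rho_{N,t}(x)-1|\leq\frac{2(1+\log K)}{\beta N}\,\|\ddc_y H_t(x,\cdot)\|_\infty .
\]

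\emph{Step 2: bound the second derivatives of the kernel.} With $\ddc f=-(2\pi)^{-1}(\Delta f)\mu$ and $\Delta_y H_t(x,y)=-\partial_tH_t(x,y)$, we have
\[
 \|\ddc_yH_t(x,\cdot)\|_\infty\asymp\sup_y|\partial_tH_t(x,y)|.
\]
The standard compact-surface bounds give $H_t\leq Ct^{-1}$ and $|\partial_tH_t|\leq Ct^{-2}$ for $0<t\leq t_0$; see \cite[Chs.~15--16]{Grigoryan}. The second bound comes from the Gaussian estimate for time derivatives, or from the semigroup identity $\partial_tH_t=\Delta H_{t/2}*H_{t/2}$ together with analyticity of the semigroup and the $L^1\to L^\infty$ bound $Ct^{-1}$ in real dimension two. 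Combining this with Step 1 yields \eqref{eq:energy-heat-uniform-error}, with a constant depending only on $(X,\mu)$ and on $(1+\log K)/\beta$.

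\emph{Step 3: positivity.} For \eqref{eq:energy-heat-positive-density} we need $C_{X,K,\beta}/(Nt^2)\leq\tfrac12$, that is, $t\geq(2C_{X,K,\beta})^{1/2}N^{-1/2}$. So we take
\[
 A^{\rm heat}_{X,K,\beta}=(2C_{X,K,\beta})^{1/2}.
\]
The interval $[A^{\rm heat}N^{-1/2},t_0]$ is nonempty once $N\geq(A^{\rm heat}/t_0)^2$.

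\emph{Main obstacle.} The only nonroutine input is the uniform bound $\sup_{x,y}|\partial_tH_t(x,y)|\leq Ct^{-2}$ for small $t$. I would justify it by citing the on-diagonal time-derivative estimate on compact manifolds. A direct alternative is to write $\partial_tH_t(x,y)=-\int\Delta_zH_{t/2}(x,z)H_{t/2}(z,y)\,d\mu(z)$ and use $\|\Delta H_{t/2}(x,\cdot)\|_{L^2}\leq Ct^{-1}\|H_{t/4}(x,\cdot)\|_{L^2}\leq Ct^{-3/2}$ (spectral calculus) together with $\|H_{t/2}(\cdot,y)\|_{L^2}\leq Ct^{-1/2}$, which gives $Ct^{-2}$. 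This $L^2$ route needs only the on-diagonal bound $H_s(x,x)\leq Cs^{-1}$.
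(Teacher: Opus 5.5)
Your proof is correct. Its skeleton matches the paper's: test \eqref{eq:energy-discrepancy} against $y\mapsto H_t(x,y)$, use $\int_XH_t(x,y)\,d\mu(y)=1$, then take $A^{\rm heat}$ with $C(A^{\rm heat})^{-2}\le\tfrac12$.

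The genuine difference is how you bound $\|\ddc_yH_t(x,\cdot)\|_\infty$.

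\emph{The paper's route.} It invokes the Gaussian bound for two spatial derivatives, $|\nabla_y^2H_t(x,y)|\le Ct^{-2}e^{-d(x,y)^2/(Ct)}$. It then compares $\ddc_y$ with the $y$-Hessian in the fixed metric.

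\emph{Your route.} You use that the semigroup is generated by the Laplacian of $g_\mu$. This is exactly the operator in $\ddc f=-(2\pi)^{-1}(\Delta f)\mu$. Combined with the heat equation in $y$, this gives the exact identity $\ddc_yH_t(x,\cdot)=(2\pi)^{-1}\,\partial_tH_t(x,\cdot)\,\mu$. The problem therefore reduces to a time-derivative bound. Your $L^2$ argument for $\sup_{x,y}|\partial_tH_t|\le Ct^{-2}$ is sound: it uses $\|H_{t/2}(\cdot,y)\|_{L^2}^2=H_t(y,y)\le Ct^{-1}$, $\|\Delta e^{-(t/4)\Delta}\|_{L^2\to L^2}\le 4/(et)$, and $\|H_{t/4}(x,\cdot)\|_{L^2}^2=H_{t/2}(x,x)\le Ct^{-1}$.

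\emph{What each buys.} Yours needs less input. It uses only the on-diagonal upper bound $H_s(x,x)\le Cs^{-1}$ and spectral calculus, and avoids spatial derivative estimates for the kernel, which are the less elementary part of heat-kernel theory on manifolds. The paper's Hessian route does not rely on the semigroup's generator coinciding with the operator hidden in $\ddc$. It would therefore apply unchanged to heat kernels of metrics not conformal to the complex structure, where your exact identity is unavailable. It also yields off-diagonal Gaussian decay, though that is not needed for this corollary.
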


\begin{proof}
The heat kernel preserves constants, so
$\int_XH_t(x,y)\,d\mu(y)=1$.  For fixed $x$ and $t>0$, apply
\eqref{eq:energy-discrepancy} to the smooth test function
$y\mapsto H_t(x,y)$.  Standard derivative bounds for the heat kernel on a
compact two-manifold give
\begin{equation}\label{eq:heat-kernel-second-derivative}
 \sup_{x\in X}\|\ddc_yH_t(x,\cdot)\|_\infty\leq C_Xt^{-2}
 \qquad(0<t\leq t_0).
\end{equation}
Indeed, the Gaussian bound for two derivatives is
$C_Xt^{-(2+2)/2}\exp(-d_{g_\mu}(x,y)^2/(C_Xt))$; the fixed metric compares
$\ddc_yH_t$ with the $y$-Hessian.  Hence
\[
 \begin{aligned}
 |\rho_{N,t}(x)-1|
 &=\left|\int_XH_t(x,y)\,d(\nu_N-\mu)(y)\right|\\
 &\leq\frac{C_{X,K,\beta}}{N}
        \|\ddc_yH_t(x,\cdot)\|_\infty
 \leq\frac{C_{X,K,\beta}}{Nt^2}.
 \end{aligned}
\]
Taking the supremum in $x$ proves
\eqref{eq:energy-heat-uniform-error}.  Choose
$A^{\rm heat}_{X,K,\beta}$ so that
$C_{X,K,\beta}(A^{\rm heat}_{X,K,\beta})^{-2}\leq1/2$.
Then $t\geq A^{\rm heat}_{X,K,\beta}N^{-1/2}$ makes the right-hand side at most
$1/2$, proving \eqref{eq:energy-heat-positive-density}.
\end{proof}

\begin{corollary}[Square-root multiplicity from energy alone]
\label{cor:energy-multiplicity}
Under the hypotheses of \Cref{cor:Wasserstein},
\begin{equation}\label{eq:energy-atom-bound}
 \sup_{x\in X}\nu_N(\{x\})
 \leq C_{X,K,\beta}N^{-1/2}.
\end{equation}
Thus, if the entries of $W_N$ are counted with multiplicity, then
\begin{equation}\label{eq:energy-multiplicity-bound}
 \max_{x\in X}\#\{j:w_{j,N}=x\}
 \leq C_{X,K,\beta}\sqrt N.
\end{equation}
\end{corollary}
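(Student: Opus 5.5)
The plan is to test the second-order discrepancy estimate \eqref{eq:energy-discrepancy} against a scaled bump centred at the heavy point, and to optimize the radius. The mesoscopic upper count of \Cref{prop:energy-mesoscopic-count} already contains exactly this mechanism, so we can read the bound off from it directly.

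Fix $x\in X$ and $0<r\leq r_*$. Every entry equal to $x$ lies in $B_g(x,r)$. Hence, by \eqref{eq:energy-mesoscopic-count-volume},
\[
 N\nu_N(\{x\})\leq\#\{j:w_{j,N}\in B_g(x,r)\}\leq C_XNr^2+C_{X,K,\beta}r^{-2}.
\]
The two terms balance when $r^4\asymp N^{-1}$, so we take $r=N^{-1/4}$. For $N$ large enough that $N^{-1/4}\leq r_*$, this gives $N\nu_N(\{x\})\leq(C_X+C_{X,K,\beta})N^{1/2}$. The finitely many remaining small $N$ are handled by the trivial bound $\nu_N(\{x\})\leq1\leq N^{-1/2}\,r_*^{-2}$; indeed $N^{-1/4}>r_*$ means $N<r_*^{-4}$, so $1\leq r_*^{-2}N^{-1/2}$ for those $N$. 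Enlarging the constant then gives \eqref{eq:energy-atom-bound} uniformly in $x$ and $N$. Multiplying by $N$ gives \eqref{eq:energy-multiplicity-bound}.

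If one wants to avoid relying on the outer cutoff, the same argument works with a single bump $\phi_{x,r}$ from \eqref{eq:covering-bump-estimates}. Since $\phi_{x,r}\geq0$ and $\phi_{x,r}(x)=1$, we have $\nu_N(\{x\})\leq\int\phi_{x,r}\,d\nu_N\leq\int\phi_{x,r}\,d\mu+\frac{C}{Nr^2}\leq C'r^2+\frac{C}{Nr^2}$, and the same choice $r=N^{-1/4}$ gives $O(N^{-1/2})$. There is no real obstacle here. The only points needing care are that the bump constants are uniform in the centre $x$, which follows from the finite-atlas construction, and that the small-$N$ range is absorbed as above.
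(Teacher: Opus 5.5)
Your proposal is correct and follows essentially the same route as the paper. You apply the mesoscopic upper count of \Cref{prop:energy-mesoscopic-count} at $r=N^{-1/4}$, use that an atom at $x$ lies in every ball centred at $x$, and absorb the finitely many $N$ with $N^{-1/4}>r_*$ into the constant. Your explicit small-$N$ bound and the single-bump variant are valid but only spell out details the paper leaves implicit.
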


\begin{proof}
For all sufficiently large $N$, apply
\eqref{eq:energy-mesoscopic-count-upper} with $r=N^{-1/4}\leq r_*$.  Since
an atom at $x$ lies in every centered ball, division by $N$ and small-ball
volume comparison give
\[
 \nu_N(\{x\})
 \leq C_{X,K,\beta}\bigl(r^2+N^{-1}r^{-2}\bigr)
 \leq C_{X,K,\beta}N^{-1/2}.
\]
Enlarging the constant handles the finitely many smaller $N$ and proves
\eqref{eq:energy-atom-bound}.  Multiplication by $N$ gives
\eqref{eq:energy-multiplicity-bound}.
\end{proof}

\begin{theorem}[Negative Sobolev discrepancy from Green energy]
\label{thm:energy-negative-Sobolev}
Fix a nonnegative Laplace--Beltrami operator on $X$ and use it to define the
Hilbert Sobolev scale $H^s(X)$.  Under the hypotheses of
\Cref{cor:Wasserstein}, put $\sigma_N=\nu_N-\mu$.  Then, for every
$1<s\leq3$ and every $\varepsilon>0$,
\begin{equation}\label{eq:energy-negative-Sobolev-intermediate}
 \|\sigma_N\|_{H^{-s}}
 \leq C_{X,K,\beta,s,\varepsilon}
       N^{-(s-1)/2+\varepsilon}.
\end{equation}
For every $s>3$,
\begin{equation}\label{eq:energy-negative-Sobolev-C2}
 \|\sigma_N\|_{H^{-s}}
 \leq C_{X,K,\beta,s}N^{-1}.
\end{equation}
Changing the fixed Laplace--Beltrami operator replaces these norms by
equivalent ones and changes only the constants.
\end{theorem}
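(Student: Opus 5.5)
Both bounds will come from duality: $\|\sigma_N\|_{H^{-s}}=\sup\{|\int\phi\,d\sigma_N|:\|\phi\|_{H^s}\le1\}$. Since $\sigma_N$ has total mass zero, we may assume $\phi$ has $\mu$-mean zero after subtracting a constant, which does not change the pairing. There are then two inputs to interpolate between.

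The first input is the second-order estimate \eqref{eq:energy-discrepancy}: $|\int\phi\,d\sigma_N|\le C N^{-1}\|\ddc\phi\|_\infty$. On a surface, Sobolev embedding gives $H^s\hookrightarrow C^2$ for $s>3$, so $\|\ddc\phi\|_\infty\le C_s\|\phi\|_{H^s}$. This yields \eqref{eq:energy-negative-Sobolev-C2} immediately.

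The second input is the Lipschitz rate of \Cref{cor:Wasserstein}: $|\int\phi\,d\sigma_N|\le CN^{-1/2}\|\phi\|_{\mathrm{Lip}}$. For $s>2$ we have $H^s\hookrightarrow C^{0,1}$, which gives order $N^{-1/2}$ there. For $1<s\le2$ we instead use $H^s\hookrightarrow C^{0,\alpha}$ for every $\alpha<s-1$, together with \Cref{cor:energy-holder-discrepancy}, which gives $N^{-\alpha/2}$, i.e.\ $N^{-(s-1)/2+\varepsilon}$. This is the source of the $\varepsilon$-loss: the borderline embedding $H^{1+\alpha}\hookrightarrow C^{0,\alpha}$ fails in dimension two.

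For $2<s\le3$ the two inputs must be combined, and I would use heat smoothing, as in the proof of \Cref{cor:Wasserstein}. Write $\phi=(\phi-\phi_t)+\phi_t$ with $\phi_t=e^{-t\Delta}\phi$.
\begin{itemize}
\item The smooth part satisfies $|\int\phi_t\,d\sigma_N|\le CN^{-1}\|\ddc\phi_t\|_\infty$. By spectral calculus and Sobolev embedding, $\|\ddc\phi_t\|_\infty\le C\|\phi_t\|_{H^{3+\delta}}\le C t^{-(3+\delta-s)/2}\|\phi\|_{H^s}$.
\item The remainder satisfies $|\int(\phi-\phi_t)\,d\sigma_N|\le 2\|\phi-\phi_t\|_\infty$, and $\|\phi-\phi_t\|_\infty\le C\|\phi-\phi_t\|_{H^{1+\delta}}\le Ct^{(s-1-\delta)/2}\|\phi\|_{H^s}$, using $\|(1-e^{-t\Delta})\Delta^{-a}\|\le Ct^{a}$ on mean-zero functions.
\end{itemize}
Balancing $t^{(s-1-\delta)/2}$ against $N^{-1}t^{-(3+\delta-s)/2}$ gives $t\approx N^{-1}$ up to $\delta$-powers. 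The bound is then $N^{-(s-1)/2+O(\delta)}$, and choosing $\delta$ small relative to $\varepsilon$ gives \eqref{eq:energy-negative-Sobolev-intermediate}. The same argument covers $1<s\le2$ as well, which makes the Hölder route above optional. At $s=3$ the exponent is $1-\varepsilon$, consistent with \eqref{eq:energy-negative-Sobolev-C2}.

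The main point of care is the remainder term. It is controlled by the crude sup-norm bound, using only that $\sigma_N$ has total variation at most $2$, so the $\varepsilon$-losses come from two places: the non-endpoint embeddings $H^{1+\delta}\hookrightarrow L^\infty$ and $H^{3+\delta}\hookrightarrow C^2$. The spectral estimates involved are routine, namely $\|e^{-t\Delta}\|_{H^s\to H^{s+a}}\le Ct^{-a/2}$ for $0<t\le1$, and $\|1-e^{-t\Delta}\|_{H^s\to H^{s-a}}\le Ct^{a/2}$ on mean-zero functions for $0\le a\le2$. Finally, changing the Laplacian gives equivalent $H^s$ norms on the compact surface, which changes only the constants.
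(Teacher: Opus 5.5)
Your proof is correct. For $s>3$ and for $1<s\le2$ it is the paper's argument: $H^s\hookrightarrow C^2$ with \eqref{eq:energy-discrepancy} in the first case, and $H^s\hookrightarrow C^{0,\alpha}$ with \Cref{cor:energy-holder-discrepancy} in the second.

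\textbf{Where you differ: $2<s\le3$.} The paper does not smooth test functions in this range. It proves two endpoint bounds:
\begin{itemize}
\item $\|\sigma_N\|_{H^{-(2+\delta)}}\lesssim N^{-1/2}$, from $H^{2+\delta}\hookrightarrow C^1$, Kantorovich--Rubinstein duality and \Cref{cor:Wasserstein};
\item $\|\sigma_N\|_{H^{-(3+\delta)}}\lesssim N^{-1}$.
\end{itemize}
It then applies the Hilbert-scale log-convexity inequality $\|\sigma\|_{H^{-s}}\le\|\sigma\|_{H^{-s_0}}^{1-\theta}\|\sigma\|_{H^{-s_1}}^{\theta}$ with $s_0=2+\delta$, $s_1=3+\delta$, $\theta=s-s_0$.

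Your heat-semigroup splitting is instead a real-interpolation argument on the test-function side. It interpolates between the trivial bound $2\|\phi\|_\infty$ and the $C^2$ bound $CN^{-1}\|\phi\|_{C^2}$. The spectral estimates you quote are right. At $t=N^{-1}$ the two terms balance to exactly $N^{-(s-1-\delta)/2}$, so $\delta<2\varepsilon$ finishes, and the same computation indeed covers $1<s\le2$.

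\textbf{What each route buys.} Yours is self-contained and uniform. It needs only \eqref{eq:energy-discrepancy} and $|\sigma_N|(X)\le2$, handles all of $1<s\le3$ at once, and never uses the $W_1$ or H\"older corollaries. In effect it re-runs, directly in Sobolev norms, the heat-smoothing mechanism that produced those corollaries. The paper's route is more modular: it reuses macroscopic estimates already proved and needs only abstract Hilbert-scale interpolation, with no semigroup mapping bounds on the Sobolev scale.

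\textbf{One inaccurate side remark.} In dimension two the embedding $H^{1+\alpha}\hookrightarrow C^{0,\alpha}$ does hold for $0<\alpha<1$. It fails only at the integer endpoints: $H^1\not\hookrightarrow L^\infty$ and $H^2\not\hookrightarrow C^{0,1}$. So it is not the source of an $\varepsilon$-loss. Taking $\alpha=s-1$, the H\"older route gives $\|\sigma_N\|_{H^{-s}}\le C_sN^{-(s-1)/2}$ with no loss for $1<s<2$. This does not affect your proof, which only uses $\alpha<s-1$. Your attribution of the loss in the heat-smoothing argument to $H^{1+\delta}\hookrightarrow L^\infty$ and $H^{3+\delta}\hookrightarrow C^2$ is correct.
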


\begin{proof}
We use three estimates.  First, because $s>1$ implies
$H^s(X)\hookrightarrow C^0(X)$ in real dimension two, every difference of
probability measures has uniformly bounded $H^{-s}$ norm.  This disposes of
values of $\varepsilon$ for which the right-hand side of
\eqref{eq:energy-negative-Sobolev-intermediate} does not decay.  Thus, in
the first range below, it suffices to consider
$0<\varepsilon<(s-1)/2$.

Second, let $1<s\leq2$ and
$0<\varepsilon<(s-1)/2$.  Set
$\alpha=s-1-2\varepsilon\in(0,1)$.  The strict Sobolev embedding
\[
 H^s(X)\hookrightarrow C^{0,\alpha}(X)
\]
and \eqref{eq:energy-holder-discrepancy} give, for real smooth $f$,
\[
 |\langle\sigma_N,f\rangle|
 \leq C_{s,\varepsilon}N^{-\alpha/2}\|f\|_{H^s}
 =C_{s,\varepsilon}N^{-(s-1)/2+\varepsilon}\|f\|_{H^s}.
\]
Density of smooth functions proves
\eqref{eq:energy-negative-Sobolev-intermediate} in this range.

Third, fix any $s_0>2$.  Since $\sigma_N(X)=0$, constants in test functions
play no role.  The embedding
$H^{s_0}(X)\hookrightarrow C^1(X)$ and Kantorovich--Rubinstein duality give
\begin{equation}\label{eq:negative-Sobolev-W1-endpoint}
 \|\sigma_N\|_{H^{-s_0}}
 \leq C_{s_0}W_1(\nu_N,\mu)
 \leq C_{X,K,\beta,s_0}N^{-1/2}.
\end{equation}
If $s_1>3$, the embedding $H^{s_1}(X)\hookrightarrow C^2(X)$ and
\eqref{eq:energy-discrepancy} instead give
\begin{equation}\label{eq:negative-Sobolev-C2-endpoint}
 \|\sigma_N\|_{H^{-s_1}}
 \leq C_{X,K,\beta,s_1}N^{-1}.
\end{equation}
This proves \eqref{eq:energy-negative-Sobolev-C2} directly.

It remains to interpolate when $2<s\leq3$.  Choose
\[
 0<\delta<\min\{s-2,2\varepsilon\},
 \qquad s_0=2+\delta,
 \qquad s_1=3+\delta,
\]
and put $\theta=s-s_0\in(0,1)$.  Since $s_1-s_0=1$, the Hilbert-scale
interpolation inequality, together with
\eqref{eq:negative-Sobolev-W1-endpoint} and
\eqref{eq:negative-Sobolev-C2-endpoint}, yields
\[
 \begin{aligned}
 \|\sigma_N\|_{H^{-s}}
 &\leq C
   \|\sigma_N\|_{H^{-s_0}}^{1-\theta}
   \|\sigma_N\|_{H^{-s_1}}^\theta\\
 &\leq C N^{-\frac12(1-\theta)-\theta}
 =C N^{-(s-1-\delta)/2}
 \leq C N^{-(s-1)/2+\varepsilon}.
 \end{aligned}
\]
This includes $s=3$ and completes the proof.
\end{proof}

\section{The optimal constants as functions of the exponent}
\label{sec:optimal-constants}

We now turn from configurations satisfying \eqref{eq:energy-hypothesis} to the
optimal constants themselves.  Everything in this section and the next is for
one fixed $N$; no estimate is claimed uniform in $N$.

\subsection{Optimal constants and microscopic limits}

For $W\in X^N$, put
\[
 V_W=\sum_{w\in W}U_X(\,\cdot\,,w),
 \qquad Y_W=e^{V_W},
\]
and define the pointwise endpoint
\begin{equation}\label{eq:theta-infinity}
 \Theta_{N,\infty}(X)
 :=\inf_{W\in X^N}\|Y_W\|_{L^\infty(X)}
 =\exp\!\left(\inf_{W\in X^N}\sup_XV_W\right).
\end{equation}
The factor $e^{U_X(z,w)}$ extends continuously by zero across $z=w$, so
$Y_W$ is a continuous nonnegative function of $(z,W)\in X\times X^N$.
Thus the infimum in \eqref{eq:theta-infinity} is attained.  The geometric
mean of $Y_W$ is one, and therefore
\begin{equation}\label{eq:theta-infinity-bounds}
 1<\Theta_{N,\infty}(X)\leq e^{C_{X,\mu}}.
\end{equation}
The upper bound is \Cref{thm:packing-input}.  For strictness, an attaining
function
with $\|Y_W\|_\infty\leq1$ would have
$V_W=\log Y_W\leq0$ and mean zero, hence $V_W=0$ almost everywhere.  This
is impossible because $Y_W$ tends to zero at every node and is continuous.

\begin{problem}[Optimal constants]\label{prob:optimal-constants}
Determine the asymptotic behavior, or the limit if it exists, of
$\Theta_{N,\beta}(X)$ and $\Theta_{N,\infty}(X)$ as $N\to\infty$.
\end{problem}

Two things can be said in complete generality before any microscopic analysis.
First, neither optimal constant can collapse to the Jensen value one: the
following theorem shows that the excess is bounded below uniformly in $N$, at the
exact order $\beta^2$ for small $\beta$.  Second, on the round sphere and for
$N=2$ the whole problem can be solved in closed form; see
\Cref{subsec:sphere-two-point-model}.

\begin{theorem}[Uniform non-collapse of the optimal constants]
\label{thm:theta-uniform-noncollapse}
There are constants $a_X,c_X,C_X>0$, independent of $N$, such that the
following statements hold.
\begin{enumerate}[label=\textup{(\roman*)}]
\item Every $W\in X^N$ satisfies
\begin{equation}\label{eq:theta-uniform-L1-gap}
 \|V_W\|_{L^1(X,\mu)}\ge a_X.
\end{equation}
\item For every $\beta>0$,
\begin{equation}\label{eq:theta-uniform-finite-gap}
 \Theta_{N,\beta}(X)\ge1+c_X\beta^2.
\end{equation}
\item The pointwise endpoint satisfies
\begin{equation}\label{eq:theta-uniform-infinity-gap}
 \Theta_{N,\infty}(X)\ge e^{a_X/2}>1.
\end{equation}
\item For $0<\beta\le1$,
\begin{equation}\label{eq:theta-uniform-small-beta}
 1+c_X\beta^2
 \le\Theta_{N,\beta}(X)
 \le1+C_X\beta^2.
\end{equation}
\end{enumerate}
Consequently neither $\Theta_{N,\beta}(X)$, for fixed $\beta>0$, nor
$\Theta_{N,\infty}(X)$ can converge to the Jensen value one along a
subsequence.
\end{theorem}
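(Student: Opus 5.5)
The four parts reduce to two analytic inputs: an $N$-uniform $L^1$ lower bound for $V_W$, which gives (i) and then (ii)--(iii), and an $N$-uniform $L^2$ upper bound for one well-chosen configuration, which gives (iv). After (i), everything else is elementary convexity.

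\emph{Step 1: the $L^1$ gap (i).} I would argue by duality against $\ddc$. For any real $\phi\in C^2(X)$,
\[
 N\!\int_X\phi\,d(\mu-\nu_W)=-\int_XV_W\,\ddc\phi
 \le\|V_W\|_{L^1}\|\ddc\phi/\mu\|_\infty .
\]
So it suffices to build, for each $W$, a test function $\phi_W$ with $\|\ddc\phi_W/\mu\|_\infty\le C_X$, $\phi_W\le0$ on the nodes, and $\int\phi_W\,d\mu\ge c_X/N$.

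The natural candidate is a soft-min of rescaled squared distances, capped at height $1/N$:
\[
 \phi_W=\frac1N\,\eta\!\Bigl(-\log\sum_j\chi_j\,e^{-N d_g(\cdot,w_j)^2}\Bigr),
\]
where the $\chi_j$ are cutoffs keeping $d_g^2$ inside the injectivity radius and $\eta$ is a smooth concave cap with $\eta(s)\le s$ near $0$ and $\eta\equiv1$ for large $s$. The Hessian of a log-sum-exp is a weighted average of the Hessians of $N d_j^2$, each $O(N)$, minus the weighted variance of the gradients $N\nabla d_j^2$, each $O(\sqrt N)$ on the relevant region. After the factor $1/N$ both contributions are $O(1)$, so $\|\ddc\phi_W\|_\infty\le C_X$. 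It is important that this is a two-sided bound; the plain $\min_j d_j^2$ would only be semiconcave.

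On the nodes the inner quantity is $\le0$, so $\phi_W\le0$ there after normalizing $\eta(s)\le0$ for $s\le0$. By the small-ball bound $\mu(B(x,r))\le C_{\rm vol}r^2$, the set $\{d(\cdot,W)\ge c_0N^{-1/2}\}$ has $\mu$-measure at least $1/2$. On that set, however, the soft-min can be spoiled by many nodes at distance $\approx N^{-1/2}$. This is where I expect the main obstacle. My first fix is to replace the $N^{-1/2}$ scale by $\lambda N^{-1/2}$ with $\lambda$ small: then $\sum_j e^{-N d_j^2/\lambda^2}$ is $\le\tfrac12$ on a set of $\mu$-measure $\ge\tfrac14$, by a dyadic count using the upper mesoscopic bound on $\#\{j:w_j\in B(x,r)\}$. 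If that count is unavailable for arbitrary $W$, I would instead average over $x$: integrate $\sum_j e^{-N d_j^2/\lambda^2}$ against $\mu$, which gives $\le C\lambda^2$ exactly, and then apply Markov. With $\int\phi_W\,d\mu\ge c/N$ this yields $a_X=c/C$.

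\emph{Step 2: (iii) and (ii).} Part (iii) is immediate from (i). Since $\int V_W\,d\mu=0$, we have $\int(V_W)_+\,d\mu=\tfrac12\|V_W\|_{L^1}\ge a_X/2$, hence $\sup V_W\ge a_X/2$.

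For (ii) with $0<\beta\le1$, I would use $e^y-1-y\ge c\min(y^2,|y|)$ together with $\int V_W\,d\mu=0$. Splitting at $|V_W|=1$, this gives
\[
 B_\beta-1\ge c\beta^2\Bigl[\Bigl(\int_{|V|\le1}|V|\,d\mu\Bigr)^2+\int_{|V|>1}|V|\,d\mu\Bigr]\ge c'\beta^2a_X^2 .
\]
For $\beta>1$, the monotonicity of $L^p$ norms on a probability space gives $\Theta_{N,\beta}\ge\Theta_{N,1}^{\beta}\ge(1+c)^\beta$. This dominates $1+c''\beta^2$ after shrinking the constant.

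\emph{Step 3: the upper bound in (iv).} I would take the packing configuration of \Cref{thm:packing-input}, for which $V_W\le C_{X,\mu}$. For $\beta\le1$ and $y\le C_{X,\mu}$ one has $e^{\beta y}\le1+\beta y+C\beta^2y^2$, and the linear term integrates to zero. So it suffices to prove $\int V_W^2\,d\mu\le C_X$ uniformly in $N$.

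The positive part is bounded pointwise. For the negative part, I would use that $V_W+N\psi$ is subharmonic for a local potential $\psi$ of $\mu$, and that $V_W\le C$. Local Riesz decomposition on balls of radius $\asymp N^{-1/2}$, together with the fact that the packing configuration has $O(1)$ nodes per such ball, should bound $V_W$ from below by $\sum_{\text{nearby }j}\log(\sqrt N\,d(\cdot,w_j))-C$. Squaring and integrating, each ball contributes $O(1/N)$, so $\int(V_W)_-^2\,d\mu=O(1)$.

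If the packing theorem does not supply the per-ball node count, I would fall back on the spectral configurations of \cite{PaperA}. For those, the partition into cells of area $1/N$ gives this bound directly.
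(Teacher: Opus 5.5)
\textbf{Step~1 has a genuine gap.} It sits exactly where you said the argument needs it: the two-sided bound $\|\ddc\phi_W\|_\infty\le C_X$, uniform in $W$ and $N$, is false for the log-sum-exp soft-min.

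Write $F_j=Nd_j^2$, $L=-\log\sum_je^{-F_j}$ and $p_j=e^{-F_j}/\sum_ke^{-F_k}$. Then $\nabla^2L=\mathbb E_p[\nabla^2F]-\operatorname{Cov}_p(\nabla F)$. So $L$ is uniformly semiconcave, $\nabla^2L\le CN$, just like $\min_jd_j^2$, but it has no uniform lower Hessian bound. The reason is that $L=O(1)$ does not force the weighted nodes to lie within $O(N^{-1/2})$, so the gradients $N\nabla d_j^2$ need not be $O(\sqrt N)$ where it matters.

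For instance, place $n$ nodes equally spaced on a circle of radius $d$ about $z_0$, with $Nd^2=\log n$ and no closer node. Then $L(z_0)=0$, $\nabla L(z_0)=0$, and
\[
 \nabla^2L(z_0)\approx2N\,I-2N^2d^2\,I=2N(1-\log n)\,I .
\]
Hence the coordinate Hessian of $\phi_W$ at $z_0$ is $\approx2\eta'(0)(1-\log n)I$, which is unbounded as $n\to\infty$. The same happens at any level $L=s$ with $\eta'(s)>0$, and after your rescaling by $\lambda$.

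Nothing in (i), which concerns every $W\in X^N$, excludes such clusters. Starting from a configuration that is separated and covering at scale $N^{-1/2}$, you can create one with $n=N^{\varepsilon}$ by moving $O(n)$ nodes a distance $O(\sqrt{n/N})$, which changes $\|V_W\|_1$ by only $o(1)$. For such $W$ your argument yields only $\|V_W\|_1\gtrsim1/\log N$.

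Your Markov estimate for the measure of $\{\sum_je^{-F_j}\le\frac12\}$ is fine. The real obstacle is the Hessian, not the measure.

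The repair is a smoothing whose second-derivative bound depends only on a Lipschitz constant, not on the configuration. Heat-smooth (or mollify) a Lipschitz function such as $d_g(\cdot,\operatorname{supp}\nu_W)$ at time $t\asymp N^{-1}$. This is the paper's proof:
\begin{itemize}
\item[(a)] for one-Lipschitz $\phi$, the bounds $\|\phi-\phi_t\|_\infty\le C\sqrt t$ and $\|\ddc\phi_t\|_\infty\le Ct^{-1/2}$ give $W_1(\nu_W,\mu)\le C_X\bigl(\sqrt t+\|V_W\|_1/(N\sqrt t)\bigr)$;
\item[(b)] the atomic bound of \Cref{cor:Wasserstein-lower} gives $W_1(\nu_W,\mu)\ge c_XN^{-1/2}$;
\item[(c)] choosing $t=\|V_W\|_1/N$ when $\|V_W\|_1\le N$ yields (i).
\end{itemize}

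\textbf{Steps~2 and~3.} Given (i), your treatment of (ii) and (iii) is correct. For (ii), the paper's one-line bound $B_\beta(W;X)-1\ge\frac{\beta^2}{2}\int(V_W)_+^2\,d\mu\ge\frac{\beta^2}{8}\|V_W\|_1^2$ covers all $\beta>0$ at once, so your split at $\beta=1$ is unnecessary.

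In Step~3 the reduction to $\int V_W^2\,d\mu\le C_X$ for the packing configurations matches the paper, but your sketch of that bound does not close. With only $V_W\le C$, a Riesz decomposition on balls of radius $\asymp N^{-1/2}$ bounds the harmonic part from above. Harnack applied to $C-h\ge0$ then needs a lower bound on circle means of $V_W$ at that scale, which is the very estimate in question. An upper count of $O(1)$ nodes per ball does not supply it: a screened region of doubled density and radius $N^{-1/4}$ keeps $V_W\le0$ locally yet contributes about $\sqrt N$ to $\int V_W^2\,d\mu$.

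The paper takes the $L^2$ bound from the local logarithmic model of the configurations of \cite{PaperA} on $N$ disjoint rescaled zero balls. That is structure beyond the sup bound stated in \Cref{thm:packing-input}, and you should invoke it directly.
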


\begin{proof}
The argument has two halves.  First we show that $A_W=\|V_W\|_{L^1}$ cannot be
small, by playing the heat-smoothed upper bound for $W_1$ against the atomic
lower bound; the optimal choice of the smoothing time is what produces the
constant.  Then we convert that into a lower bound for the exponential energy,
using the mean-zero identity and convexity of $e^x-1-x$.

Fix a Riemannian distance on $X$ and put
$A_W:=\|V_W\|_{L^1(X,\mu)}$.  The heat-smoothing estimates used in the proof
of \Cref{cor:Wasserstein} give, for every $0<t\le1$,
\begin{equation}\label{eq:theta-potential-transport}
 W_1(\nu_W,\mu)
 \le C_X\left(\sqrt t+\frac{A_W}{N\sqrt t}\right).
\end{equation}
Indeed, for a one-Lipschitz test function $\phi$ and
$\phi_t=e^{-t\Delta_\mu}\phi$,
\[
 \|\phi-\phi_t\|_\infty\le C_X\sqrt t,
 \qquad
 \|\ddc\phi_t\|_\infty\le C_Xt^{-1/2},
\]
and the identity $\ddc V_W=N(\nu_W-\mu)$ gives
\[
 \left|\int_X\phi_t\,d(\nu_W-\mu)\right|
 \le\frac{C_XA_W}{N\sqrt t}.
\]
On the other hand, \Cref{cor:Wasserstein-lower} gives
$W_1(\nu_W,\mu)\ge c_XN^{-1/2}$ because $\nu_W$ is supported on at most
$N$ points.  If $A_W\le N$, choose $t=A_W/N$ in
\eqref{eq:theta-potential-transport}; if $A_W>N$, there is nothing to prove.
After decreasing the constant, this yields
\eqref{eq:theta-uniform-L1-gap}.

Since $\int_XV_W\,d\mu=0$,
\[
 \int_X(V_W)_+\,d\mu=\frac12\|V_W\|_1.
\]
Moreover, $e^x-1-x\ge x^2/2$ for $x\ge0$.  Therefore
\begin{align*}
 B_\beta(W;X)-1
 &=\int_X\bigl(e^{\beta V_W}-1-\beta V_W\bigr)\,d\mu\\
 &\ge\frac{\beta^2}{2}\int_{\{V_W>0\}}V_W^2\,d\mu
 \ge\frac{\beta^2}{8}\|V_W\|_1^2.
\end{align*}
Taking the infimum proves \eqref{eq:theta-uniform-finite-gap}.  Likewise,
\[
 \sup_XV_W\ge\int_X(V_W)_+\,d\mu
 =\frac12\|V_W\|_1,
\]
which proves \eqref{eq:theta-uniform-infinity-gap}.

It remains to prove the upper estimate in
\eqref{eq:theta-uniform-small-beta}.  Use the configurations of \Cref{thm:packing-input}.  The
local logarithmic model and the scaling $d\mu\le C_XN^{-1}dA(u)$ on each of
the $N$ disjoint rescaled zero balls give
\[
 \sup_XV_{W_N}\le C_X,
 \qquad
 \|V_{W_N}\|_{L^2(X,\mu)}\le C_X.
\]
For $0<\beta\le1$, the elementary Taylor remainder estimate, separately on
$\{V_{W_N}\le0\}$ and $\{V_{W_N}>0\}$, gives
\[
 0\le e^{\beta V_{W_N}}-1-\beta V_{W_N}
 \le C_X\beta^2V_{W_N}^2.
\]
Integration and the mean-zero identity imply
$B_\beta(W_N;X)\le1+C_X\beta^2$.  The lower estimate is already
\eqref{eq:theta-uniform-finite-gap}.
\end{proof}

\begin{remark}[What remains of the optimal-constants problem]
\Cref{thm:theta-uniform-noncollapse} settles the previously open possibility
that the optimal constants might collapse to one.  The remaining problem is
to prove existence of the large-$N$ limits, identify their values, and
classify the associated microscopic minimizers.
\end{remark}


The exponent enters $\Theta_{N,\beta}$ in two ways at once, through the integrand
and through the normalization, so it is worth recording the resulting monotonicity
before anything finer.

\begin{proposition}[Strict monotonicity in the exponent]
\label{prop:theta-beta-monotone}
For fixed $N$ and $X$, the function
\[
 \beta\longmapsto\Theta_{N,\beta}(X)^{1/\beta}
\]
is strictly increasing on $(0,\infty)$.  More precisely, if
$0<\beta_1<\beta_2<\infty$, then
\[
 1<\Theta_{N,\beta_1}(X)^{1/\beta_1}
 <\Theta_{N,\beta_2}(X)^{1/\beta_2}
 <\Theta_{N,\infty}(X).
\]
\end{proposition}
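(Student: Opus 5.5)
The plan is to use Lyapunov/Hölder monotonicity of $L^p$ norms of $Y_W=e^{V_W}$ with respect to the probability measure $\mu$, and to upgrade it to strict inequalities using attainment of minimizers and the non-constancy of $Y_W$. Note first that $B_\beta(W;X)^{1/\beta}=\|Y_W\|_{L^\beta(\mu)}$. For fixed $W$, the map $p\mapsto\|Y_W\|_{L^p(\mu)}$ is nondecreasing on $(0,\infty]$ by Jensen/Hölder, since $\mu(X)=1$. Equality $\|Y\|_{p_1}=\|Y\|_{p_2}$ with $p_1<p_2$ forces $Y$ to be $\mu$-a.e. constant; this follows from the equality case of Jensen for the strictly convex function $s\mapsto s^{p_2/p_1}$ applied to $Y^{p_1}$, and for $p_2=\infty$ from the fact that $\|Y\|_{p_1}<\|Y\|_\infty$ unless $Y=\|Y\|_\infty$ a.e. Here $Y_W$ is continuous, vanishes at the nodes, and has geometric mean $\exp\int V_W\,d\mu=1>0$, so it is not a.e. constant. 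Hence, strictly for every $W$, we have $1<\|Y_W\|_{\beta_1}<\|Y_W\|_{\beta_2}<\|Y_W\|_\infty$. The leftmost inequality also follows from $\|Y\|_p>\exp\int\log Y\,d\mu=1$, which is strict Jensen.

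Next, I would pass to infima using existence of minimizers. The map $W\mapsto B_\beta(W;X)$ is continuous on the compact space $X^N$: by \Cref{lem:green-joint-local}, $e^{\beta V_W(z)}=\prod_j e^{\beta U_X(z,w_j)}$ is jointly continuous and bounded on $X\times X^N$, so dominated convergence applies. Thus $\Theta_{N,\beta}$ is attained by some $W_\beta$, and $\Theta_{N,\infty}$ is attained as already noted after \eqref{eq:theta-infinity}. With these minimizers the chain closes. For the leftmost inequality, $\Theta_{N,\beta_1}^{1/\beta_1}=\|Y_{W_{\beta_1}}\|_{\beta_1}>1$. For the middle one, take a minimizer $W_{\beta_2}$ for $\beta_2$; then $\Theta_{N,\beta_1}^{1/\beta_1}\le\|Y_{W_{\beta_2}}\|_{\beta_1}<\|Y_{W_{\beta_2}}\|_{\beta_2}=\Theta_{N,\beta_2}^{1/\beta_2}$. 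For the rightmost one, take $W_\infty$ attaining \eqref{eq:theta-infinity}; then $\Theta_{N,\beta_2}^{1/\beta_2}\le\|Y_{W_\infty}\|_{\beta_2}<\|Y_{W_\infty}\|_\infty=\Theta_{N,\infty}$.

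The only delicate point is that strictness must survive the infimum. This is exactly why attainment is used: I evaluate the strict pointwise inequality at the minimizer of the \emph{larger} exponent, rather than comparing infima directly. Continuity of $B_\beta$ in $W$ through collisions, where several $w_j$ coincide, is handled by the continuous-extension-by-zero clause of \Cref{lem:green-joint-local}. At a collision the product still behaves like $|z-w|^{\beta k}$ times a bounded factor, so the integrand stays uniformly bounded.
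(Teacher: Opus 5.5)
Your proposal is correct and follows essentially the same argument as the paper. Strict power-mean monotonicity holds for each fixed $W$ because $Y_W$ is not a.e.\ constant. You pass this to the infimum by evaluating at a minimizer of the larger exponent, and likewise at a minimizer of $\|Y_W\|_\infty$. The lower bound comes from strict Jensen together with compact attainment.
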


\begin{proof}
For a fixed configuration put $Y_W(z)=e^{\sum_jU_X(z,w_j)}$.  On the
probability space $(X,\mu)$, monotonicity of power means gives
\[
 B_{\beta_1}(W;X)^{1/\beta_1}
 =\|Y_W\|_{L^{\beta_1}}
 \leq\|Y_W\|_{L^{\beta_2}}
 =B_{\beta_2}(W;X)^{1/\beta_2}
\]
whenever $0<\beta_1<\beta_2$.  The inequality is strict for every $W$:
$Y_W$ is continuous, vanishes at each node, and is positive away from the
finitely many nodes, so it is not constant almost everywhere.  Joint
continuity of $Y_W(z)$ on the compact space $X\times X^N$ implies that
$F_{\beta_2}(W):=\|Y_W\|_{L^{\beta_2}}$ attains its minimum.  If
$W_{\beta_2}$ is a minimizer, then
\[
 \inf_W\|Y_W\|_{L^{\beta_1}}
 \leq\|Y_{W_{\beta_2}}\|_{L^{\beta_1}}
 <\|Y_{W_{\beta_2}}\|_{L^{\beta_2}}
 =\inf_W\|Y_W\|_{L^{\beta_2}}.
\]
The same argument with a minimizer of the continuous function
$W\mapsto\|Y_W\|_\infty$ proves the last strict inequality.  Finally,
strict Jensen gives $\|Y_W\|_{L^{\beta_1}}>1$ for every $W$, since $V_W$
has mean zero and is not almost everywhere constant.  Compact attainment
then gives the first strict inequality.
\end{proof}

\begin{proposition}[The exponent endpoints]
\label{prop:theta-beta-endpoints}
For every fixed positive integer $N$,
\begin{equation}\label{eq:theta-beta-endpoints}
 \lim_{\beta\downarrow0}\Theta_{N,\beta}(X)^{1/\beta}=1,
 \qquad
 \lim_{\beta\to\infty}\Theta_{N,\beta}(X)^{1/\beta}
 =\Theta_{N,\infty}(X).
\end{equation}
Moreover, $\Theta_{N,\beta}(X)>1$ for every fixed $N$ and $\beta>0$.
\end{proposition}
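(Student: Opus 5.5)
Both limits are sandwich arguments built on the power-mean monotonicity already used in \Cref{prop:theta-beta-monotone}. The final assertion is immediate from what precedes. Strict Jensen gives $B_\beta(W;X)>1$ for every $W$, and the infimum is attained by compactness and joint continuity of $Y_W$. Alternatively, \eqref{eq:theta-uniform-finite-gap} gives $\Theta_{N,\beta}\ge1+c_X\beta^2>1$ directly.

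\emph{Small exponents.} The lower bound $\Theta_{N,\beta}^{1/\beta}\ge1$ is Jensen. For the upper bound, fix one configuration $W_0$; the configuration of \Cref{thm:packing-input} is convenient, but any will do. Since $V_{W_0}$ has only logarithmic singularities, $e^{\beta V_{W_0}}$ is bounded by $e^{\beta\sup V_{W_0}}$, and $\sup V_{W_0}<\infty$. Hence $B_\beta(W_0)=1+\beta\int V_{W_0}\,d\mu+O(\beta^2)=1+O(\beta^2)$ as $\beta\downarrow0$. The $O(\beta^2)$ comes from the Taylor bound $|e^x-1-x|\le x^2e^{\max(x,0)}$ together with $V_{W_0}\in L^2(\mu)$, which holds because log singularities are square integrable. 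Then
\[
 \log\Theta_{N,\beta}^{1/\beta}\le\beta^{-1}\log(1+O(\beta^2))=O(\beta)\to0 .
\]
One could also cite \eqref{eq:theta-uniform-small-beta} directly: $\Theta_{N,\beta}\le1+C_X\beta^2$ gives $\Theta_{N,\beta}^{1/\beta}\le e^{C_X\beta}\to1$.

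\emph{Large exponents.} \Cref{prop:theta-beta-monotone} already gives the upper bound $\Theta_{N,\beta}^{1/\beta}<\Theta_{N,\infty}$, and it shows that the limit $L$ exists with $L\le\Theta_{N,\infty}$. For the reverse inequality, pick minimizers $W_\beta$ of $\|Y_W\|_{L^\beta}$ along a sequence $\beta_k\to\infty$ and pass to a subsequence converging in the compact space $X^N$ to some $W_*$. We need the following lower-semicontinuity step.
\begin{itemize}
\item[$\bullet$] By joint uniform continuity of $Y_W(z)$ on $X\times X^N$, for any $\eta>0$ and $k$ large we have $Y_{W_{\beta_k}}\ge Y_{W_*}-\eta$ uniformly on $X$.
\item[$\bullet$] Set $M=\|Y_{W_*}\|_\infty$. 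The set $E=\{Y_{W_*}>M-\eta\}$ is open and nonempty, so $\mu(E)>0$ because $\mu$ is a smooth positive form.
\item[$\bullet$] Then $\|Y_{W_{\beta_k}}\|_{L^{\beta_k}}\ge(M-2\eta)\mu(E)^{1/\beta_k}\to M-2\eta$.
\end{itemize}
Hence $L\ge M-2\eta$ for every $\eta>0$, so $L\ge\|Y_{W_*}\|_\infty\ge\Theta_{N,\infty}$.

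\textbf{Main obstacle.} The only delicate point is this uniform-convergence step. It requires that $Y_W(z)$ be jointly continuous in $(z,W)$ even at coincident nodes. That continuity is exactly the continuous zero extension recorded in \Cref{lem:green-joint-local}, which applies to products over nodes. Positivity of $\mu$ on open sets supplies the remaining ingredient.
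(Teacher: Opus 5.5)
Your proof is correct. The small-exponent limit and the final strict inequality follow the paper's route: compare with one fixed configuration and use Jensen. Your Taylor bound gives the rate $O(\beta)$, whereas the paper applies dominated convergence to the difference quotient $\beta^{-1}(e^{\beta V_W}-1)$, which needs only $V_W\in L^1$.

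The large-exponent limit is argued differently. The paper observes that $F_p(W)=\|Y_W\|_{L^p}$ increases, for integer $p$, pointwise to the continuous function $F_\infty(W)=\|Y_W\|_\infty$. Dini's theorem then makes this convergence uniform on $X^N$, which gives $\inf F_p\to\inf F_\infty$, and monotonicity of power means squeezes real $\beta$ between adjacent integers. You instead take the limit $L$ from \Cref{prop:theta-beta-monotone}, extract a convergent subsequence of finite-exponent minimizers $W_{\beta_k}\to W_*$, and prove the liminf inequality $L\ge F_\infty(W_*)$ directly. That step uses joint uniform continuity of $Y_W(z)$ and $\mu(E)>0$; take $\eta<1/2$ so that $M-2\eta>0$, which is automatic since $M\ge1$.

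Your route avoids Dini and the pointwise $L^p\to L^\infty$ limit. As a by-product, the chain $\Theta_{N,\infty}(X)\ge L\ge F_\infty(W_*)\ge\Theta_{N,\infty}(X)$ shows that every cluster point of minimizers minimizes the pointwise endpoint. That is the selection statement \eqref{eq:large-beta-minimizer-selection}, obtained for free.

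The paper's route gives something stronger: uniform convergence $F_\beta\to F_\infty$ on all of $X^N$. \Cref{prop:theta-minimizer-endpoint-selection} reuses exactly this to get $\omega_N(\beta)\to0$ and the objective-gap bound \eqref{eq:large-beta-objective-gap}. A liminf estimate along minimizers alone does not supply that uniform defect.
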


\begin{proof}
Since $\int_XV_W\,d\mu=0$, Jensen gives
$\|Y_W\|_{L^\beta}\geq1$.  For one fixed configuration $W$, the function
$V_W$ is bounded above and has only logarithmic negative singularities, so
$V_W\in L^1$.  Dominated convergence applied to the difference quotient at
$\beta=0$ gives
\[
 \log\|Y_W\|_{L^\beta}
 =\frac1\beta\log\int_Xe^{\beta V_W}\,d\mu\longrightarrow0.
\]
Taking the infimum over configurations and combining this upper comparison
with Jensen proves the first limit.

For the second, define on the compact space $X^N$
\[
 F_p(W):=\|Y_W\|_{L^p(X)},\qquad
 F_\infty(W):=\|Y_W\|_{L^\infty(X)}.
\]
Joint continuity of $Y_W(z)$ on $X\times X^N$ makes all these functions
continuous.  For positive integers $p$, the sequence $F_p$ is increasing
and converges pointwise to the continuous function $F_\infty$.  Dini's
theorem makes the convergence uniform on $X^N$.  Hence
\[
 \lim_{p\to\infty}\inf_{W\in X^N}F_p(W)
 =\inf_{W\in X^N}F_\infty(W)=\Theta_{N,\infty}(X).
\]
Monotonicity of power means squeezes arbitrary real $\beta\to\infty$
between adjacent integers and proves the second limit.

Finally, $B_\beta(\,\cdot\,;X)$ attains its minimum on the compact space $X^N$.
Indeed $e^{\gamma U_X}$ extends continuously by zero across the diagonal for every
$\gamma>0$ by \Cref{lem:green-joint-local}, so $Y_W(z)$ is jointly continuous on
$X\times X^N$ and dominated by the constant $e^{\beta NM_X}$, whence
$B_\beta(\,\cdot\,;X)$ is continuous; this is the argument written out in full in
\Cref{prop:minimizer-exists}, which we do not need here beyond continuity.  Strict
Jensen applies to every configuration because $V_W$ is not almost everywhere
constant, so the attained minimum is strictly greater than one.
\end{proof}

The endpoint limits also select definite variational problems at the level
of minimizing configurations.  The following statement is one-sided: it
locates every cluster point of finite-exponent minimizers, but it does not
assert that every endpoint minimizer is obtained along such a family.

\begin{proposition}[Selection of endpoint minimizers]
\label{prop:theta-minimizer-endpoint-selection}
Fix a positive integer $N$, equip $X^N$ with any product metric, and set
\begin{equation}\label{eq:quadratic-green-potential-energy}
 \mathcal E_N(W):=\int_XV_W^2\,d\mu,
 \qquad e_N:=\min_{W\in X^N}\mathcal E_N(W).
\end{equation}
Let
\[
 \mathfrak M_{N,\beta}:=\operatorname*{argmin}_{W\in X^N}B_\beta(W;X),
 \quad
 \mathfrak M_N^{(2)}:=\operatorname*{argmin}_{W\in X^N}\mathcal E_N(W),
 \quad
 \mathfrak M_{N,\infty}:=\operatorname*{argmin}_{W\in X^N}
      \|Y_W\|_\infty.
\]
Write the power means
\[
 F_\beta(W):=\left(\int_XY_W^\beta\,d\mu\right)^{1/\beta},
 \qquad F_\infty(W):=\max_XY_W,
\]
and define the uniform endpoint defect
\begin{equation}\label{eq:large-beta-uniform-defect}
 \omega_N(\beta):=\sup_{W\in X^N}
       \bigl(F_\infty(W)-F_\beta(W)\bigr).
\end{equation}
These sets are nonempty and compact, $e_N>0$, and
$\omega_N(\beta)\to0$ as $\beta\to\infty$.  As $\beta\downarrow0$,
\begin{align}
 \Theta_{N,\beta}(X)
 &=1+\frac{\beta^2}{2}e_N+O_{X,N}(\beta^3),
 \label{eq:theta-small-beta-expansion}\\
 \log\!\left(\Theta_{N,\beta}(X)^{1/\beta}\right)
 &=\frac{\beta}{2}e_N+O_{X,N}(\beta^2).
 \label{eq:theta-root-small-beta-expansion}
\end{align}
The objective gaps of finite-exponent minimizers satisfy
\begin{align}
 \sup_{W\in\mathfrak M_{N,\beta}}
  \bigl(\mathcal E_N(W)-e_N\bigr)
 &\leq C_{X,N}\beta,
 &&0<\beta\leq1,
 \label{eq:small-beta-objective-gap}\\
 \sup_{W\in\mathfrak M_{N,\beta}}
  \bigl(F_\infty(W)-\Theta_{N,\infty}(X)\bigr)
 &\leq\omega_N(\beta).
 \label{eq:large-beta-objective-gap}
\end{align}
Moreover,
\begin{align}
 \lim_{\beta\downarrow0}
 \sup_{W\in\mathfrak M_{N,\beta}}
 \dist_{X^N}(W,\mathfrak M_N^{(2)})&=0,
 \label{eq:small-beta-minimizer-selection}\\
 \lim_{\beta\to\infty}
 \sup_{W\in\mathfrak M_{N,\beta}}
 \dist_{X^N}(W,\mathfrak M_{N,\infty})&=0.
 \label{eq:large-beta-minimizer-selection}
\end{align}
In particular, every cluster point of minimizers as $\beta\downarrow0$
minimizes the quadratic Green-potential energy, and every cluster point as
$\beta\to\infty$ minimizes the pointwise endpoint.
\end{proposition}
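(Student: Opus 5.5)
The plan is to get everything from compactness of $X^N$, continuity of the three objectives, and one uniform Taylor expansion. That expansion rests on a single uniform moment bound. First I record that for fixed $N$ there is $C_{X,N}$ with
\[
 \sup_XV_W\le NM_X,\qquad \sup_{W\in X^N}\int_X|V_W|^p\,d\mu\le C_{X,N,p}\quad(p\le 3),
\]
where $M_X=\sup U_X<\infty$. The second bound follows from $|V_W|^p\le N^{p-1}\sum_j|U_X(\cdot,w_j)|^p$ and $\sup_w\int|U_X(z,w)|^p\,d\mu(z)<\infty$. The latter is a consequence of the joint local form in \Cref{lem:green-joint-local}, since $\log|x-q|$ has $L^p$ norms bounded uniformly in $q$ on a fixed chart.

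From this, $W\mapsto\mathcal E_N(W)$ is continuous. If $W_k\to W$, then $V_{W_k}\to V_W$ pointwise off the nodes of $W$, and the uniform $L^3$ bound makes $\{V_{W_k}^2\}$ uniformly integrable, so Vitali applies. Continuity of $B_\beta$ and of $F_\infty$ was already established in \Cref{prop:theta-beta-endpoints}. Hence all three argmin sets are nonempty and compact. Positivity of $e_N$ follows from \Cref{thm:theta-uniform-noncollapse}(i) and Cauchy--Schwarz: $\mathcal E_N(W)\ge\|V_W\|_1^2\ge a_X^2$.

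For the small-$\beta$ statements I write
\[
 e^{\beta V}-1-\beta V-\tfrac{\beta^2}{2}V^2=R,\qquad |R|\le\tfrac{\beta^3}{6}|V|^3e^{\beta V_+}.
\]
Using $\int V_W\,d\mu=0$ and the two bounds above, this gives, for $0<\beta\le1$ and uniformly in $W$,
\[
 \Bigl|B_\beta(W;X)-1-\tfrac{\beta^2}{2}\mathcal E_N(W)\Bigr|\le C_{X,N}\beta^3 .
\]
Taking infima gives \eqref{eq:theta-small-beta-expansion}. Then \eqref{eq:theta-root-small-beta-expansion} follows from $\log(1+x)=x+O(x^2)$ after dividing by $\beta$. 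For $W\in\mathfrak M_{N,\beta}$, I chain
\[
 1+\tfrac{\beta^2}{2}\mathcal E_N(W)-C\beta^3\le B_\beta(W;X)=\Theta_{N,\beta}\le 1+\tfrac{\beta^2}{2}e_N+C\beta^3,
\]
which yields \eqref{eq:small-beta-objective-gap} with constant $4C$. Selection \eqref{eq:small-beta-minimizer-selection} is then by contradiction. Suppose $\beta_k\downarrow0$ and $W_k\in\mathfrak M_{N,\beta_k}$ satisfy $\dist(W_k,\mathfrak M^{(2)}_N)\ge\varepsilon$. A subsequence converges to some $W_*$, and continuity of $\mathcal E_N$ together with the gap bound gives $\mathcal E_N(W_*)=e_N$, which is a contradiction.

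For large $\beta$, $F_\beta\le F_\infty$ pointwise on $X^N$, and $F_\beta$ increases in $\beta$ by power means. As in \Cref{prop:theta-beta-endpoints}, Dini's theorem on the compact $X^N$ gives uniform convergence $F_\beta\to F_\infty$, i.e.\ $\omega_N(\beta)\to0$ (monotonicity handles real $\beta$). For $W\in\mathfrak M_{N,\beta}$, note that $F_\beta(W)=\Theta_{N,\beta}^{1/\beta}\le\Theta_{N,\infty}$ by \Cref{prop:theta-beta-monotone}, so
\[
 F_\infty(W)\le F_\beta(W)+\omega_N(\beta)\le\Theta_{N,\infty}+\omega_N(\beta),
\]
which is \eqref{eq:large-beta-objective-gap}. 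Selection \eqref{eq:large-beta-minimizer-selection} follows from the same compactness argument, using continuity of $F_\infty$.

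The only genuinely delicate step is the uniformity in $W$ of the $L^p$ moment bounds and of the continuity of $\mathcal E_N$, including at colliding configurations. Everything else is soft. I would handle this step entirely through the pointwise domination $|V_W|\le\sum_j|U_X(\cdot,w_j)|$ and the chart-uniform $L^p$ bounds for $\log|x-q|$.
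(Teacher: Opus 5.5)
Your proposal is correct and follows the paper's proof essentially step for step: the uniform bounds $V_W\le NM_X$ and $\|V_W\|_{L^3}\le C_{X,N}$, the cubic Taylor remainder giving the uniform small-$\beta$ expansion, sandwiching at a minimizer to get the objective gap, compactness for selection, and Dini on $X^N$ for the large-$\beta$ endpoint. The only differences are local, and both are valid. You obtain continuity of $\mathcal E_N$ from Vitali, whereas the paper takes it as the uniform limit of $2\beta^{-2}(B_\beta-1)$. You obtain $e_N>0$ from \Cref{thm:theta-uniform-noncollapse}(i) via Cauchy--Schwarz, which even gives the $N$-uniform bound $e_N\ge a_X^2$, whereas the paper uses a fixed-$N$ argument (a.e.\ vanishing is impossible, plus compact attainment).
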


\begin{proof}
The uniform local logarithmic model for the Green kernel and compactness of
$X$ give
\[
 C_3:=\sup_{w\in X}\|U_X(\,\cdot\,,w)\|_{L^3(X)}<\infty.
\]
Also $M_X:=\sup_{X\times X}U_X<\infty$.  Hence, uniformly for $W\in X^N$,
\[
 \|V_W\|_{L^3}\leq NC_3,
 \qquad V_W\leq NM_X.
\]
For real $t$ one has the Taylor remainder bound
\[
 \left|e^t-1-t-\frac{t^2}{2}\right|
 \leq\frac{|t|^3}{6}e^{t_+},
 \qquad t_+:=\max\{t,0\}.
\]
Apply this with $t=\beta V_W$, use $\int_XV_W\,d\mu=0$, and restrict first
to $0<\beta\leq1$.  The preceding uniform bounds yield
\begin{equation}\label{eq:uniform-small-beta-Taylor}
 \sup_{W\in X^N}
 \left|B_\beta(W;X)-1-\frac{\beta^2}{2}\mathcal E_N(W)\right|
 \leq C_{X,N}\beta^3.
\end{equation}
Consequently
\[
 G_\beta(W):=\frac{2}{\beta^2}\bigl(B_\beta(W;X)-1\bigr)
 \longrightarrow\mathcal E_N(W)
\]
uniformly on $X^N$.  Each $G_\beta$ is continuous, so $\mathcal E_N$ is
continuous; its minimum is therefore attained.  Moreover,
$\mathcal E_N(W)=0$ would make $V_W=0$ almost everywhere, which is
incompatible with its logarithmic singularities.  Compact attainment then
gives $e_N>0$.  Taking infima in
\eqref{eq:uniform-small-beta-Taylor} proves
\eqref{eq:theta-small-beta-expansion}.  Taking the logarithm and dividing
by $\beta$ gives \eqref{eq:theta-root-small-beta-expansion}.

Fix $W_\beta\in\mathfrak M_{N,\beta}$ and
$W^{(2)}\in\mathfrak M_N^{(2)}$.  The lower estimate in
\eqref{eq:uniform-small-beta-Taylor} at $W_\beta$, the upper estimate at
$W^{(2)}$, and the minimality of $W_\beta$ give
\[
 \frac{\beta^2}{2}\bigl(\mathcal E_N(W_\beta)-e_N\bigr)
 \leq2C_{X,N}\beta^3.
\]
After adjusting the constant, this proves
\eqref{eq:small-beta-objective-gap}.  The minimizers of $G_\beta$ are
exactly those of $B_\beta$.  Uniform convergence on the compact space
$X^N$ now proves
\eqref{eq:small-beta-minimizer-selection}.  Indeed, otherwise there would
be $\varepsilon>0$, a sequence $\beta_k\downarrow0$, and
$W_k\in\mathfrak M_{N,\beta_k}$ staying at least $\varepsilon$ from
$\mathfrak M_N^{(2)}$.  After extraction, $W_k\to W_*$.  Minimality and
uniform convergence give
\[
 \mathcal E_N(W_*)\leq\mathcal E_N(W)
 \qquad\text{for every }W\in X^N,
\]
contradicting the distance condition.

For the other endpoint, the proof of \Cref{prop:theta-beta-endpoints} shows
that $F_\beta\to F_\infty$ uniformly on $X^N$ as $\beta\to\infty$; hence
$\omega_N(\beta)\to0$.  The functions $B_\beta$ and $F_\beta$ have the
same minimizers.  If
$W_\infty\in\mathfrak M_{N,\infty}$, then
\[
 F_\infty(W_\beta)
 \leq F_\beta(W_\beta)+\omega_N(\beta)
 \leq F_\beta(W_\infty)+\omega_N(\beta)
 \leq\Theta_{N,\infty}(X)+\omega_N(\beta).
\]
This proves \eqref{eq:large-beta-objective-gap}.  Repeating the preceding
compactness argument with the uniform convergence of $F_\beta$ proves
\eqref{eq:large-beta-minimizer-selection} and completes the proof.
\end{proof}

\begin{proposition}[Gibbs envelope and exponent derivatives]
\label{prop:theta-Gibbs-envelope}
Fix a positive integer $N$ and define
\begin{equation}\label{eq:theta-Gibbs-envelope}
 \mathcal F_N(\beta)
 :=\log\!\left(\Theta_{N,\beta}(X)^{1/\beta}\right)
 =\min_{W\in X^N}\frac1\beta\log B_\beta(W;X),
 \qquad \beta>0.
\end{equation}
As above, write
$\mathfrak M_{N,\beta}=\operatorname*{argmin}_{W\in X^N}B_\beta(W;X)$.
For $W\in X^N$, let $\mathsf G_{\beta,W}$ be the Gibbs probability measure
\begin{equation}\label{eq:theta-Gibbs-measure}
 d\mathsf G_{\beta,W}
 :=\varrho_{\beta,W}\,d\mu,
 \qquad
 \varrho_{\beta,W}
 :=\frac{Y_W^\beta}{B_\beta(W;X)},
\end{equation}
and use the convention $0\log0=0$ in the relative entropy
\[
 \operatorname{Ent}_\mu(\mathsf G_{\beta,W})
 :=\int_X\varrho_{\beta,W}\log\varrho_{\beta,W}\,d\mu.
\]
Then $\mathcal F_N$ is locally Lipschitz on $(0,\infty)$.  Its finite right
and left derivatives exist at every $\beta>0$ and satisfy
\begin{align}
 D^+\mathcal F_N(\beta)
 &:=\lim_{h\downarrow0}
   \frac{\mathcal F_N(\beta+h)-\mathcal F_N(\beta)}{h}
 =\frac1{\beta^2}
   \min_{W\in\mathfrak M_{N,\beta}}
   \operatorname{Ent}_\mu(\mathsf G_{\beta,W}),
 \label{eq:theta-Gibbs-right-derivative}\\
 D^-\mathcal F_N(\beta)
 &:=\lim_{h\downarrow0}
   \frac{\mathcal F_N(\beta)-\mathcal F_N(\beta-h)}{h}
 =\frac1{\beta^2}
   \max_{W\in\mathfrak M_{N,\beta}}
   \operatorname{Ent}_\mu(\mathsf G_{\beta,W}).
 \label{eq:theta-Gibbs-left-derivative}
\end{align}
Every entropy appearing here is strictly positive.  Consequently,
\[
 0<D^+\mathcal F_N(\beta)\leq D^-\mathcal F_N(\beta).
\]
The function $\mathcal F_N$ is differentiable at $\beta$ if and only if all
configurations in $\mathfrak M_{N,\beta}$ have the same Gibbs relative
entropy; at such a point,
\[
 \mathcal F_N'(\beta)
 =\frac{\operatorname{Ent}_\mu(\mathsf G_{\beta,W})}{\beta^2},
 \qquad W\in\mathfrak M_{N,\beta}.
\]
In particular, $\mathcal F_N$ is differentiable almost everywhere.  Moreover,
the map
\[
 \beta\longmapsto\Theta_{N,\beta}(X)^{1/\beta}
\]
is locally Lipschitz and defines a strictly increasing homeomorphism from
$(0,\infty)$ onto $(1,\Theta_{N,\infty}(X))$.
\end{proposition}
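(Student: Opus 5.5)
This is a Danskin-type envelope argument for a minimum of a family of smooth functions of $\beta$ over a compact parameter set. Put
\[
f_W(\beta):=\frac1\beta\log B_\beta(W;X),
\]
so that $\mathcal F_N=\min_W f_W$.

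\emph{Step 1: uniform $C^1$ control of $f_W$.} By \Cref{lem:green-joint-local}, $Y_W$ is jointly continuous on $X\times X^N$ and bounded by $e^{NM_X}$. On a compact interval $[\beta_0,\beta_1]\subset(0,\infty)$, the quantities
\[
Y_W^\beta,\qquad Y_W^\beta\log Y_W=Y_W^\beta V_W
\]
are dominated uniformly in $W$; the second uses $|V_W|\le NC_3$-type logarithmic integrability together with the elementary bound $y^\beta|\log y|\le C_{\beta_0}$ for $y\le e^{NM_X}$. Hence $\beta\mapsto B_\beta(W)$ is $C^1$ with
\[
\partial_\beta B_\beta(W)=\int Y_W^\beta V_W\,d\mu,
\]
and $(W,\beta)\mapsto\partial_\beta f_W(\beta)$ is jointly continuous, by dominated convergence and joint continuity. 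A direct computation gives
\[
\partial_\beta f_W=-\frac{1}{\beta^2}\log B_\beta+\frac1\beta\int\varrho_{\beta,W}V_W\,d\mu .
\]
Since $\log\varrho_{\beta,W}=\beta V_W-\log B_\beta$ wherever $\varrho_{\beta,W}>0$, and the convention $0\log0=0$ absorbs the nodes, this equals
\[
\partial_\beta f_W=\frac{1}{\beta^2}\operatorname{Ent}_\mu(\mathsf G_{\beta,W}).
\]
Uniform boundedness of $\partial_\beta f_W$ on compact $\beta$-intervals gives local Lipschitz continuity of $\mathcal F_N$ as an infimum of uniformly Lipschitz functions.

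\emph{Step 2: Danskin's theorem for the one-sided derivatives.} For the right derivative at $\beta$, the upper bound comes from fixing $W\in\mathfrak M_{N,\beta}$:
\[
\mathcal F_N(\beta+h)-\mathcal F_N(\beta)\le f_W(\beta+h)-f_W(\beta)=h\,\partial_\beta f_W(\beta)+o(h),
\]
so $\limsup$ of the difference quotient is at most the minimum over $\mathfrak M_{N,\beta}$. For the lower bound, take $W_h\in\mathfrak M_{N,\beta+h}$. By the mean value theorem,
\[
\mathcal F_N(\beta+h)-\mathcal F_N(\beta)\ge f_{W_h}(\beta+h)-f_{W_h}(\beta)=h\,\partial_\beta f_{W_h}(\xi_h).
\]
Cluster points of $W_h$ lie in $\mathfrak M_{N,\beta}$ by continuity of $(W,\beta)\mapsto f_W(\beta)$ and compactness. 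Joint continuity of $\partial_\beta f$ then gives $\liminf\ge\min_{\mathfrak M_{N,\beta}}\partial_\beta f_W(\beta)$. The left derivative is symmetric: the signs flip, so the maximum appears. This step is where I expect the only real care to be needed, namely the uniform domination justifying joint continuity of the entropy in $(W,\beta)$ near nodes and collisions. I would handle it through the bound $|y^\beta\log y|\le C$ on $[0,e^{NM_X}]$ for $\beta\ge\beta_0$, since that covers configurations with collisions as well.

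\emph{Step 3: the remaining conclusions.}
\begin{itemize}
\item Positivity of the entropy: $\varrho_{\beta,W}$ is not a.e.\ constant because $Y_W$ vanishes at the nodes and is continuous, so strict Jensen applies to the convex function $x\log x$.
\item Differentiability: $\mathcal F_N$ is differentiable at $\beta$ iff the minimum and maximum of the entropy over $\mathfrak M_{N,\beta}$ agree, i.e.\ all active minimizers share the same entropy. Almost-everywhere differentiability follows from Rademacher's theorem, or from local Lipschitzness.
\item Homeomorphism onto $(1,\Theta_{N,\infty})$: $\exp\mathcal F_N$ is continuous and strictly increasing by \Cref{prop:theta-beta-monotone}. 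Its endpoint limits are $1$ and $\Theta_{N,\infty}$ by \Cref{prop:theta-beta-endpoints}. A continuous strictly increasing map has an open interval as image, so it is a homeomorphism onto that interval.

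\end{itemize}
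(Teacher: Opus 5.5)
Your proposal is correct and follows the paper's proof essentially step for step. The shared steps are: uniform $C^1$ control of $\beta\mapsto\frac1\beta\log B_\beta(W;X)$ from the joint continuity of $(\beta,y)\mapsto y^\beta\log y$ on $[a,b]\times[0,R_N]$; the entropy formula for the slope; the Danskin-type argument extracting convergent minimizers $W_k\in\mathfrak M_{N,\beta\pm h_k}$; strict Jensen for positivity; and \Cref{prop:theta-beta-monotone,prop:theta-beta-endpoints} for the homeomorphism. The only blemish is the appeal to $L^3$-type integrability of $V_W$, which is unnecessary (and false if read as a pointwise bound $|V_W|\le NC_3$), because the bound on $y^\beta|\log y|$ over $[0,e^{NM_X}]$ already dominates the integrand by a constant.
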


\begin{proof}
The proof has three steps.  We first justify differentiation with respect to
$\beta$ uniformly over the compact configuration space, including at
colliding nodes.  We then differentiate the minimum envelope without
choosing a continuous minimizing branch.  Finally, relative-entropy
positivity and the already established endpoint limits give the remaining
claims.

Put
\[
 \ell(\beta,W):=\frac1\beta\log B_\beta(W;X),
 \qquad
 R_N:=\max_{(z,W)\in X\times X^N}Y_W(z)<\infty.
\]
The function $(z,W)\mapsto Y_W(z)$ is continuous and nonnegative on the
compact space $X\times X^N$.  Fix a compact interval
$[a,b]\Subset(0,\infty)$.  The maps
\[
 (\beta,y)\longmapsto y^\beta,
\qquad
 (\beta,y)\longmapsto y^\beta\log y
\]
are continuous on $[a,b]\times[0,R_N]$ when the second is assigned the
value zero at $y=0$.  Indeed, for $0<y\leq1$,
\[
 \sup_{\beta\in[a,b]}|y^\beta\log y|
 \leq y^a|\log y|\longrightarrow0
 \qquad (y\downarrow0).
\]
It follows that
\[
 B_\beta(W;X)=\int_XY_W^\beta\,d\mu,
 \qquad
 \partial_\beta B_\beta(W;X)
 =\int_XY_W^\beta\log Y_W\,d\mu
\]
and that both functions on the right are jointly continuous in
$(\beta,W)\in[a,b]\times X^N$.  Since $B_\beta(W;X)\geq1$ by Jensen's
inequality, $\ell$ is continuously differentiable in $\beta$, uniformly
over $W$ on compact $\beta$-intervals.  Direct calculation gives
\begin{align}
 \partial_\beta\ell(\beta,W)
 &=-\frac{\log B_\beta(W;X)}{\beta^2}
   +\frac1\beta
    \frac{\int_XY_W^\beta\log Y_W\,d\mu}
         {B_\beta(W;X)}\notag\\
 &=\frac1{\beta^2}
   \operatorname{Ent}_\mu(\mathsf G_{\beta,W}).
 \label{eq:Gibbs-entropy-slope}
\end{align}
This derivative is jointly continuous.  It is therefore uniformly bounded
on $[a,b]\times X^N$, so the functions
$\ell(\,\cdot\,,W)$ are equi-Lipschitz there.  Their minimum
$\mathcal F_N$ is consequently locally Lipschitz.

We next prove the one-sided formulas without assuming a differentiable
choice of minimizer.  Fix $\beta>0$.  For every
$W\in\mathfrak M_{N,\beta}$ and $h>0$,
\[
 \mathcal F_N(\beta+h)-\mathcal F_N(\beta)
 \leq\ell(\beta+h,W)-\ell(\beta,W).
\]
Taking the upper limit, then minimizing over the active set, and using
\eqref{eq:Gibbs-entropy-slope} gives the upper bound in
\eqref{eq:theta-Gibbs-right-derivative}.  Conversely, choose
$h_k\downarrow0$ so that the right difference quotient converges to its
lower limit, and choose
$W_k\in\mathfrak M_{N,\beta+h_k}$.  After passing to a subsequence,
$W_k\to W_*$.  Continuity of $\mathcal F_N$ and joint continuity of $\ell$
give
\[
 \ell(\beta,W_*)=\mathcal F_N(\beta),
\]
so $W_*\in\mathfrak M_{N,\beta}$.  Moreover,
\begin{align*}
 \frac{\mathcal F_N(\beta+h_k)-\mathcal F_N(\beta)}{h_k}
 &\geq
 \frac{\ell(\beta+h_k,W_k)-\ell(\beta,W_k)}{h_k}\\
 &=\int_0^1
   \partial_\beta\ell(\beta+t h_k,W_k)\,dt
 \longrightarrow\partial_\beta\ell(\beta,W_*).
\end{align*}
The limiting slope is at least the minimum over the active set.  This proves
the reverse lower bound and hence the genuine right derivative in
\eqref{eq:theta-Gibbs-right-derivative}.

For the left derivative, testing first with every
$W\in\mathfrak M_{N,\beta}$ gives
\[
 \liminf_{h\downarrow0}
 \frac{\mathcal F_N(\beta)-\mathcal F_N(\beta-h)}{h}
 \geq
 \max_{W\in\mathfrak M_{N,\beta}}
 \partial_\beta\ell(\beta,W).
\]
For the reverse inequality, choose $h_k\downarrow0$ so that the left
difference quotient converges to its upper limit, take
$W_k\in\mathfrak M_{N,\beta-h_k}$, and extract a convergent subsequence.  The
limit, say $W_k\to W_*$, again belongs to
$\mathfrak M_{N,\beta}$.  Indeed, continuity gives
$\ell(\beta,W_*)=\mathcal F_N(\beta)$.  Moreover,
\begin{align*}
 \frac{\mathcal F_N(\beta)-\mathcal F_N(\beta-h_k)}{h_k}
 &\leq
 \frac{\ell(\beta,W_k)-\ell(\beta-h_k,W_k)}{h_k}\\
 &=\int_0^1
   \partial_\beta\ell(\beta-h_k+t h_k,W_k)\,dt
 \longrightarrow\partial_\beta\ell(\beta,W_*).
\end{align*}
The last slope is at most the maximum over the active set.  This proves the
reverse upper bound and hence
\eqref{eq:theta-Gibbs-left-derivative}.

The relative entropy is nonnegative by the strict convexity of
$t\mapsto t\log t$ and the identity
$\int_X\varrho_{\beta,W}\,d\mu=1$.  Equality would force
$\varrho_{\beta,W}=1$ almost everywhere.  Since this density is continuous
and $\mu$ has full support, it would then equal one everywhere.  This is
impossible because $Y_W$, and hence $\varrho_{\beta,W}$, vanishes at every
node.  Thus every active entropy is positive.  It depends continuously on
$W$, while $\mathfrak M_{N,\beta}$ is compact, so its minimum on the active
set is strictly positive.

The differentiability criterion follows by comparing the minimum and
maximum in \eqref{eq:theta-Gibbs-right-derivative}--%
\eqref{eq:theta-Gibbs-left-derivative}; almost-everywhere differentiability
follows from local Lipschitz continuity.  Finally,
\Cref{prop:theta-beta-monotone,prop:theta-beta-endpoints} give strict
monotonicity and the two endpoint limits.  Together with local Lipschitz
continuity, they show that
$\beta\mapsto\Theta_{N,\beta}(X)^{1/\beta}$ has the asserted homeomorphism
property.
\end{proof}

\begin{remark}[Scope of the exponent calculus]
All assertions in \Cref{prop:theta-Gibbs-envelope} are for one fixed $N$.
The proof gives no Lipschitz or entropy bound uniform in $N$, no uniqueness
of a minimizing configuration, and no continuous selection
$\beta\mapsto W_\beta$.  Active minimizing branches may exchange, so the
envelope is asserted to be locally Lipschitz and almost everywhere
differentiable, not $C^1$.
\end{remark}

One further structural relation is available without any information about
minimizers: concatenating configurations of different cardinalities adds their
potentials, and H\"older separates the resulting exponentials.

\begin{proposition}[Concatenation inequality]
\label{prop:theta-concatenation}
For all positive integers $N,M$, every $\beta>0$, and conjugate exponents
$r,s>1$ with $r^{-1}+s^{-1}=1$,
\begin{equation}\label{eq:theta-concatenation}
 \Theta_{N+M,\beta}(X)
 \leq\Theta_{N,r\beta}(X)^{1/r}
       \Theta_{M,s\beta}(X)^{1/s}.
\end{equation}
\end{proposition}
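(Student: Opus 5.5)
The plan is to concatenate two optimal configurations and separate the resulting exponential with H\"older's inequality.

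Choose minimizers $W'\in\mathfrak M_{N,r\beta}$ and $W''\in\mathfrak M_{M,s\beta}$. Both exist because $B_\gamma(\,\cdot\,;X)$ is continuous on the compact spaces $X^N$ and $X^M$, as recorded in the proof of \Cref{prop:theta-beta-endpoints}. If one prefers to avoid attainment, near-minimizers within $\varepsilon$ work equally well, followed by $\varepsilon\to0$.

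Let $W=(W',W'')\in X^{N+M}$ be the concatenated configuration. The potential is additive over the nodes, so
\[
 V_W=V_{W'}+V_{W''},
 \qquad
 e^{\beta V_W}=e^{\beta V_{W'}}\,e^{\beta V_{W''}} .
\]

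Next, apply H\"older's inequality on the probability space $(X,\mu)$ with the conjugate exponents $r,s$:
\[
 B_\beta(W;X)=\int_Xe^{\beta V_{W'}}e^{\beta V_{W''}}\,d\mu
 \leq\Bigl(\int_Xe^{r\beta V_{W'}}\,d\mu\Bigr)^{1/r}
     \Bigl(\int_Xe^{s\beta V_{W''}}\,d\mu\Bigr)^{1/s}.
\]
The right-hand side equals $B_{r\beta}(W';X)^{1/r}B_{s\beta}(W'';X)^{1/s}$. By the choice of $W'$ and $W''$, this is $\Theta_{N,r\beta}(X)^{1/r}\Theta_{M,s\beta}(X)^{1/s}$.

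Finally, $\Theta_{N+M,\beta}(X)\leq B_\beta(W;X)$ by definition of the infimum. Combining this with the H\"older bound gives \eqref{eq:theta-concatenation}.

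There is no real obstacle. The only points to check are that every integrand is nonnegative and measurable, and that each integral is finite; both hold because $V\leq NM_X$ is bounded above. The product $e^{\beta V_W}$ is well defined at the nodes, where it vanishes, so H\"older applies without qualification.
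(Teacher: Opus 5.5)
Your proposal is correct and follows essentially the same route as the paper: concatenate configurations, split $e^{\beta(V_{W'}+V_{W''})}$ by H\"older with exponents $r,s$, and pass to infima. The only difference is that you fix minimizers (or $\varepsilon$-near-minimizers), whereas the paper takes the infimum over $W'$ and $W''$ independently after the H\"older step; the two are interchangeable.
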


\begin{proof}
Given configurations $W\in X^N$ and $W'\in X^M$, concatenate them.  Their
Green potentials add.  Writing $V_W=\sum_{w\in W}U_X(\,\cdot\,,w)$ and
$V_{W'}=\sum_{w'\in W'}U_X(\,\cdot\,,w')$, one has pointwise
\[
 e^{\beta(V_W+V_{W'})}
 =\bigl(e^{r\beta V_W}\bigr)^{1/r}
  \bigl(e^{s\beta V_{W'}}\bigr)^{1/s}.
\]
H\"older's inequality therefore gives
\[
 B_\beta(W\cup W';X)
 \leq B_{r\beta}(W;X)^{1/r}B_{s\beta}(W';X)^{1/s}.
\]
First take the infimum independently over $W$ and $W'$, and then use the
definition of $\Theta_{N+M,\beta}$.
\end{proof}

\subsection{An exact two-point model on the round sphere}
\label{subsec:sphere-two-point-model}

The two general variational problems admit a complete first nontrivial
fixed-cardinality solution.  Equip $\bP^1$ with the unitary-invariant
probability measure $\mu_{\rm FS}$.  For unit lifts
$\zeta,\omega\in\C^2$, set
\[
 d_{\rm ch}([\zeta],[\omega]):=|\det(\zeta,\omega)|,
 \qquad
 d_{\rm ch}([\zeta],[\omega])^2
 =1-|\langle\zeta,\omega\rangle|^2.
\]
Let $\mathfrak a(p)$ denote the spherical antipode of $p$, namely the
orthogonal complex line.  With the conventions of
\eqref{eq:green-normalization}, the normalized Green kernel is
\begin{equation}\label{eq:sphere-two-point-green}
 U_{\rm FS}(z,w)=\frac12+\log d_{\rm ch}(z,w).
\end{equation}
Indeed, $d_{\rm ch}(z,w)^2$ is uniformly distributed on $[0,1]$ for
$z\sim\mu_{\rm FS}$, so the logarithmic term has mean $-1/2$.

For $p,q\in\bP^1$, put
\begin{equation}\label{eq:sphere-two-point-s}
 s:=d_{\rm ch}(q,\mathfrak a(p))^2
   =1-d_{\rm ch}(p,q)^2\in[0,1].
\end{equation}
Thus $s=0$ exactly for antipodal pairs and $s=1$ exactly for collisions.

\begin{lemma}[A correlated complex-Gaussian moment]
\label{lem:sphere-correlated-gaussian}
Let $a>0$, and let $X,Y$ be centered standard complex Gaussian variables
with
\[
 \mathbb E(X\overline{Y})=c,
 \qquad |c|^2=s\in[0,1].
\]
Then
\begin{equation}\label{eq:sphere-correlated-gaussian}
 \mathbb E\bigl(|X|^{2a}|Y|^{2a}\bigr)
 =\Gamma(1+a)^2\,{}_2F_1(-a,-a;1;s).
\end{equation}
\end{lemma}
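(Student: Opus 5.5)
We will reduce to an explicit two-dimensional Gaussian computation: decompose $Y$ along $X$, expand, and integrate term by term. Since only $|c|$ matters (rotate $Y$ by a unimodular constant), we may assume $c=\sqrt s\ge0$. We write
\[
 Y=\sqrt s\,X+\sqrt{1-s}\,Z,
\]
with $Z$ a standard complex Gaussian independent of $X$. Then $\mathbb E(X\overline Y)=\sqrt s$ and $\mathbb E|Y|^2=1$, so $(X,Y)$ has the required joint law. In the endpoint cases $s=0$ (independence, value $\Gamma(1+a)^2$) and $s=1$ ($Y=X$, value $\mathbb E|X|^{4a}=\Gamma(1+2a)$) the formula can be checked directly against ${}_2F_1(-a,-a;1;1)=\Gamma(1+2a)/\Gamma(1+a)^2$ by Gauss's summation theorem. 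This endpoint check also serves as a consistency test.

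For $0<s<1$ we condition on $X$. Given $X=x$, the variable $Y$ is complex Gaussian with mean $\sqrt s\,x$ and variance $1-s$. The conditional moment $\mathbb E(|Y|^{2a}\mid X=x)$ is a noncentral complex Gaussian moment, which is classical:
\[
 \mathbb E|m+\sigma Z|^{2a}=\sigma^{2a}\Gamma(1+a)\,{}_1F_1\bigl(-a;1;-|m|^2/\sigma^2\bigr).
\]
This follows from expanding $e^{-|y-m|^2/\sigma^2}$, integrating in polar coordinates, and applying Kummer's transformation. With $m=\sqrt s\,x$ and $\sigma^2=1-s$, we then multiply by $|x|^{2a}$ and integrate against $e^{-|x|^2}dA/\pi$, i.e.\ against $e^{-u}du$ in $u=|x|^2$. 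We use the series form
\[
 {}_1F_1\bigl(-a;1;-\lambda\bigr)=e^{-\lambda}{}_1F_1(1+a;1;\lambda),
\]
which has positive terms, so Tonelli justifies termwise integration. Each term produces $\int_0^\infty u^{a+k}e^{-u/(1-s)}du=\Gamma(1+a+k)(1-s)^{1+a+k}$. Collecting terms gives
\[
 (1-s)^{2a+1}\Gamma(1+a)^2\,{}_2F_1(1+a,1+a;1;s).
\]
Euler's transformation ${}_2F_1(A,B;C;s)=(1-s)^{C-A-B}{}_2F_1(C-A,C-B;C;s)$ with $C-A-B=-1-2a$ converts this to $\Gamma(1+a)^2\,{}_2F_1(-a,-a;1;s)$, as claimed. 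The case $s=1$ then also follows by continuity in $s$, since both sides are continuous on $[0,1]$ (dominated convergence on the left, Gauss's theorem and Abel summation on the right).

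The main obstacle is the bookkeeping in the middle step: getting the exact powers of $1-s$ and the Pochhammer factors right so that the double sum recognisably becomes a ${}_2F_1$. As a more symmetric alternative, we would expand $e^{2\mathrm{Re}(\cdot)}$ in the joint density in angular Fourier modes. Only the zeroth mode survives the angular integration, giving a Bessel series $\sum_k s^k|x|^{2k}|y|^{2k}/(k!)^2(1-s)^{2k}$. Radial integration then yields $\Gamma(1+a+k)^2/(k!)^2$ directly, i.e.\ ${}_2F_1(1+a,1+a;1;s)$ times $(1-s)^{1+2a}$, and Euler's transformation finishes as before.
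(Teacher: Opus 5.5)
Your proposal is correct and is essentially the paper's argument. Conditioning on $X$ and using the noncentral moment only factors the same $I_0$-series computation; the Kummer transformation you apply is undone in the next step. You reach the identical intermediate $\Gamma(1+a)^2(1-s)^{2a+1}\,{}_2F_1(1+a,1+a;1;s)$ and finish with Euler's transformation and Gauss's value at $s=1$, and your ``more symmetric alternative'' via the Bessel series of the joint density is exactly the route the paper takes.
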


\begin{proof}
For $0\le s<1$, rotate $Y$ so that $c=\sqrt s$.  If
$u=|X|^2$ and $v=|Y|^2$, their joint density is
\[
 \frac1{1-s}\exp\!\left(-\frac{u+v}{1-s}\right)
 I_0\!\left(\frac{2\sqrt{suv}}{1-s}\right),
 \qquad u,v>0,
\]
where
$I_0(t)=\sum_{k\ge0}(t^2/4)^k/(k!)^2$.  Tonelli's theorem and the gamma
integral give
\[
 \mathbb E(u^av^a)
 =\Gamma(1+a)^2(1-s)^{2a+1}
   {}_2F_1(1+a,1+a;1;s).
\]
Euler's transformation \cite[Eq.~15.8.1]{DLMF} gives
\[
 (1-s)^{2a+1}{}_2F_1(1+a,1+a;1;s)
 ={}_2F_1(-a,-a;1;s).
\]
If $s=1$, then $Y$ is a unit-modulus multiple of $X$; equivalently, one may use
Gauss' value at one \cite[Eq.~15.4.20]{DLMF}.
\end{proof}

\begin{theorem}[Exact two-point Green packing on $\bP^1$]
\label{thm:sphere-two-point-exact}
For every $\beta>0$, put $a=\beta/2$.  For $p,q\in\bP^1$ and $s$ as in
\eqref{eq:sphere-two-point-s},
\begin{equation}\label{eq:sphere-two-point-exact}
 B_\beta((p,q);\bP^1)
 =e^\beta\frac{\Gamma(1+a)^2}{\Gamma(2+2a)}
   {}_2F_1(-a,-a;1;s).
\end{equation}
Consequently:
\begin{enumerate}[label=\textup{(\roman*)}]
\item the minimizing pairs are exactly the antipodal pairs
$q=\mathfrak a(p)$, unique modulo $\mathrm{PSU}(2)$ and permutation;
\item
\begin{equation}\label{eq:sphere-two-point-theta}
 \Theta_{2,\beta}(\bP^1)
 =e^\beta\frac{\Gamma(1+\beta/2)^2}{\Gamma(2+\beta)};
\end{equation}
\item the collision value is
\begin{equation}\label{eq:sphere-two-point-collision}
 B_\beta((p,p);\bP^1)=\frac{e^\beta}{1+\beta};
\end{equation}
\item the global stability estimate
\begin{equation}\label{eq:sphere-two-point-stability}
 \frac{\beta^2}{4}s
 \le
 \frac{B_\beta((p,q);\bP^1)}{\Theta_{2,\beta}(\bP^1)}-1
 \le
 \left(
  \frac{\Gamma(1+\beta)}{\Gamma(1+\beta/2)^2}-1
 \right)s
\end{equation}
holds for every pair.
\end{enumerate}
\end{theorem}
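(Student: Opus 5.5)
The plan is to write $B_\beta$ as a Gaussian moment and then apply \Cref{lem:sphere-correlated-gaussian}.

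\emph{Gaussian representation.} By \eqref{eq:sphere-two-point-green},
\[
 B_\beta((p,q);\bP^1)=e^{\beta}\int_{\bP^1}d_{\rm ch}(z,p)^{\beta}\,d_{\rm ch}(z,q)^{\beta}\,d\mu_{\rm FS}(z).
\]
Let $G$ be a standard complex Gaussian vector in $\C^2$. Then $[G]$ has law $\mu_{\rm FS}$, and $|G|$ is independent of $[G]$ by unitary invariance together with the polar decomposition. For unit lifts $\zeta_p,\zeta_q$,
\[
 |\det(G,\zeta_p)|=|G|\,d_{\rm ch}([G],p),
\]
and similarly for $q$. Put $X=\det(G,\zeta_p)$ and $Y=\det(G,\zeta_q)$, and write $a=\beta/2$. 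Independence then gives
\[
 \mathbb E\bigl(|X|^{2a}|Y|^{2a}\bigr)=\mathbb E\bigl(|G|^{2\beta}\bigr)\int d_{\rm ch}(z,p)^{\beta}d_{\rm ch}(z,q)^{\beta}\,d\mu_{\rm FS}(z).
\]
Since $|G|^2\sim\Gamma(2,1)$, we have $\mathbb E(|G|^{2\beta})=\Gamma(2+\beta)=\Gamma(2+2a)$.

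\emph{Identifying the Gaussians.} The maps $G\mapsto\det(G,\zeta)$ are linear functionals given by unit vectors, namely $\zeta^\perp$ up to conjugation. Hence $X$ and $Y$ are standard complex Gaussians. Their covariance is an inner product of the orthogonal complements, so
\[
 |\mathbb E(X\overline Y)|^2=|\langle\zeta_p,\zeta_q\rangle|^2=1-d_{\rm ch}(p,q)^2=s.
\]
Applying \Cref{lem:sphere-correlated-gaussian} yields \eqref{eq:sphere-two-point-exact}. I expect this covariance bookkeeping to be the main place needing care: the modulus of the correlation must be $\sqrt s$ and not $\sqrt{1-s}$. The endpoints check it: for a collision, $X=Y$, and this case must match $s=1$.

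\emph{Consequences.} Write
\[
 {}_2F_1(-a,-a;1;s)=\sum_{k\ge0}c_ks^k,\qquad c_k=\frac{((-a)_k)^2}{(k!)^2}\ge0,
\]
with $c_0=1$ and $c_1=a^2>0$. The function is therefore strictly increasing on $[0,1]$. Its minimum is uniquely at $s=0$, that is, at antipodal pairs. This gives (i), using that $\mathrm{PSU}(2)$ acts transitively on antipodal pairs, and it gives the value (ii).

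At $s=1$, Gauss's summation gives ${}_2F_1(-a,-a;1;1)=\Gamma(1+2a)/\Gamma(1+a)^2$. Substituting, the collision value is
\[
 e^\beta\,\Gamma(1+\beta)/\Gamma(2+\beta)=e^\beta/(1+\beta),
\]
which is (iii).

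For (iv), the ratio to $\Theta_{2,\beta}$ minus one equals $\sum_{k\ge1}c_ks^k$. Keeping only the $k=1$ term bounds it below by $a^2s=\beta^2s/4$. Using $s^k\le s$ for $s\in[0,1]$ bounds it above by
\[
 \Bigl(\sum_{k\ge1}c_k\Bigr)s=\bigl({}_2F_1(-a,-a;1;1)-1\bigr)s,
\]
which is the stated constant by Gauss's value.
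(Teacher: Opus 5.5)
Your proposal is correct and follows the paper's proof essentially step by step. Both arguments use the polar decomposition of a standard complex Gaussian in $\C^2$, identify $\det(G,\zeta_p)$ and $\det(G,\zeta_q)$ as standard complex Gaussians with squared correlation $s$, and combine \Cref{lem:sphere-correlated-gaussian} with $\mathbb E|G|^{2\beta}=\Gamma(2+\beta)$; they then read (i)--(iv) off the nonnegative hypergeometric coefficients and Gauss's value at $s=1$. Your explicit check that the correlation is $|\langle\zeta_p,\zeta_q\rangle|$, and not its complement, is exactly the bookkeeping the paper compresses into $|\langle\ell_p,\ell_q\rangle|^2=s$.
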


\begin{proof}
The computation is a Gaussian one.  Writing a standard complex Gaussian vector as
radius times Fubini--Study direction makes the radius independent of the
direction, so the spherical integral becomes a ratio of Gaussian moments.  The two
linear forms vanishing at $p$ and $q$ then become correlated complex Gaussians,
and their squared correlation is exactly the parameter $s$.

Choose unit linear forms $\ell_p,\ell_q$ vanishing at $p,q$.  Let $G$ be a
standard complex Gaussian vector in $\C^2$ and write $G=R\Xi$.  The random
variables $R$ and $[\Xi]$ are independent, $[\Xi]\sim\mu_{\rm FS}$, and
$R^2$ has the gamma distribution of shape two.  Moreover,
\[
 d_{\rm ch}([\Xi],p)=|\ell_p(\Xi)|,
 \qquad
 d_{\rm ch}([\Xi],q)=|\ell_q(\Xi)|,
 \qquad
 |\langle\ell_p,\ell_q\rangle|^2=s.
\]
Thus \Cref{lem:sphere-correlated-gaussian} and
$\mathbb E(R^{4a})=\Gamma(2+2a)$ give
\[
 \int_{\bP^1}d_{\rm ch}(z,p)^{2a}d_{\rm ch}(z,q)^{2a}
 \,d\mu_{\rm FS}(z)
 =\frac{\Gamma(1+a)^2}{\Gamma(2+2a)}
  {}_2F_1(-a,-a;1;s).
\]
Together with \eqref{eq:sphere-two-point-green}, this is
\eqref{eq:sphere-two-point-exact}.

Now
\[
 {}_2F_1(-a,-a;1;s)
 =\sum_{k=0}^\infty\frac{(-a)_k^2}{(k!)^2}s^k.
\]
All coefficients are nonnegative and the coefficient of $s$ is $a^2>0$.
Hence the function is strictly increasing on $[0,1]$, which proves the
classification and \eqref{eq:sphere-two-point-theta}.  At $s=1$, Gauss'
value at one \cite[Eq.~15.4.20]{DLMF} gives
\[
 {}_2F_1(-a,-a;1;1)
 =\frac{\Gamma(1+2a)}{\Gamma(1+a)^2},
\]
and hence \eqref{eq:sphere-two-point-collision}.  The first nonconstant
series term gives the lower bound in
\eqref{eq:sphere-two-point-stability}; using $s^k\le s$ for $k\ge1$ and the
preceding value at one gives the upper bound.
\end{proof}

\begin{corollary}[Morse--Bott nondegeneracy]
\label{cor:sphere-two-point-morse-bott}
Fix an antipodal pair $(p,\mathfrak a(p))$ and keep $p$ fixed.  If the second
point is moved through angular distance $\varepsilon$ from
$\mathfrak a(p)$, then
\begin{equation}\label{eq:sphere-two-point-local-expansion}
 B_\beta((p,q_\varepsilon);\bP^1)
 =\Theta_{2,\beta}(\bP^1)
 \left(1+\frac{\beta^2}{16}\varepsilon^2
       +O_\beta(\varepsilon^4)\right).
\end{equation}
The antipodal critical manifold is Morse--Bott.  For the product of the
unit-round metrics on $S^2\times S^2$, its two nonzero normal Hessian
eigenvalues are
\begin{equation}\label{eq:sphere-two-point-normal-hessian}
 \frac{\beta^2}{4}\Theta_{2,\beta}(\bP^1).
\end{equation}
\end{corollary}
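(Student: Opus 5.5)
Both claims follow from the exact formula \eqref{eq:sphere-two-point-exact}; what remains is to express $s$ as a function of the angular displacement and to account for the symmetries. We use the unit-round metric on $S^2$, so that the Fubini--Study chordal distance is half the Euclidean chordal distance. At angular distance $\theta$ this gives $d_{\rm ch}=\sin(\theta/2)$. If $q_\varepsilon$ lies at angular distance $\varepsilon$ from $\mathfrak a(p)$, then
\[
 s=d_{\rm ch}(q_\varepsilon,\mathfrak a(p))^2=\sin^2(\varepsilon/2)=\frac{\varepsilon^2}{4}+O(\varepsilon^4).
\]
By \Cref{thm:sphere-two-point-exact},
\[
 \frac{B_\beta((p,q_\varepsilon))}{\Theta_{2,\beta}}={}_2F_1(-a,-a;1;s)=1+a^2s+O(s^2),
 \qquad a=\frac\beta2,
\]
and the series converges absolutely on $[0,1]$. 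Substituting $s$ gives $1+\frac{\beta^2}{4}\cdot\frac{\varepsilon^2}{4}+O_\beta(\varepsilon^4)$, which is \eqref{eq:sphere-two-point-local-expansion}.

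For the Morse--Bott statement we describe the critical manifold and then compute the Hessian in normal directions. The antipodal set $\mathcal A=\{(p,\mathfrak a(p))\}$ is the graph of the antipodal isometry, so it is a smooth $2$-dimensional submanifold of $S^2\times S^2$. Since $B_\beta$ depends smoothly on $s$, and $s$ is smooth on $S^2\times S^2$, the function is smooth near $\mathcal A$ and attains its minimum along $\mathcal A$; hence every point of $\mathcal A$ is critical. The Hessian vanishes on the tangent space $T\mathcal A=\{(v,d\mathfrak a\,v)\}$. For the normal directions, write
\[
 B_\beta=\Theta_{2,\beta}\,F(s),
 \qquad F(s)={}_2F_1(-a,-a;1;s).
\]
On $\mathcal A$ we have $s=0$ and $ds=0$, because $s\ge0$ attains its minimum there. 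Therefore
\[
 \operatorname{Hess}B_\beta=\Theta_{2,\beta}\,F'(0)\operatorname{Hess}s=\Theta_{2,\beta}\,a^2\operatorname{Hess}s.
\]
Next we compute $\operatorname{Hess}s$ at $\mathcal A$. Write $s=\cos^2(\theta/2)=\frac{1+\langle x,y\rangle}{2}$, where $x,y\in S^2\subset\R^3$ are unit vectors and $\theta$ is the angle between $p$ and $q$. At $y=-x$, take tangent vectors $(u,v)\in T_xS^2\times T_yS^2$ and move along the geodesics $x\cos t+u\sin t$ and $y\cos t+v\sin t$, with $|u|=|v|=1$ along the chosen direction. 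Expanding,
\[
 \langle x(t),y(t)\rangle=-1+t^2\bigl(1+\langle u,v\rangle\bigr)+O(t^3).
\]
Consequently the Hessian quadratic form of $s$ is $\frac12|u+v|^2$ for general tangent vectors $(u,v)$, where $T_xS^2=T_yS^2=x^\perp$. Its kernel $\{v=-u\}$ is exactly $T\mathcal A$, since $d\mathfrak a=-\mathrm{id}$ on $x^\perp$. The orthogonal complement is $\{(u,u)\}$, and on it the form equals $2|u|^2=|(u,u)|^2$. So $\operatorname{Hess}s$ has eigenvalue $1$ with multiplicity two on the normal bundle, and the normal Hessian eigenvalues of $B_\beta$ are $\Theta_{2,\beta}\,a^2=\frac{\beta^2}{4}\Theta_{2,\beta}$. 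This gives \eqref{eq:sphere-two-point-normal-hessian}, and since these eigenvalues are positive, $\mathcal A$ is a nondegenerate (Morse--Bott) critical manifold.

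As a consistency check, moving only $q$ by angle $\varepsilon$ is the displacement $(0,v)$ with $|v|=\varepsilon$. Its normal component is $(v/2,v/2)$, of squared length $\varepsilon^2/2$, so the quadratic form gives $\frac12\cdot\frac{\beta^2}{4}\Theta\cdot\frac{\varepsilon^2}{2}=\frac{\beta^2}{16}\Theta\,\varepsilon^2$, in agreement with \eqref{eq:sphere-two-point-local-expansion}. The step most likely to go wrong is the normalization: the factor relating the Fubini--Study chordal distance to angles on the unit-round sphere, and the mixed term in the geodesic expansion. I would confirm both with this one-point displacement cross-check.
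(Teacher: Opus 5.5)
Your proposal is correct and follows essentially the same route as the paper: $s=\sin^2(\varepsilon/2)$ together with the linear hypergeometric coefficient $a^2=\beta^2/4$ gives the expansion, and the normal eigenvalue comes from the quadratic growth of $s$ in the normal directions $(u,u)$. The paper reads this growth off from the angular defect $\sqrt2\,t$ along the unit normal $(v,v)/\sqrt2$ and substitutes into the one-point expansion, whereas you package the same computation as $\operatorname{Hess}s=\tfrac12|u+v|^2$ combined with the chain rule and $ds=0$ on $\mathcal A$, which has the small advantage of showing explicitly that the kernel of the Hessian is $T\mathcal A$.
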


\begin{proof}
Here $s=\sin^2(\varepsilon/2)=\varepsilon^2/4+O(\varepsilon^4)$, and the
linear coefficient of the hypergeometric factor is $(\beta/2)^2$.  This
proves \eqref{eq:sphere-two-point-local-expansion}.  A product-unit normal
vector to the antipodal manifold has the form $(v,v)/\sqrt2$ and produces
antipodal angular defect $\sqrt2\,t+O(t^3)$ at time $t$; substituting into
\eqref{eq:sphere-two-point-local-expansion} gives
\eqref{eq:sphere-two-point-normal-hessian}.
\end{proof}

\begin{corollary}[The two endpoint problems]
\label{cor:sphere-two-point-endpoints}
For arbitrary $p,q$ and $s$ as above,
\begin{align}
 \left\|e^{U_{\rm FS}(\cdot,p)+U_{\rm FS}(\cdot,q)}\right\|_\infty
 &=\frac e2(1+\sqrt s),
 \label{eq:sphere-two-point-infinity-pair}\\
 \int_{\bP^1}
 \bigl(U_{\rm FS}(z,p)+U_{\rm FS}(z,q)\bigr)^2d\mu_{\rm FS}(z)
 &=1-\frac{\pi^2}{12}+\frac12\operatorname{Li}_2(s).
 \label{eq:sphere-two-point-quadratic-pair}
\end{align}
Consequently, both endpoint minimizers are exactly the antipodal pairs, and
\begin{equation}\label{eq:sphere-two-point-endpoint-values}
 \Theta_{2,\infty}(\bP^1)=\frac e2,
 \qquad
 \min_{p,q}\int_{\bP^1}
 (U_{\rm FS}(z,p)+U_{\rm FS}(z,q))^2d\mu_{\rm FS}(z)
 =1-\frac{\pi^2}{12}.
\end{equation}
Moreover,
\begin{equation}\label{eq:sphere-two-point-large-beta}
 \Theta_{2,\beta}(\bP^1)
 =\left(\frac e2\right)^\beta
  \sqrt{\frac{\pi}{2\beta}}
  \left(1+O(\beta^{-1})\right)
 \qquad(\beta\to\infty).
\end{equation}
\end{corollary}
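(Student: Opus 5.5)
The plan is to read everything off the two facts already in hand: the explicit Green kernel \eqref{eq:sphere-two-point-green} and the exact formula \eqref{eq:sphere-two-point-exact} of \Cref{thm:sphere-two-point-exact}. No new integration should be needed.

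\emph{The pointwise norm.} By \eqref{eq:sphere-two-point-green},
\[
 e^{U_{\rm FS}(z,p)+U_{\rm FS}(z,q)}=e\,d_{\rm ch}(z,p)\,d_{\rm ch}(z,q)=e\,|\ell_p(\xi)|\,|\ell_q(\xi)|,
\]
where $\xi$ is a unit lift of $z$ and $\ell_p,\ell_q$ are the unit linear forms from the proof of \Cref{thm:sphere-two-point-exact}, with $|\langle\ell_p,\ell_q\rangle|^2=s$. So \eqref{eq:sphere-two-point-infinity-pair} reduces to a finite-dimensional fact: for unit vectors $a,b\in\C^2$,
\[
 \max_{|\xi|=1}|\langle\xi,a\rangle\langle\xi,b\rangle|=\tfrac12\bigl(1+|\langle a,b\rangle|\bigr).
\]
I would prove this in an adapted frame. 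Rotate so that $a=(\cos\theta,\sin\theta)$ and $b=(\cos\theta,-\sin\theta)$, adjusting phases so that $\langle a,b\rangle=\cos2\theta\ge0$. Then maximize over $\xi$ by writing the product as a quadratic form in $\xi$: the bound follows from the AM--GM inequality combined with the parallelogram identity, and equality holds along the bisecting direction. This elementary extremal step is the only point where care is needed, mainly to get the phases right. Since the resulting value is strictly increasing in $s$, the minimum is attained exactly at $s=0$, which gives $\Theta_{2,\infty}=e/2$ and identifies the antipodal pairs as the unique minimizers.

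\emph{The quadratic energy.} Rather than integrating directly, I would use the uniform small-$\beta$ Taylor expansion \eqref{eq:uniform-small-beta-Taylor} from the proof of \Cref{prop:theta-minimizer-endpoint-selection}. Its error bound is uniform over configurations, hence uniform in $s$. It says that $\mathcal E_2(p,q)$ equals twice the $\beta^2$ coefficient of $B_\beta((p,q);\bP^1)$, and I would compute that coefficient from \eqref{eq:sphere-two-point-exact} with $a=\beta/2$.
\begin{itemize}
\item \emph{Prefactor.} Using $\log\Gamma(1+x)=-\gamma x+\zeta(2)x^2/2+O(x^3)$ and $\log\Gamma(2+x)=(1-\gamma)x+(\zeta(2)-1)x^2/2+O(x^3)$, one gets
\[
 \log\Bigl(e^\beta\,\Gamma(1+a)^2/\Gamma(2+2a)\Bigr)=a^2\bigl(2-\zeta(2)\bigr)+O(a^3),
\]
so the prefactor is $1+\tfrac{\beta^2}{2}\bigl(1-\tfrac{\pi^2}{12}\bigr)+O(\beta^3)$.
\item \emph{Hypergeometric factor.} Since $(-a)_k=-a\,(k-1)!\,\bigl(1+O(a)\bigr)$ for each $k\ge1$, we have $(-a)_k^2/(k!)^2=a^2/k^2+O(a^3/k^2)$. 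Summing over $k$ uniformly in $s\in[0,1]$ gives ${}_2F_1(-a,-a;1;s)=1+\tfrac{\beta^2}{4}\operatorname{Li}_2(s)+O(\beta^3)$.
\end{itemize}
Multiplying the two expansions and comparing with $1+\tfrac{\beta^2}{2}\mathcal E_2$ yields \eqref{eq:sphere-two-point-quadratic-pair}. Since $\operatorname{Li}_2$ is strictly increasing on $[0,1]$ with $\operatorname{Li}_2(0)=0$, the minimum is attained exactly at the antipodal pairs, with value $1-\pi^2/12$. This gives \eqref{eq:sphere-two-point-endpoint-values}.

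\emph{Large $\beta$.} Starting from \eqref{eq:sphere-two-point-theta}, write
\[
 \frac{\Gamma(1+x)^2}{\Gamma(2+2x)}=\frac{1}{1+2x}\cdot\frac{\Gamma(1+x)^2}{\Gamma(1+2x)}
\]
with $x=\beta/2$. By Stirling's formula, $\Gamma(1+x)^2/\Gamma(1+2x)=\sqrt{\pi x}\,4^{-x}\bigl(1+O(1/x)\bigr)$. Substituting $x=\beta/2$ gives
\[
 \Theta_{2,\beta}=e^\beta\,2^{-\beta}\,\frac{\sqrt{\pi\beta/2}}{1+\beta}\,\bigl(1+O(\beta^{-1})\bigr)=\Bigl(\frac e2\Bigr)^\beta\sqrt{\frac{\pi}{2\beta}}\,\bigl(1+O(\beta^{-1})\bigr),
\]
which is \eqref{eq:sphere-two-point-large-beta}.
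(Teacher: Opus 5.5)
Your proposal is correct and follows the paper's proof essentially step for step. The ingredients are the same: AM--GM together with the spectral bound $\max_{|\xi|=1}\bigl(|\ell_p(\xi)|^2+|\ell_q(\xi)|^2\bigr)=1+\sqrt s$ (your adapted-frame/parallelogram argument is the paper's top-eigenvector argument in coordinates), comparison of the $\beta^2$ coefficient of the uniform mean-zero Taylor expansion with the exact formula \eqref{eq:sphere-two-point-exact}, and Stirling's formula for the large-$\beta$ asymptotics.

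One small correction in the hypergeometric step. Since $(-a)_k^2/(k!)^2=a^2k^{-2}\prod_{j=1}^{k-1}(1-a/j)^2$, the relative error is $O\bigl(a(1+\log k)\bigr)$, not $O(a)$ uniformly in $k$. Because $\sum_k k^{-2}\log k<\infty$, the uniform $O(\beta^3)$ remainder for ${}_2F_1(-a,-a;1;s)$ on $[0,1]$ still follows.
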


\begin{proof}
For \eqref{eq:sphere-two-point-infinity-pair}, choose phases so that
$|\langle\ell_p,\ell_q\rangle|=\sqrt s$.  For every unit $\zeta$,
\[
 |\ell_p(\zeta)\ell_q(\zeta)|
 \le\frac12\bigl(|\ell_p(\zeta)|^2+|\ell_q(\zeta)|^2\bigr)
 \le\frac12(1+\sqrt s),
\]
and equality holds at a top eigenvector of
$\ell_p^*\ell_p+\ell_q^*\ell_q$.

For the quadratic identity, compare the coefficient of $\beta^2$ in the
mean-zero expansion
\[
 B_\beta(W)=1+\frac{\beta^2}{2}\int V_W^2d\mu+O(\beta^3)
\]
with the exact formula.  One uses
\[
 \log\!\left(
 e^\beta\frac{\Gamma(1+\beta/2)^2}{\Gamma(2+\beta)}
 \right)
 =\left(\frac12-\frac{\pi^2}{24}\right)\beta^2+O(\beta^3)
\]
and
\[
 {}_2F_1(-\beta/2,-\beta/2;1;s)
 =1+\frac{\beta^2}{4}\operatorname{Li}_2(s)+O(\beta^3),
\]
uniformly for $s\in[0,1]$.  Finally,
\eqref{eq:sphere-two-point-large-beta} follows from the gamma duplication
formula \cite[Eq.~5.5.5]{DLMF} and Stirling's expansion
\cite[Eq.~5.11.1]{DLMF} applied to
\eqref{eq:sphere-two-point-theta}.
\end{proof}

\begin{remark}[Scope of the exact model]
\Cref{thm:sphere-two-point-exact} completely solves the $N=2$ round-sphere
case of \Cref{prob:minimizer-characterization} and the corresponding
fixed-$N$ branch of \Cref{prob:optimal-constants}.  It does not classify
minimizers for $N\ge3$ or identify the large-$N$ microscopic lattice.  It
does provide a strict normalization test for any future spherical or planar
cell problem.
\end{remark}

\subsection{The three-point sphere at the exponent two}
\label{subsec:sphere-three-two}

\Cref{prob:sphere-three-point} asks for $\Theta_{3,\beta}(\bP^1)$ and its
minimizers.  At the single exponent $\beta=2$ the problem can be solved
completely, and the solution identifies it with a classical extremal problem for
point configurations.  The mechanism is that at an even integer exponent the
integrand becomes the squared modulus of a holomorphic form, so the integral is a
Hermitian norm and the whole configuration dependence collapses onto the pairwise
overlaps.

We first record the orthogonality relation that makes the computation possible.

\begin{lemma}[Monomial orthogonality on $\bP^1$]\label{lem:sphere-basis-B}
With the normalization of \Cref{subsec:sphere-two-point-model}, for
$0\leq k,l\leq N$,
\begin{equation}\label{eq:sphere-monomial-orth}
 \int_{\bP^1}\zeta_0^{N-k}\zeta_1^k
 \overline{\zeta_0^{N-l}\zeta_1^l}\,d\mu_{\rm FS}
 =\delta_{kl}\,\frac1{(N+1)\binom Nk},
\end{equation}
where $\zeta$ denotes a unit representative and the integrand is evaluated on the
Hopf fibre, so that the expression descends to $\bP^1$.
\end{lemma}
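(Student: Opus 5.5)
The plan is to lift the integral to the unit sphere $S^3\subset\C^2$ and compute it with Gaussian moments, using the device already employed in the proof of \Cref{thm:sphere-two-point-exact}. The integrand $\zeta_0^{N-k}\zeta_1^k\overline{\zeta_0^{N-l}\zeta_1^l}$ is invariant under $\zeta\mapsto e^{i\theta}\zeta$, because both factors are homogeneous of the same bidegree $(N,N)$. It therefore descends to $\bP^1$. Since $\mu_{\rm FS}$ is the pushforward of normalized surface measure $\sigma$ on $S^3$ under the Hopf map, the left side equals $\int_{S^3}\zeta_0^{N-k}\zeta_1^k\overline{\zeta_0^{N-l}\zeta_1^l}\,d\sigma$.

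For orthogonality, apply the diagonal torus action $(\zeta_0,\zeta_1)\mapsto(\zeta_0,e^{i\theta}\zeta_1)$. It preserves $\sigma$ and multiplies the integrand by $e^{i(k-l)\theta}$. Averaging over $\theta$ shows the integral vanishes unless $k=l$.

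For the diagonal value, write a standard complex Gaussian vector $G\in\C^2$ as $G=R\Xi$, exactly as in the proof of \Cref{thm:sphere-two-point-exact}. Here $R$ is independent of $\Xi\sim\sigma$, and $R^2$ has the gamma distribution of shape two. Then
\[
 \mathbb E\bigl(|G_0|^{2(N-k)}|G_1|^{2k}\bigr)
 =\mathbb E(R^{2N})\int_{S^3}|\zeta_0|^{2(N-k)}|\zeta_1|^{2k}\,d\sigma .
\]
The coordinates $G_0,G_1$ are independent, and $|G_i|^2$ is exponential with mean one, so the left side is $(N-k)!\,k!$. Also $\mathbb E(R^{2N})=\Gamma(N+2)=(N+1)!$. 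Dividing gives
\[
 \frac{(N-k)!\,k!}{(N+1)!}=\frac{1}{(N+1)\binom Nk},
\]
which is \eqref{eq:sphere-monomial-orth}.

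The only point requiring care is the normalization: $\mu_{\rm FS}$ must be the Hopf pushforward of the normalized $\sigma$. This is consistent with the fact used at \eqref{eq:sphere-two-point-green} that $|\zeta_1|^2$ is uniform on $[0,1]$. As a check, the $N=1$, $k=1$ case gives $1/2$, which agrees with that fact. The rest is routine.
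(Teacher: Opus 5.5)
Your proof is correct, but the diagonal computation takes a different route from the paper's. The paper works in the affine chart, with $\zeta=(1,z)/\sqrt{1+|z|^2}$ and the explicit density $d\mu_{\rm FS}=dA(z)/(\pi(1+|z|^2)^2)$. There the integrand becomes $z^k\bar z^{\,l}(1+|z|^2)^{-N}$, the angular integral in polar coordinates produces $\delta_{kl}$, and $t=|z|^2$ turns the diagonal term into the Euler beta integral $\int_0^\infty t^k(1+t)^{-N-2}\,dt=k!\,(N-k)!/(N+1)!$.

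Your torus averaging on $S^3$ is the same orthogonality mechanism, since $\zeta_1\mapsto e^{i\theta}\zeta_1$ is exactly the rotation $z\mapsto e^{i\theta}z$ in the chart. For the diagonal value you replace the beta integral by the Gaussian radial/angular factorization. That factorization is the probabilistic proof of $\Gamma(a)\Gamma(b)=\Gamma(a+b)B(a,b)$: $|\Xi_0|^2$ is uniform and independent of $R^2$, and $u=1/(1+t)$ carries $\int_0^1u^{N-k}(1-u)^k\,du$ to the paper's integral. So both computations evaluate the same identity.

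Your route is coordinate-free and needs no chart density. It reuses the device already in the proof of \Cref{thm:sphere-two-point-exact}, and it extends verbatim to monomials on $\bP^n$. Its cost is the identification of $\mu_{\rm FS}$ with the Hopf pushforward of normalized $\sigma$. Your $N=1$ check is only a consistency test, not a proof. The identification does follow at once from uniqueness of the $\mathrm U(2)$-invariant probability measure on the homogeneous space $\bP^1$, which is how the paper defines $\mu_{\rm FS}$; the paper also uses $[\Xi]\sim\mu_{\rm FS}$ itself. The paper's route avoids this step by taking the explicit density formula as given, and is then fully elementary.
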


\begin{proof}
The angular integration kills the off-diagonal terms, and the radial one is a beta
integral.

Work in the affine chart, where
\[
 \zeta=\frac{(1,z)}{\sqrt{1+|z|^2}},
 \qquad
 d\mu_{\rm FS}=\frac{dA(z)}{\pi(1+|z|^2)^2},
\]
so that the integrand becomes $z^k\bar z^{\,l}(1+|z|^2)^{-N}$.  Passing to polar coordinates, the angular
integral vanishes unless $k=l$, which gives the Kronecker delta.  For $k=l$ the
substitution $t=|z|^2$ turns the remaining integral into the Euler beta integral
\[
 \int_0^\infty\frac{t^k}{(1+t)^{N+2}}\,dt
 =\frac{k!\,(N-k)!}{(N+1)!}
 =\frac1{(N+1)\binom Nk},
\]
which is \eqref{eq:sphere-monomial-orth}.
\end{proof}

With the orthogonality in hand, the three-point energy can be read off.  We
express it in the natural invariant of a pair of points on $\bP^1$.

For unit representatives $\zeta,\omega\in\C^2$ of points of $\bP^1$ write
\begin{equation}\label{eq:overlap-def}
 s(\zeta,\omega):=|\langle\zeta,\omega\rangle|^2
 =1-d_{\rm ch}([\zeta],[\omega])^2 ,
\end{equation}
so that $s=0$ for antipodal points and $s=1$ for coincident ones.

\begin{theorem}[Exact three-point energy at $\beta=2$]
\label{thm:sphere-three-point-beta2}
With the conventions of \Cref{subsec:sphere-two-point-model}, for every
$W=(w_1,w_2,w_3)\in(\bP^1)^3$ with unit representatives $\zeta_1,\zeta_2,\zeta_3$,
\begin{equation}\label{eq:sphere-three-point-beta2}
 B_2(W;\bP^1)=\frac{e^3}{12}\sum_{1\leq i<j\leq3}s(\zeta_i,\zeta_j).
\end{equation}
Consequently
\begin{equation}\label{eq:theta-three-two}
 \Theta_{3,2}(\bP^1)=\frac{e^3}{16},
\end{equation}
and the minimizers are exactly the equilateral triangles inscribed in a great
circle, unique modulo $\mathrm{PSU}(2)$ and permutation.
\end{theorem}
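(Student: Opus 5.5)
The plan is to turn $e^{2V_W}$ into the squared modulus of a cubic binary form, evaluate its Fubini--Study integral as a Gaussian moment, and read off a permanent. By \eqref{eq:sphere-two-point-green},
\[
 2V_W=3+\sum_{j=1}^3\log d_{\rm ch}(\cdot,w_j)^2 .
\]
As in the proof of \Cref{thm:sphere-two-point-exact}, choose unit linear forms $\ell_j$ on $\C^2$ vanishing at $w_j$, so that $d_{\rm ch}([\xi],w_j)=|\ell_j(\xi)|$ for unit $\xi$. Then
\[
 B_2(W;\bP^1)=e^3\int_{\bP^1}|\ell_1\ell_2\ell_3(\xi)|^2\,d\mu_{\rm FS}.
\]
I evaluate this integral with the Gaussian device of that same proof. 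Write $G=R\Xi$ with $R$ independent of $[\Xi]\sim\mu_{\rm FS}$ and $\mathbb E R^{6}=\Gamma(5)=24$. Since $\ell_1\ell_2\ell_3$ is homogeneous of degree three,
\[
 \int_{\bP^1}|\ell_1\ell_2\ell_3|^2\,d\mu_{\rm FS}
 =\frac1{24}\,\mathbb E\bigl|\ell_1(G)\ell_2(G)\ell_3(G)\bigr|^2 .
\]
The variables $\ell_j(G)$ are jointly centered complex Gaussians with covariances $\mathbb E\,\ell_i(G)\overline{\ell_j(G)}=c_{ij}$. Here $c_{ii}=1$, and $|c_{ij}|^2=s(\zeta_i,\zeta_j)$ because the unit forms $\ell_j$ correspond to the orthogonal complements of $\zeta_j$. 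The complex Wick (Isserlis) formula then gives
\[
 \mathbb E\prod_j\ell_j(G)\overline{\prod_k\ell_k(G)}=\operatorname{per}(c_{ij}).
\]
Hence $B_2=e^3\operatorname{per}(c)/24$, which is the $N=3$ case of the general permanent formula announced in the abstract. \Cref{lem:sphere-basis-B} could serve as an alternative route, by expanding $\ell_1\ell_2\ell_3$ in monomials, but the Wick route is cleaner.

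Next I simplify the permanent. Expanding directly,
\[
 \operatorname{per}(c)=1+\sum_{i<j}s_{ij}+2\operatorname{Re}(c_{12}c_{23}c_{31}).
\]
The key observation is that three vectors in $\C^2$ have a singular Gram matrix. Therefore
\[
 0=\det(c)=1-\sum_{i<j}s_{ij}+2\operatorname{Re}(c_{12}c_{23}c_{31}).
\]
Subtracting this from the permanent expansion gives $\operatorname{per}(c)=2\sum_{i<j}s_{ij}$, and hence \eqref{eq:sphere-three-point-beta2}. As sanity checks, a triple collision gives $e^3/4$, and in the two-point analogue the corresponding identity reproduces the hypergeometric value of \Cref{thm:sphere-two-point-exact} at $\beta=2$.

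For the minimization I pass to the Riemann sphere model $S^2\subset\R^3$ with unit vectors $x_i$. There $s_{ij}=(1+x_i\cdot x_j)/2$, since $1-s=d_{\rm ch}^2=\sin^2(\theta/2)$. Therefore
\[
 \sum_{i<j}s_{ij}=\frac32+\frac14\Bigl(\bigl|\textstyle\sum_i x_i\bigr|^2-3\Bigr)\ge\frac34,
\]
with equality if and only if $x_1+x_2+x_3=0$. Three unit vectors summing to zero are coplanar through the origin with pairwise angles $2\pi/3$, that is, an equilateral triangle on a great circle. Such triangles exist, so the bound is attained and gives $\Theta_{3,2}=e^3/16$. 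They form a single $\mathrm{SO}(3)=\mathrm{PSU}(2)$ orbit up to permutation.

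The main obstacle is the permanent evaluation: fixing the conjugation conventions so that the Wick sum yields $\operatorname{per}(c_{ij})$ with the right moduli $|c_{ij}|^2=s_{ij}$, and confirming that the normalization $\mathbb E R^6=24$ matches $\mu_{\rm FS}$. I would check the normalization first against \Cref{lem:sphere-basis-B} with $\ell_1=\ell_2=\ell_3=\zeta_1$, which must give $1/4$. The remaining steps, the Gram-determinant identity and the vector-sum minimization, are elementary.
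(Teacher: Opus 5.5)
Your proof is correct. It departs from the paper's at the one substantive step, the evaluation of $\int_{\bP^1}|\ell_1\ell_2\ell_3|^2\,d\mu_{\rm FS}$.

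The paper expands the cubic in monomials and uses the orthogonality of \Cref{lem:sphere-basis-B} to identify the integral with $\tfrac14\|P\|_{\mathrm{BW}}^2$. It then gets the permanent by polarizing the Fischer reproducing identity $\langle\ell_u^N,\ell_v^N\rangle_{\mathrm{BW}}=\langle v,u\rangle^N$ (\Cref{lem:bw-permanent}). You instead reuse the splitting $G=R\Xi$ from \Cref{thm:sphere-two-point-exact} and apply the complex Wick formula, which yields $\operatorname{per}(c_{ij})$ in one step. The normalizations agree: $\mathbb E R^{2N}=(N+1)!=(N+1)N!$ reproduces the factor in \eqref{eq:even-exponent-permanent}.

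The two routes are really the same fact seen twice. The Fischer product is $(N!)^{-1}$ times the Gaussian $L^2$ product, so Wick's formula and the polarized reproducing identity coincide. Your version is more self-contained: it needs neither \Cref{lem:sphere-basis-B} nor polarization, and it puts the $\beta=2$ computation for every $N$ on the same footing as the $N=2$ hypergeometric one. The paper's version buys the explicit Bombieri--Weyl interpretation. The paper needs that afterwards for \Cref{thm:even-exponent-bombieri} and for the floor \eqref{eq:theta-N-two-floor} from Bombieri's inequality.

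Your Gram-determinant simplification and your minimization through $|x_1+x_2+x_3|^2$ are the same as the paper's.

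The only place you are thinner is uniqueness. You assert that the equilateral great-circle triangles form a single $\mathrm{SO}(3)$-orbit up to permutation. The paper proves this with an oriented-frame argument, and in fact gets transitivity on ordered triples. One line closes it on your side: rotate one plane onto the other with $x_1\mapsto x_1'$, then if needed rotate by $\pi$ about $x_1$, which swaps $x_2$ and $x_3$.
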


\begin{proof}
There are three steps.  At the exponent $2$ the integrand is the squared modulus of
a holomorphic cubic, so $B_2$ becomes a Bombieri--Weyl norm.  That norm equals a
permanent of the Gram matrix of overlaps, which simplifies for three vectors in
$\C^2$ because the Gram matrix is then singular.  The resulting expression is
minimized by a classical distance-maximization on the sphere.

Let $\ell_j(\zeta):=\det(\zeta,\zeta_j)$, a linear form vanishing precisely at
$w_j$, and put $P:=\ell_1\ell_2\ell_3$.  For a unit representative $\zeta$ of $z$
one has $d_{\rm ch}(z,w_j)=|\ell_j(\zeta)|$, so
$U_{\rm FS}(z,w_j)=\tfrac12+\log|\ell_j(\zeta)|$ and therefore
\[
 \exp\Bigl(2\sum_{j=1}^3U_{\rm FS}(z,w_j)\Bigr)=e^3|P(\zeta)|^2 .
\]
Hence $B_2(W;\bP^1)=e^3\int_{\bP^1}|P|^2\,d\mu_{\rm FS}$, and
\Cref{lem:sphere-basis-B} with $N=3$ evaluates that integral coefficientwise:
writing $P=\sum_{k=0}^3a_k\zeta_0^{3-k}\zeta_1^k$,
\begin{equation}\label{eq:three-point-norm-sum}
 B_2(W;\bP^1)=\frac{e^3}{4}\sum_{k=0}^3\frac{|a_k|^2}{\binom3k}
 =\frac{e^3}{4}\,\|P\|_{\mathrm{BW}}^2 .
\end{equation}

We evaluate $\|P\|_{\mathrm{BW}}^2$ exactly, by
\Cref{lem:bw-permanent} below: for any $N$ and any unit vectors
$\zeta_1,\ldots,\zeta_N$,
\begin{equation}\label{eq:bw-perm-three}
 \|\ell_1\cdots\ell_N\|_{\mathrm{BW}}^2=\frac1{N!}\operatorname{per}G,
 \qquad G_{ij}:=\langle\zeta_i,\zeta_j\rangle .
\end{equation}
For $N=3$ the permanent expands over the identity, the three transpositions and
the two $3$-cycles, and $G_{jj}=1$, so
\begin{equation}\label{eq:perm-three-expand}
 \operatorname{per}G
 =1+\sum_{i<j}|G_{ij}|^2+2\operatorname{Re}(G_{12}G_{23}G_{31}).
\end{equation}
The triple term is not independent.  The $\zeta_j$ lie in $\C^2$, so $G$ has rank
at most $2$ and $\det G=0$; expanding that determinant in the same way gives
\begin{equation}\label{eq:det-three-expand}
 0=\det G=1-\sum_{i<j}|G_{ij}|^2+2\operatorname{Re}(G_{12}G_{23}G_{31}),
\end{equation}
whence $2\operatorname{Re}(G_{12}G_{23}G_{31})=\sum_{i<j}|G_{ij}|^2-1$.
Substituting into \eqref{eq:perm-three-expand} and recalling
$s(\zeta_i,\zeta_j)=|G_{ij}|^2$,
\[
 \operatorname{per}G=2\sum_{i<j}s(\zeta_i,\zeta_j),
 \qquad\text{so}\qquad
 \|P\|_{\mathrm{BW}}^2=\frac13\sum_{i<j}s(\zeta_i,\zeta_j),
\]
and \eqref{eq:three-point-norm-sum} gives
\eqref{eq:sphere-three-point-beta2}.

It remains to minimize.  By \eqref{eq:overlap-def},
\[
 \text{minimizing }\ \sum_{i<j}s(\zeta_i,\zeta_j)
 \quad\Longleftrightarrow\quad
 \text{maximizing }\ \sum_{i<j}d_{\rm ch}(w_i,w_j)^2 .
\]
Let $x_1,x_2,x_3\in S^2$ be the unit vectors
corresponding to $w_1,w_2,w_3$ under the Hopf identification, so that
\begin{equation}\label{eq:chordal-euclid}
 d_{\rm ch}(w_i,w_j)^2=\frac{1-x_i\cdot x_j}{2}=\frac{|x_i-x_j|^2}{4}.
\end{equation}
Then
\[
 \sum_{i<j}|x_i-x_j|^2
 =3\sum_i|x_i|^2-\Bigl|\sum_ix_i\Bigr|^2
 =9-\Bigl|\sum_ix_i\Bigr|^2\leq9,
\]
with equality exactly when $x_1+x_2+x_3=0$, that is, when the three unit vectors
form an equilateral triangle in a plane through the origin.  By
\eqref{eq:chordal-euclid} the maximum of $\sum_{i<j}d_{\rm ch}^2$ is therefore
$9/4$, the minimum of $\sum_{i<j}s$ is $3-9/4=3/4$, and
\eqref{eq:sphere-three-point-beta2} gives
$\Theta_{3,2}=\tfrac{e^3}{12}\cdot\tfrac34=e^3/16$.

Finally we check the uniqueness clause.  The minimizers are exactly the ordered
triples $(x_1,x_2,x_3)$ of unit vectors with $x_1+x_2+x_3=0$.  Pairing that
relation with $x_i$ gives $x_i\cdot x_j=-\tfrac12$ for all $i\neq j$.  Recall that
$\mathrm{PSU}(2)$ acts on $S^2$ as the full rotation group $\mathrm{SO}(3)$; we
claim it is transitive on such ordered triples, which are in general spanned by
different planes.

Attach to a minimizing triple the orthonormal frame
\[
 f_1:=x_1,
 \qquad
 f_2:=\frac{x_2-(x_2\cdot x_1)x_1}{|x_2-(x_2\cdot x_1)x_1|},
 \qquad
 f_3:=f_1\times f_2 .
\]
This is well defined because $x_2\cdot x_1=-\tfrac12$, so $x_2$ is not parallel to
$x_1$, and $(f_1,f_2,f_3)$ is positively oriented by construction.  Moreover the
triple is recovered from its frame by universal formulas: $x_1=f_1$, and
\[
 x_2=-\tfrac12f_1+\tfrac{\sqrt3}2f_2,
 \qquad
 x_3=-x_1-x_2=-\tfrac12f_1-\tfrac{\sqrt3}2f_2,
\]
since $x_2$ has component $-\tfrac12$ along $f_1$, unit length, and nonnegative
$f_2$-component by the choice of $f_2$.  The coefficients involve no data beyond
the Gram matrix, which is the same for all minimizing triples.

Now let $(x_j)$ and $(x_j')$ be two minimizing triples with frames $(f_i)$ and
$(f_i')$.  Both frames are positively oriented orthonormal bases of $\R^3$, so
there is a unique $R\in\mathrm{SO}(3)$ with $Rf_i=f_i'$ for $i=1,2,3$.  Applying
$R$ to the displayed formulas gives $Rx_j=x_j'$ for $j=1,2,3$.  Hence
$\mathrm{PSU}(2)$ is transitive on ordered minimizing triples and the minimizing
set is a single orbit; the permutation freedom in the statement reflects only the
labelling of $W$ as an ordered configuration.
\end{proof}

\begin{lemma}[Bombieri--Weyl norm of a product of linear forms]
\label{lem:bw-permanent}
Let $\zeta_1,\ldots,\zeta_N\in\C^2$ and $\ell_j(\zeta)=\det(\zeta,\zeta_j)$.
Then \eqref{eq:bw-perm-three} holds, where $\operatorname{per}$ denotes the
permanent and $G_{ij}=\langle\zeta_i,\zeta_j\rangle$.
\end{lemma}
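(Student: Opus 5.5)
The plan is to realize the Bombieri--Weyl inner product as a Gaussian (Fock) inner product and then expand the product of linear forms by Wick's theorem.

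\emph{Step 1: the Bombieri--Weyl norm as a Gaussian moment.} From \eqref{eq:three-point-norm-sum} and \Cref{lem:sphere-basis-B}, the norm is $\|P\|_{\mathrm{BW}}^2=\sum_k|a_k|^2/\binom Nk$ for $P=\sum_k a_k\zeta_0^{N-k}\zeta_1^k$. We compare this with $\mathbb E|P(G)|^2$ for a standard complex Gaussian vector $G\in\C^2$. The monomials are orthogonal in this Gaussian space, and
\[
\mathbb E|G_0|^{2(N-k)}|G_1|^{2k}=(N-k)!\,k!=N!/\binom Nk.
\]
This gives the identity
\[
\|P\|_{\mathrm{BW}}^2=\frac1{N!}\,\mathbb E|P(G)|^2 .
\]
Equivalently, one can use the radial decomposition $G=R\Xi$ from the proof of \Cref{thm:sphere-two-point-exact} together with $\mathbb E R^{2N}=(N+1)!$ and the factor $(N+1)$ in \Cref{lem:sphere-basis-B}. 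Both routes give the same identity, and I would use whichever is cleaner.

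\emph{Step 2: the variables $\ell_j(G)$.} Each $\ell_j(G)=\det(G,\zeta_j)$ is a linear form. It can be written as $\langle G,\eta_j\rangle$ (up to conjugation conventions), where $\eta_j$ is the image of $\zeta_j$ under the fixed unitary rotation $J\colon(a,b)\mapsto(-\bar b,\bar a)$. So the $X_j:=\ell_j(G)$ are jointly centered complex Gaussian variables. Their covariances are
\[
\mathbb E X_i\overline{X_j}=\langle\eta_j,\eta_i\rangle .
\]
Because $J$ is antiunitary, $\langle\eta_j,\eta_i\rangle=\langle\zeta_i,\zeta_j\rangle$ or its conjugate.

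\emph{Step 3: Wick expansion.} The complex Wick (Isserlis) formula for centered circular complex Gaussians gives
\[
\mathbb E\prod_jX_j\overline{\prod_jX_j}=\sum_{\sigma\in S_N}\prod_i\mathbb E X_i\overline{X_{\sigma(i)}}=\operatorname{per}\bigl(\mathbb E X_i\overline{X_j}\bigr).
\]
The resulting matrix is either $G$ or $\overline G=G^{T}$. Since the permanent is invariant under transposition, the value is $\operatorname{per}G$ in either case. Combined with Step 1, this yields \eqref{eq:bw-perm-three}.

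\emph{Main obstacle.} The main risk is the bookkeeping of conventions. One must make sure that the normalization $\|\cdot\|_{\mathrm{BW}}$ (with weights $1/\binom Nk$) matches $\mathbb E|P(G)|^2/N!$ exactly, with no stray factor of $N+1$. One must also fix the conjugation in $\det(\cdot,\zeta_j)$ consistently. The transposition invariance of the permanent absorbs any remaining ambiguity between $G$ and $\overline G$. As a sanity check, $N=1$ gives $\|\ell_1\|^2=|\zeta_1|^2=G_{11}$. For $N=2$ with $\zeta_1=\zeta_2=e_1$, we get $\ell=-\zeta_1$ (up to sign), so $P=\zeta_1^2$ has norm $1$, while $\operatorname{per}G/2!=2/2=1$, consistent.
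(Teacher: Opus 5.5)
Your proof is correct, but it reaches the permanent by a different mechanism from the paper's. The paper stays in the algebra of binary forms. It identifies the Bombieri--Weyl product with the Fischer product and quotes the reproducing identity $\langle\ell_u^N,\ell_v^N\rangle_{\mathrm{BW}}=\langle v,u\rangle^N$. It then polarizes in each of the $N$ slots to get $\langle\ell_{u_1}\cdots\ell_{u_N},\ell_{v_1}\cdots\ell_{v_N}\rangle_{\mathrm{BW}}=\frac1{N!}\sum_{\pi}\prod_j\langle v_{\pi(j)},u_j\rangle$, and sets $u_j=v_j=\zeta_j$. You instead write $\|P\|_{\mathrm{BW}}^2=\frac1{N!}\mathbb E|P(G)|^2$, checking the normalization directly from the moments $(N-k)!\,k!$, and let the Wick (permanent) formula for the jointly circular Gaussians $X_j=\ell_j(G)$ do the combinatorics.

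These are two faces of one fact: Gaussian $L^2$ is the Bargmann--Fock model of the Fischer product, and the Wick expansion is the polarized form of the reproducing identity. They buy different things. The paper's argument is shorter and purely algebraic. Its cost is that it quotes the Fischer identification and leaves the conjugation conventions for $\ell_u=\det(\,\cdot\,,u)$ implicit. Any conjugation of the Gram matrix this causes is harmless precisely because of the transposition invariance of the permanent that you invoke.

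Your argument derives the normalization from the definition. It also makes $\det(\,\cdot\,,\zeta_j)=\pm\langle\,\cdot\,,J\zeta_j\rangle$ and $\langle J\zeta_j,J\zeta_i\rangle=\langle\zeta_i,\zeta_j\rangle$ explicit. And it meshes with the radial decomposition $G=R\Xi$ used in \Cref{thm:sphere-two-point-exact}. With $\mathbb E R^{2n}=(n+1)!$ it gives at once $\int_{\bP^1}|P_W|^{2m}\,d\mu_{\rm FS}=\operatorname{per}\widetilde G/(mN+1)!$. Here $\widetilde G$ is the Gram matrix of the list in which each $\zeta_j$ is repeated $m$ times. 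This is the natural starting point for the even exponents $\beta=2m$.

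In a write-up, do not use $G$ for both the Gaussian vector and the Gram matrix, nor $\zeta_1$ for both a coordinate and a point.
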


\begin{proof}
The Bombieri--Weyl product is the Fischer product, for which powers of linear
forms reproduce:
\[
 \langle\ell_u^N,\ell_v^N\rangle_{\mathrm{BW}}=\langle v,u\rangle^N,
 \qquad u,v\in\C^2 .
\]
Polarizing this identity in each of its $N$ slots yields
\[
 \langle\ell_{u_1}\cdots\ell_{u_N},\ell_{v_1}\cdots\ell_{v_N}\rangle_{\mathrm{BW}}
 =\frac1{N!}\sum_{\pi\in\mathfrak S_N}\prod_{j=1}^N\langle v_{\pi(j)},u_j\rangle .
\]
Taking $u_j=v_j=\zeta_j$ turns the right-hand side into
$(N!)^{-1}\sum_\pi\prod_jG_{j\pi(j)}$, which is
$(N!)^{-1}\operatorname{per}G$.
\end{proof}

\begin{remark}[Why this argument is confined to $\beta=2$ and $N\leq3$]
\label{rem:three-point-scope}
Two special features were used, and both fail in general.

The exponent had to be an even integer for $e^{\beta\sum_jU}$ to be the squared
modulus of a \emph{holomorphic} form, which is what turns $B_\beta$ into a
Hermitian norm.  For non-even $\beta$ one is left with the hypergeometric
structure of \Cref{thm:sphere-two-point-exact}, and no such collapse occurs.

The cardinality had to satisfy $N\leq3$ for the resulting Hermitian form to be
\emph{affine} in the overlaps.  A direct computation shows that at $\beta=2$ the
quantity $B_2$ is affine in the $s_{ij}$ for $N=2$ and $N=3$ but not for
$N\geq4$: already at $N=4$ the second elementary symmetric function appears with
a nonzero coefficient.  Consequently the reduction to a distance-maximization
problem is genuinely special to these two cases, and
\Cref{prob:sphere-three-point} remains open for every $\beta\neq2$.
\end{remark}

Both restrictions suggest where to push next.  Fixing $\beta=2$ and raising $N$
keeps the Hermitian structure and loses only the affine dependence, which turns
the question into a finite-dimensional one.

\begin{problem}[Even exponents and higher cardinality]
\label{prob:even-exponent-tower}
Determine $\Theta_{N,2}(\bP^1)$ for $N\geq4$.  By
\Cref{thm:even-exponent-bombieri} this is the problem of minimizing the
Bombieri--Weyl norm of a product of $N$ unit linear forms, a finite-dimensional
algebraic optimization rather than an integral one.  Three questions, in
increasing order of difficulty.

First, prove \eqref{eq:even-exponent-balance}: that the minimizers are balanced.
As \Cref{rem:even-exponent-balance} explains, criticality is not enough, since the
coincident configuration is critical with maximal moment; a proof must use
minimality.

Second, explain the values $d_N=2,4,9,20,48,112$ of
\Cref{rem:even-exponent-platonic}.  These are attained by the regular
configurations for $2\leq N\leq7$, but the sequence does not continue in that
way:
at $N=8$ the minimizer is a \emph{squashed} square antiprism and the value is not
a recognizable rational.  Is there a closed form for $d_N$, and does the
combinatorial type of the minimizer stabilize?

Third, treat general even $\beta=2m$.  The same computation writes $B_{2m}$ as
$e^{mN}$ times a Hermitian norm on forms of degree $mN$, namely the
$\mu_{\rm FS}$-norm of $P_W^m$, so the problem becomes minimization of
$\|P_W^m\|$ over unit-root data.  Since Bombieri's inequality degrades under
powers, it is not clear whether the minimizers for different $m$ agree; deciding
that would in particular say whether the minimizing configuration depends on the
exponent, which is \Cref{prob:sphere-beta-independence}.
\end{problem}

\subsection{Even exponents: reduction to a Bombieri norm}
\label{subsec:even-exponent}

The mechanism behind \Cref{thm:sphere-three-point-beta2} does not need $N=3$.  At
$\beta=2$ and any cardinality the energy is a Hermitian norm, and identifying
which norm turns the variational problem into a finite-dimensional algebraic one.

\begin{theorem}[The exponent two is a Bombieri--Weyl norm]
\label{thm:even-exponent-bombieri}
Let $N\geq1$ and let $W=(w_1,\ldots,w_N)\in(\bP^1)^N$ have unit representatives
$\zeta_j$.  Put
\[
 P_W(\zeta):=\prod_{j=1}^N\det(\zeta,\zeta_j),
\]
a form of degree $N$.  Let $\|\cdot\|_{\mathrm{BW}}$ denote the Bombieri--Weyl
norm, defined by
\[
 \Bigl\|\sum_{k=0}^Na_k\zeta_0^{N-k}\zeta_1^k\Bigr\|_{\mathrm{BW}}^2
 =\sum_{k=0}^N\frac{|a_k|^2}{\binom Nk}.
\]
Then
\begin{equation}\label{eq:even-exponent-identity}
 B_2(W;\bP^1)=\frac{e^N}{N+1}\,\|P_W\|_{\mathrm{BW}}^2 ,
\end{equation}
and consequently
\begin{equation}\label{eq:theta-N-two}
 \Theta_{N,2}(\bP^1)
 =\frac{e^N}{N+1}\,
 \min\bigl\{\|\ell_1\cdots\ell_N\|_{\mathrm{BW}}^2:
 \ \|\ell_j\|_{\mathrm{BW}}=1\bigr\},
\end{equation}
the minimum being over products of $N$ unit linear forms.
\end{theorem}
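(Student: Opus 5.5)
The plan is to repeat the first step of the proof of \Cref{thm:sphere-three-point-beta2}, now for general $N$. Set $\ell_j(\zeta):=\det(\zeta,\zeta_j)$ and $P_W=\ell_1\cdots\ell_N$. For a unit representative $\zeta$ of $z$ we have $d_{\rm ch}(z,w_j)=|\ell_j(\zeta)|$. By \eqref{eq:sphere-two-point-green}, $U_{\rm FS}(z,w_j)=\tfrac12+\log|\ell_j(\zeta)|$, and therefore
\[
 \exp\Bigl(2\sum_{j=1}^NU_{\rm FS}(z,w_j)\Bigr)=e^N|P_W(\zeta)|^2 .
\]
The right-hand side depends only on $z$, since $P_W$ is homogeneous of degree $N$ and $|\zeta|=1$. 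Integrating gives $B_2(W;\bP^1)=e^N\int_{\bP^1}|P_W|^2\,d\mu_{\rm FS}$.

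Next, expand $P_W=\sum_{k=0}^Na_k\zeta_0^{N-k}\zeta_1^k$ and apply \Cref{lem:sphere-basis-B} with this $N$. The cross terms vanish, and each diagonal term contributes $|a_k|^2/\bigl((N+1)\binom Nk\bigr)$. This yields $\int|P_W|^2\,d\mu_{\rm FS}=(N+1)^{-1}\|P_W\|_{\mathrm{BW}}^2$, which is \eqref{eq:even-exponent-identity}.

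For \eqref{eq:theta-N-two}, I would take the infimum over $W$ and check that the two parametrizations match. Write $\zeta_j=(\alpha_j,\beta_j)$. Then $\ell_j(\zeta)=\beta_j\zeta_0-\alpha_j\zeta_1$, so $\|\ell_j\|_{\mathrm{BW}}^2=|\beta_j|^2+|\alpha_j|^2=1$; the binomial weights are all $1$ in degree one. Conversely, every linear form of unit Bombieri--Weyl norm arises as $\det(\cdot,\zeta)$ for some unit $\zeta$, namely $\zeta=(-c_1,c_0)$ when the form is $c_0\zeta_0+c_1\zeta_1$.

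Hence the set of products $\ell_1\cdots\ell_N$ of unit linear forms is exactly $\{P_W\}$, up to the unimodular phases of the representatives, and these phases do not affect the norm. The minimum is attained because $W\mapsto B_2(W)$ is continuous on the compact space $(\bP^1)^N$; alternatively, the set of unit linear forms is compact.

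No step is a real obstacle. The only point needing care is the phase/representative bookkeeping: one must confirm that $|P_W|$ is well defined on $\bP^1$ and that the unit-norm normalization of linear forms matches unit vectors $\zeta_j$.
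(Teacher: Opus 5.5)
Your proposal is correct and follows the paper's own proof essentially step for step. The steps are the pointwise identity $\exp\bigl(2\sum_j U_{\rm FS}(z,w_j)\bigr)=e^N|P_W(\zeta)|^2$, the coefficientwise evaluation of $\int|P_W|^2\,d\mu_{\rm FS}$ by the monomial orthogonality lemma, and the identification of unit Bombieri--Weyl linear forms with $\det(\,\cdot\,,\zeta)$ for unit $\zeta$; your explicit check $\ell_j=\beta_j\zeta_0-\alpha_j\zeta_1$ and your remark that the minimum is attained by compactness are small details the paper leaves implicit.
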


\begin{proof}
For a unit representative $\zeta$ of $z$ one has
$d_{\rm ch}(z,w_j)=|\det(\zeta,\zeta_j)|$, so
$U_{\rm FS}(z,w_j)=\tfrac12+\log|\det(\zeta,\zeta_j)|$ and
\[
 \exp\Bigl(2\sum_{j=1}^NU_{\rm FS}(z,w_j)\Bigr)=e^N|P_W(\zeta)|^2 .
\]
Hence $B_2(W;\bP^1)=e^N\int_{\bP^1}|P_W|^2\,d\mu_{\rm FS}$, and
\Cref{lem:sphere-basis-B} evaluates that integral coefficientwise as
$(N+1)^{-1}\sum_k|a_k|^2/\binom Nk$, which is
\eqref{eq:even-exponent-identity}.  A single linear form
$\det(\,\cdot\,,\zeta_j)$ has $\|\cdot\|_{\mathrm{BW}}=1$ because $\zeta_j$ is a
unit vector, and every unit linear form arises this way; taking the infimum over
$W$ gives \eqref{eq:theta-N-two}.
\end{proof}

Combining \eqref{eq:even-exponent-identity} with
\Cref{lem:bw-permanent} makes the objective completely explicit:
\begin{equation}\label{eq:even-exponent-permanent}
 B_2(W;\bP^1)=\frac{e^N}{(N+1)\,N!}\operatorname{per}G,
 \qquad G_{ij}=\langle\zeta_i,\zeta_j\rangle ,
\end{equation}
so that minimizing $B_2$ is minimizing the permanent of the Gram matrix of the
unit representatives.  This is the sharpest form of the reduction: the
configuration enters only through $G$, a positive semidefinite Hermitian matrix
of rank at most $2$ with unit diagonal.

Two consequences follow.  First, Bombieri's inequality for forms of degrees $m$
and $n$,
\[
 \|PQ\|_{\mathrm{BW}}^2
 \geq\binom{m+n}{m}^{-1}\|P\|_{\mathrm{BW}}^2\|Q\|_{\mathrm{BW}}^2,
\]
iterates to $\|\ell_1\cdots\ell_N\|_{\mathrm{BW}}^2\geq1/N!$, so
\begin{equation}\label{eq:theta-N-two-floor}
 \Theta_{N,2}(\bP^1)\geq\frac{e^N}{(N+1)\,N!} .
\end{equation}
Second, \eqref{eq:theta-N-two} is a minimization over a compact
$2N$-real-dimensional manifold of an explicit polynomial, so it is accessible to
exact computation for small $N$ in a way the original integral is not.

The minimizers appear to be balanced, but we are careful about the status of that
statement.

\begin{remark}[Balance of the minimizers]\label{rem:even-exponent-balance}
Write $x_1,\ldots,x_N\in S^2$ for the unit vectors corresponding to
$w_1,\ldots,w_N$ under the Hopf identification, so that $\sum_jx_j$ is, up to a
positive factor, the moment map of the diagonal $\mathrm{PSU}(2)$ action on
$(\bP^1)^N$.  In every case we have computed --- all $N\leq8$, from many random
starts --- the minimizing configuration satisfies
\begin{equation}\label{eq:even-exponent-balance}
 \sum_{j=1}^Nx_j=0 ,
\end{equation}
to the accuracy of the optimization.  We do \emph{not} claim a proof.  The natural
argument, that a $\mathrm{PSU}(2)$-invariant function has balanced critical points,
is false: the configuration with all $N$ points coincident is a critical point of
$\|P_W\|_{\mathrm{BW}}^2$ --- indeed its maximum, with value $1$ --- and its moment
$\sum_jx_j$ has norm $N$.  So \eqref{eq:even-exponent-balance} is a property of
minimizers specifically, and any proof must use minimality and not merely
criticality.  Establishing it is the first part of
\Cref{prob:even-exponent-tower}.
\end{remark}

\begin{remark}[The small cases are the Platonic configurations]
\label{rem:even-exponent-platonic}
Numerical minimization of \eqref{eq:theta-N-two}, from many random starts and
converging to the same value each time, gives the following exact rational values
of the Bombieri--Weyl minimum together with the minimizing configurations:
\[
 \begin{array}{c|cccccc}
  N & 2 & 3 & 4 & 5 & 6 & 7\\\hline
  \min\|\ell_1\cdots\ell_N\|_{\mathrm{BW}}^2
    & \tfrac12 & \tfrac14 & \tfrac19 & \tfrac1{20} & \tfrac1{48}
    & \tfrac1{112}\\[2pt]
  \text{type}
    & \text{antip.} & \text{equil.} & \text{tetra.}
    & \text{bipyr.}_3 & \text{octa.} & \text{bipyr.}_5
 \end{array}
\]
Each of these is the \emph{regular} configuration of its combinatorial type, each
is balanced in the sense of \Cref{rem:even-exponent-balance}, and the
corresponding optimal constants are
$\Theta_{N,2}(\bP^1)=e^N/\bigl((N+1)d_N\bigr)$ with $d_N=2,4,9,20,48,112$.  We
have verified these values to ten digits and identified the configurations by
direct comparison, but we do not claim a proof of optimality for $N\geq4$.

At $N=8$ the pattern of regular polyhedra \emph{breaks}.  The minimum is
$0.00365906$, attained not by the cube ($1/270$), nor by the hexagonal bipyramid
($1/256$), nor by the regular square antiprism ($0.00365961$), but by a square
antiprism whose two squares are pulled slightly towards the equator: optimizing
the polar angle $\theta$ of the antiprism gives
$\cos\theta=0.5737202$ against the regular value
$1/\sqrt3=0.5773503$, and this recovers the global minimum exactly.  The
minimizing value is not a recognizable rational.  So the sequence
$2,4,9,20,48,112$ does not continue through a regular polyhedron, and any
conjecture that the minimizers are always the regular configurations is false as
stated.

Note finally that $\Theta_{N,2}$ stays close to $1.2$ across this whole range
rather than growing, consistent with the two-sided bounds of
\Cref{thm:theta-uniform-noncollapse}.
\end{remark}

\subsection{Problems suggested by the exact model}
\label{subsec:sphere-problems}

Being complete, the $N=2$ computation is instructive about what a general answer
would have to look like.  Three features are worth isolating.  First, the optimal
constant is a ratio of gamma factors times a hypergeometric function of a
\emph{single} geometric variable, the squared chordal distance to the antipode.
The minimizer is unique modulo the isometry group and the labelling.  And the
Hessian is nondegenerate transverse to the group orbit, so the minimum is
Morse--Bott.  Each of these suggests a question at the next cardinality.

\begin{problem}[The three-point round sphere]\label{prob:sphere-three-point}
Determine $\Theta_{3,\beta}(\bP^1)$ and classify its minimizers for
$\beta\neq2$.  The case $\beta=2$ is settled by
\Cref{thm:sphere-three-point-beta2}, where the minimizers are the equilateral
great-circle triangles; numerical minimization supports the same configuration at
every other exponent tested.  A proof would presumably
reduce $B_\beta$ to the three pairwise spinor overlaps
$s_{ij}=1-d_{\rm ch}(w_i,w_j)^2$, giving an Appell or Lauricella series in
$(s_{12},s_{13},s_{23})$, and then exhibit a positivity or Schur-convexity
structure forcing the equilateral Gram matrix.  Symmetry of the candidate is not
by itself an argument: by \Cref{thm:sphere-two-point-exact} the $N=2$ minimizer
is also the symmetric configuration, but that conclusion came from strict
monotonicity in one variable, and no such reduction is available for $N=3$.
\end{problem}

\begin{problem}[Nondegeneracy for every $N$]\label{prob:sphere-morse-bott}
Is every minimizer of $B_\beta(\,\cdot\,;\bP^1)$ Morse--Bott transverse to its
$\mathrm{PSU}(2)$ orbit, for every $N$ and every $\beta>0$?
\Cref{cor:sphere-two-point-morse-bott} proves this for $N=2$ with the explicit
normal eigenvalue $\beta^2\Theta_{2,\beta}/4$.  A uniform answer would give
local uniqueness modulo symmetry.  Since the minimizing set is compact and would
then be covered by finitely many such neighbourhoods, it would follow that the
minimizers form a finite union of $\mathrm{PSU}(2)$ orbits.
\end{problem}

\begin{problem}[Exponent-independence of the minimizer]
\label{prob:sphere-beta-independence}
For $N=2$ the minimizing configuration is the antipodal pair for \emph{every}
$\beta>0$: the exponent affects the optimal constant but not the optimal
configuration.  For which $X$ and $N$ does this persist?  Equivalently, when is
the argmin of $B_\beta(\,\cdot\,;X)$ independent of $\beta$?  By
\Cref{prop:theta-minimizer-endpoint-selection} such independence would force the
same configuration to minimize both the quadratic Green energy $\mathcal E_N$
and the pointwise endpoint $\|Y_W\|_\infty$, so a natural first step is to decide
whether those two variational problems have a common minimizer.  A negative
answer for some $(X,N)$ would exhibit a genuine exponent-driven transition
between minimizing configurations, which by
\Cref{prop:theta-Gibbs-envelope} is exactly the mechanism producing a failure of
differentiability of $\mathcal F_N$.
\end{problem}

\section{Minimizing configurations}\label{sec:minimizers}

For each fixed $N$ and $\beta$ the infimum in \eqref{eq:B-def} is attained.  We
prove that minimizers exist, that they contain no collisions, that they inherit
the optimal $W_1$ scale from \Cref{sec:quantization}, and that they satisfy an
intrinsic force-balance equation.  What remains open is genuinely microscopic
and is stated in
\Cref{prob:minimizer-characterization,prob:minimizer-separation}.

\begin{problem}[Characterization of minimizers]
\label{prob:minimizer-characterization}
Classify configurations which minimize $B_\beta(W;X)$, determine whether
they are unique modulo symmetries and permutations, and compare them with
Fekete points, zeros of extremal sections, and energy-minimizing divisors.
\end{problem}

Existence, absence of collisions, optimal-scale equidistribution, and the
first-order balance law can all be settled here.  The unresolved content of
the problem is therefore genuinely microscopic.  The fixed-$N$ endpoint
selection in \Cref{prop:theta-minimizer-endpoint-selection} identifies the
limiting variational problems as $\beta\downarrow0$ and
$\beta\to\infty$, but it neither classifies their minimizers nor addresses
the simultaneous large-$N$ limit.

\begin{proposition}[Existence of a minimizing configuration]
\label{prop:minimizer-exists}
For every fixed $N\geq1$ and $\beta>0$, the infimum defining
$\Theta_{N,\beta}(X)$ is attained by some $W\in X^N$.
\end{proposition}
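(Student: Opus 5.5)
The plan is to show that $W\mapsto B_\beta(W;X)$ is continuous on the compact space $X^N$ and then invoke the extreme value theorem. Since $X$ is a compact surface, $X^N$ is compact in the product topology, so continuity is all we need.

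First, by \Cref{lem:green-joint-local}, $U_X$ is smooth off the diagonal and of the form $\log|x-q|+R_\xi(x,q)$ near it, with $R_\xi$ jointly smooth. Consequently $M_X:=\sup_{X\times X}U_X<\infty$, and $e^{\beta U_X(z,w)}$ extends to a continuous nonnegative function on $X\times X$ that vanishes on the diagonal. Next, since
\[
 e^{\beta V_W(z)}=\prod_{j=1}^N e^{\beta U_X(z,w_j)},
\]
the integrand $Y_W(z)^\beta$ is a finite product of continuous functions of $(z,W)$. Hence it is jointly continuous on $X\times X^N$ and bounded by $e^{\beta N M_X}$.

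Now let $W_k\to W$ in $X^N$. Then $Y_{W_k}^\beta\to Y_W^\beta$ pointwise on $X$; in fact the convergence is uniform, by uniform continuity on the compact space $X\times X^N$. Dominated convergence with the constant dominating function $e^{\beta NM_X}$, which is integrable since $\mu(X)=1$, gives $B_\beta(W_k;X)\to B_\beta(W;X)$. So $B_\beta(\,\cdot\,;X)$ is a real-valued continuous function on the compact space $X^N$, and it attains its infimum $\Theta_{N,\beta}(X)$.

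The only delicate point is the behaviour at collisions and at points $z$ equal to a node, where $V_W=-\infty$. This is handled entirely by the continuous extension by zero of $e^{\beta U_X}$ across the diagonal. The extension is also valid when several nodes coincide, because the product of continuous factors stays continuous. Finiteness of the minimum is automatic from the upper bound, and $\Theta_{N,\beta}\ge1$ follows from Jensen's inequality.
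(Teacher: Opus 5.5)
Your proposal is correct and follows essentially the same route as the paper: the continuous extension of $e^{\beta U_X}$ by zero across the diagonal via \Cref{lem:green-joint-local}, the uniform bound $e^{\beta N M_X}$, and dominated convergence give continuity of $B_\beta(\,\cdot\,;X)$ on the compact space $X^N$, after which the extreme value theorem applies. Your observation that the convergence of the integrands is in fact uniform, by uniform continuity on $X\times X^N$, is a harmless strengthening that makes the dominated convergence step unnecessary.
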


\begin{proof}
The Green kernel is continuous off the diagonal and is bounded above on
$X\times X$; its diagonal logarithmic singularity tends to $-\infty$.
By \Cref{lem:green-joint-local}, in joint local coordinates
$U_X(z,w)=\log|z-w|+R(z,w)$ with $R$ smooth.  Hence
$e^{\beta U_X(z,w)}=|z-w|^\beta e^{\beta R(z,w)}$ extends continuously
to the diagonal by zero.  Products of these extended factors are therefore
continuous in $(z,W)$.
If $W^{(k)}\to W$ in $X^N$, then the integrands defining
$B_\beta(W^{(k)};X)$ converge pointwise for every $z\in X$.  If
$M_X:=\sup_{X\times X}U_X<\infty$, then every
integrand is bounded above by the integrable constant
$\exp(\beta N M_X)$, uniformly for all configurations in $X^N$.  Dominated
convergence
therefore shows that $B_\beta(\,\cdot\,;X)$ is continuous on the compact
space $X^N$, so it attains its minimum.
\end{proof}

\begin{corollary}[Minimizers lie in a fixed energy class]
\label{cor:minimizer-in-class}
Fix $\beta>0$ and, for each $N$, let $W_N^*$ attain $\Theta_{N,\beta}(X)$.  Then
\[
 B_\beta(W_N^*;X)=\Theta_{N,\beta}(X)\leq K_\beta:=e^{\beta C_{X,\mu}}
\]
for every $N$, where $C_{X,\mu}$ is the constant of
\Cref{thm:packing-input}.  Thus $(W_N^*)_N\in\mathcal B_\beta(K_\beta)$, and
every result of
\Cref{sec:discrepancy,sec:quantization,sec:mesoscopic} applies to minimizing
configurations.
\end{corollary}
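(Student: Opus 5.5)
The plan is to compare the minimizer with the attainability configurations of \Cref{thm:packing-input}. First, \Cref{prop:minimizer-exists} guarantees that a minimizer $W_N^*\in X^N$ exists for each $N$, so $B_\beta(W_N^*;X)=\Theta_{N,\beta}(X)$ holds by definition of the infimum. This identification is the first step.

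Next, \Cref{thm:packing-input} gives, for the same $N$, a configuration $W_N$ with $\sup_X V_{W_N}\le C_{X,\mu}$. For this configuration the integrand $e^{\beta V_{W_N}}$ is bounded pointwise by $e^{\beta C_{X,\mu}}$. Since $\mu(X)=1$, integrating gives
\[
 B_\beta(W_N;X)\le e^{\beta C_{X,\mu}}=K_\beta .
\]
Minimality then yields the chain $\Theta_{N,\beta}(X)\le B_\beta(W_N;X)\le K_\beta$. The constant $C_{X,\mu}$ does not depend on $N$, so this bound is uniform in $N$.

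Finally, \Cref{def:energy-class} asks only for the uniform bound \eqref{eq:energy-hypothesis}, so the sequence $(W_N^*)_N$ belongs to $\mathcal B_\beta(K_\beta)$. Every result of \Cref{sec:discrepancy,sec:quantization,sec:mesoscopic} was proved under \eqref{eq:energy-hypothesis} alone, with constants depending only on $(X,\mu,g)$, $\beta$ and $K$. Those results therefore apply verbatim to the minimizers.

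No step is a genuine obstacle. The only point needing care is that the constant supplied by \Cref{thm:packing-input} is independent of $N$, and that theorem states this explicitly.
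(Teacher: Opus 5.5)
Your proposal is correct and follows essentially the same route as the paper's proof. In both, the packing configurations of \Cref{thm:packing-input} serve as admissible competitors, which gives $\Theta_{N,\beta}(X)\le e^{\beta C_{X,\mu}}$ uniformly in $N$. Attainment comes from \Cref{prop:minimizer-exists}, and membership in $\mathcal B_\beta(K_\beta)$ is then exactly \Cref{def:energy-class}.
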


\begin{proof}
The configurations of \Cref{thm:packing-input} are admissible competitors, so
$\Theta_{N,\beta}(X)\leq B_\beta(W_N;X)\leq e^{\beta C_{X,\mu}}$, and the
minimum is attained by \Cref{prop:minimizer-exists}.  The bound is independent
of $N$, which is \Cref{def:energy-class}.
\end{proof}

Every macroscopic conclusion of the preceding sections therefore applies to
minimizers, with no further work.  We record the transport statement explicitly,
since it is the one most often wanted.

\begin{corollary}[Optimal $W_1$ scale for minimizers]
\label{cor:minimizer-equidistribution}
For each $N$, choose a minimizing configuration $W_N^*$ for
$\Theta_{N,\beta}(X)$ and let $\nu_N^*$ be its empirical measure.  Then
\begin{equation}\label{eq:minimizer-Wasserstein}
 c_XN^{-1/2}\leq W_1(\nu_N^*,\mu)\leq C_{X,\mu,\beta}N^{-1/2}.
\end{equation}
They also satisfy the coarse no-hole estimate
\begin{equation}\label{eq:minimizer-coarse-covering}
 \mathcal R(W_N^*)\leq C_{X,\mu,\beta}N^{-1/4}.
\end{equation}
In particular, every sequence of minimizers converges to $\mu$ at the
optimal quantization scale.
\end{corollary}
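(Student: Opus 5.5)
This is a direct assembly of earlier results, applied to minimizers through \Cref{cor:minimizer-in-class}. For each $N$, fix a minimizer $W_N^*$; it exists by \Cref{prop:minimizer-exists}. By \Cref{cor:minimizer-in-class},
\[
 B_\beta(W_N^*;X)=\Theta_{N,\beta}(X)\le K_\beta:=e^{\beta C_{X,\mu}},
\]
with $K_\beta$ independent of $N$. Hence the sequence $(W_N^*)_N$ lies in $\mathcal B_\beta(K_\beta)$ and satisfies the hypotheses of \Cref{cor:Wasserstein} with $K=K_\beta$.

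For the upper bound in \eqref{eq:minimizer-Wasserstein}, apply \Cref{cor:Wasserstein}. It gives
\[
 W_1(\nu_N^*,\mu)\le C_{X,K_\beta,\beta}N^{-1/2}.
\]
Since $K_\beta$ depends only on $(X,\mu,\beta)$ through $C_{X,\mu}$, this constant is of the form $C_{X,\mu,\beta}$. For the lower bound, $\nu_N^*$ is a probability measure supported on at most $N$ points, so \Cref{cor:Wasserstein-lower} gives $W_1(\nu_N^*,\mu)\ge c_XN^{-1/2}$. This holds regardless of minimality or collisions.

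For the covering estimate \eqref{eq:minimizer-coarse-covering}, apply \Cref{cor:energy-covering-radius} with $K=K_\beta$. Since $\log K_\beta=\beta C_{X,\mu}$, it yields
\[
 \mathcal R(W_N^*)\le C_X\bigl((1+\beta C_{X,\mu})/(\beta N)\bigr)^{1/4}
\]
for all sufficiently large $N$. That corollary is stated only for large $N$, so the finitely many remaining values of $N$ need a separate argument. For these, $\mathcal R(W_N^*)\le\operatorname{diam}(X)$, and enlarging the constant so that $C_{X,\mu,\beta}N^{-1/4}\ge\operatorname{diam}(X)$ for those $N$ covers them.

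The closing sentence, that minimizers converge to $\mu$ at the optimal quantization scale, is the two-sided bound \eqref{eq:minimizer-Wasserstein} restated. Weak convergence follows because $W_1\to0$ metrizes weak convergence on the compact surface.

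There is no substantive obstacle. The only points needing care are tracking the dependence of the constants on $K_\beta$, and thus on $(X,\mu,\beta)$, and handling the small-$N$ cases of the covering bound.
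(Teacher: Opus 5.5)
Your proposal is correct and matches the paper's proof: it places the minimizers in $\mathcal B_\beta(K_\beta)$ via \Cref{cor:minimizer-in-class}, then applies \Cref{cor:Wasserstein}, \Cref{cor:Wasserstein-lower}, and \Cref{cor:energy-covering-radius}. Your explicit treatment of the finitely many small $N$ in the covering bound, using $\mathcal R(W_N^*)\le\operatorname{diam}(X)$ and a threshold depending only on $(X,\mu,\beta)$, is a detail the paper leaves implicit, and you handle it correctly.
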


\begin{proof}
By \Cref{cor:minimizer-in-class},
$B_\beta(W_N^*;X)=\Theta_{N,\beta}(X)\leq K_\beta$ with $K_\beta$ independent
of $N$, so the minimizing configurations form a family in
$\mathcal B_\beta(K_\beta)$.  The upper bound therefore follows from
\Cref{cor:Wasserstein}, and the lower bound is
\Cref{cor:Wasserstein-lower}.  The covering estimate follows from
\Cref{cor:energy-covering-radius} applied to the same family.
\end{proof}

\begin{theorem}[Collisions are not local minima]
\label{thm:minimizer-simple}
No configuration containing a collision is a local minimum of
$B_\beta(\,\cdot\,;X)$ on $X^N$.  In particular, every minimizing
configuration for $\Theta_{N,\beta}(X)$ consists of $N$ distinct points.
\end{theorem}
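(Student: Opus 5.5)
The plan is to perturb one collision cluster by a small \emph{balanced} splitting and to show that the average of $B_\beta$ over a circle of perturbations drops strictly below the collision value. Then some perturbation in every neighbourhood has strictly smaller energy, so the configuration is not a local minimum.

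\textbf{Setup.} Suppose $W$ has a site $p$ of multiplicity $k\geq2$, say $w_1=\cdots=w_k=p$, with the remaining points $W'$ distinct from $p$. Take a holomorphic chart $\xi$ centred at $p$, and write $\omega=e^{2\pi i/k}$. For $a\in\C$ small, let $W_a$ replace $w_1,\ldots,w_k$ by the points $\xi^{-1}(\omega^ja)$, $j=0,\ldots,k-1$, leaving $W'$ fixed, so $W_0=W$. By \Cref{lem:green-joint-local}, for $z$ in the chart,
\[
 \sum_{j}U_X\bigl(z,\xi^{-1}(\omega^ja)\bigr)
 =\log|x^k-a^k|+S(x,a),
 \qquad S(x,a):=\sum_jR_\xi(x,\omega^ja).
\]
For $z$ outside a fixed smaller disc, the whole sum is smooth in $a$. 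Thus
\[
 B_\beta(W_a)=\int |x^k-a^k|^\beta e^{\beta S(x,a)}\,Y_{W'}^\beta\,d\mu ,
\]
with the obvious modification off the chart. The goal is to expand the circle average $M(r)$ of $B_\beta(W_a)$ over $|a|=r$ and show $M(r)<B_\beta(W)$ for small $r>0$.

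\textbf{Step 1: the regular part.} Taylor expand $S$ in $a$.

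\begin{itemize}
\item The linear term is $\sum_j\omega^j\,\partial_qR_\xi(x,0)\,a+\text{conj.}$. This vanishes identically for $k\geq2$.
\item In the quadratic part, the pure terms $a^2,\bar a^2$ have coefficients $\sum_j\omega^{\pm2j}$. These vanish for $k\geq3$, and average to zero over $\arg a$ when $k=2$.
\item What survives the phase average is $k\,|a|^2\,\partial_q\bar\partial_qR_\xi(x,0)$.
\end{itemize}

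By \eqref{eq:green-pole-variable}, off the diagonal $\ddc_qR_\xi=-\mu$ in the pole variable. Hence $\partial_q\bar\partial_qR_\xi(x,0)=-\pi\rho(p)/2$ (up to the normalisation of $\ddc$), where $\rho$ is the density of $\mu$ at $p$. The key point is that this is \emph{independent of $x$} and strictly negative. Because the linear term vanishes identically, no positive $\beta^2|\nabla S|^2$ term appears at order $|a|^2$. So the phase average of $e^{\beta S(x,a)}$ equals $e^{\beta S(x,0)}\bigl(1-c\,\beta k\rho(p)r^2+O(r^3)\bigr)$, uniformly in $x$.

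\textbf{Step 2: the singular factor and cross terms.} The factor $|x^k-a^k|^\beta$ depends on $a$ only through $a^k$. Consequently:

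\begin{itemize}
\item Its phase-averaged deviation from $|x|^{k\beta}$, integrated against a bounded density, is $O(r^{k\beta+2})+O(r^{2k})$. The first comes from scaling $x=r u$ near $0$; the second from the far region, where the deviation is $O(|a|^{2k}|x|^{k\beta-2k})$ after averaging. Both are $o(r^2)$ since $k\beta>0$ and $k\geq2$.
\item Products of the $a^k$-dependent part with the $O(|a|)$, $O(|a|^2)$ parts of $e^{\beta S}$ carry phase factors $e^{i(\pm k\pm1)\theta}$ or $e^{i(\pm k\pm2)\theta}$. These average to zero, apart from the case $k=2$ with $e^{\pm 2i\theta}$ against $a^2$. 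In that case the coefficient of the pure $a^2$-term of $S$ is $0$ by Step 1, so the product is higher order anyway.
\item The region off the chart contributes only smooth terms, handled by the same Taylor argument. There the sum over $j$ is again $k\times$ the Laplacian in the pole, equal to $-c\rho(p)$ by \eqref{eq:green-pole-variable}.
\end{itemize}

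\textbf{Conclusion and main obstacle.} Steps 1 and 2 give
\[
 M(r)=B_\beta(W)\bigl(1-c\,\beta k\rho(p)\,r^2\bigr)+o(r^2),
\]
so $M(r)<B_\beta(W)$ for small $r>0$. Some $W_a$ with $|a|=r$, arbitrarily close to $W$ in $X^N$, therefore has strictly smaller energy. Applying this to a minimizer, which exists by \Cref{prop:minimizer-exists}, yields the second assertion.

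The hardest step is the bookkeeping in Step 2. One must control the singular factor uniformly, via the scaling $x=ru$, a tail estimate, and dominated convergence with the bound $e^{\beta NM_X}$, and must verify that every remainder is genuinely $o(r^2)$. That the singular contribution is of higher order than $r^2$ rests precisely on $k\beta+2>2$. I would carry this out first for $k=2$, where the pure $a^2$-terms require the extra phase-averaging check, and then note that $k\geq3$ only makes the cancellations stronger.
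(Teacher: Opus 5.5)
Your proposal is correct and is essentially the paper's argument. Both average $B_\beta$ over a circle of symmetric splittings and let the balanced velocities kill the first-order term. The pole-variable equation $\ddc_qU_X=-\mu$ then gives the strictly negative $r^2$-coefficient $-\tfrac{\pi}{2}\beta k\,m_\xi(0)B_\beta(W)$ (your $\rho(p)=m_\xi(0)$; for $k=2$ this is the paper's $-\pi\beta m_\xi(0)F(0)$), and the singular factor contributes only $o(r^2)$. The paper differs only in execution: it splits just two of the coincident points, with further copies left in the bounded factor $Q$. It also controls the singular factor by an inner disk of radius $t^\alpha$ plus a fourth-order circular Taylor remainder, rather than by your factorization of $|x^k-a^k|^\beta$ through its dependence on $a^k$ alone.

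One slip in your cross-term bullet: for $k=2$ the pure $a^2$-coefficient of $S$ is $\partial_q^2R_\xi(x,0)$, which is generally nonzero (it only averages out), so "the coefficient is $0$ by Step 1" is wrong. The conclusion still holds by a simpler route. Use the uniform bound $|e^{\beta S(x,a)}-e^{\beta S(x,0)}|\le Cr^2$, which holds because the linear term vanishes identically. Combine it with $\int|P(x,a)-P(x,0)|\,d\mu\to0$ for $P(x,a):=|x^k-a^k|^\beta$. Together these make all cross terms $o(r^2)$ at once.
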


\begin{proof}
Suppose that $w_1=w_2=w$.  Choose a holomorphic coordinate $\xi$ centered
at $w$ and, for small $a\in\C$, replace these two points by
$w_+(a)=\xi^{-1}(a)$ and $w_-(a)=\xi^{-1}(-a)$, keeping all other points
fixed.  Let $F(a)$ be the resulting value of $B_\beta$.  We prove an actual
descent statement by averaging $F$ over the parameter circle, rather than
passing Hessians through a singular regularization.

Write $d\mu(x)=m_\xi(x)dA(x)$ in the coordinate, where
$dA=(i/2)dx\wedge d\bar x$ and $m_\xi(0)>0$, and put
\[
 \mathcal M_tF:=\frac1{2\pi}\int_0^{2\pi}F(te^{i\theta})\,d\theta.
\]
We claim that
\begin{equation}\label{eq:collision-circular-variation}
 \mathcal M_tF-F(0)
 =-\pi\beta m_\xi(0)F(0)t^2+o(t^2).
\end{equation}
Since $F(0)=B_\beta(W;X)\geq1$, this implies
$\mathcal M_tF<F(0)$ for small $t>0$, and hence at least one point of the
circle $|a|=t$ has $F(a)<F(0)$.

To prove the claim, apply \Cref{lem:green-joint-local} and shrink the
coordinate disk so that the local Green expansion has the form
\[
 U_X\bigl(\xi^{-1}(x),\xi^{-1}(q)\bigr)
 =\log|x-q|+R(x,q),
\]
with $R$ smooth on the bidisk.  The integrand defining $F(a)$ has there the
form
\begin{equation}\label{eq:collision-local-integrand}
 G(a,x)=Q(x)e^{\beta(R(x,a)+R(x,-a))}|x^2-a^2|^\beta,
\end{equation}
where
\[
 Q(x)=\exp\!\left(\beta\sum_{j=3}^N
 U_X(\xi^{-1}(x),w_j)\right)
\]
is bounded.  Indeed, every factor $e^{\beta U_X(\,\cdot\,,w_j)}$ extends
continuously by zero at its pole.  This observation also covers collisions
of multiplicity greater than two.

Choose
\[
 \frac1{1+\beta}<\alpha<1,
 \qquad \rho_t=t^\alpha.
\]
Because $t=o(\rho_t)$, for $|a|=t$ and $|x|\leq\rho_t$ both $G(a,x)$ and
$G(0,x)$ are at most $C\rho_t^{2\beta}$.  The inner disk therefore
contributes at most
\begin{equation}\label{eq:collision-inner-error}
 C\rho_t^{2\beta+2}=o(t^2)
\end{equation}
to the absolute difference between $\mathcal M_tF$ and $F(0)$.

On $|x|\geq\rho_t$, the dependence on $a$ is smooth.  Circular Taylor
expansion gives
\[
 \frac1{2\pi}\int_0^{2\pi}G(te^{i\theta},x)\,d\theta
 =G(0,x)+\frac{t^2}{4}\Delta_aG(0,x)+\mathcal R_t(x).
\]
On the coordinate disk, differentiation of
\eqref{eq:collision-local-integrand} yields
\[
 |\mathcal R_t(x)|\leq
 Ct^4\bigl(1+|x|^{2\beta-4}\bigr),
\]
while outside the disk the remainder is $O(t^4)$.  Hence
\[
 \int_{|x|\geq\rho_t}|\mathcal R_t(x)|\,d\mu(x)
 \leq
 \begin{cases}
  Ct^4\rho_t^{2\beta-2},&0<\beta<1,\\
  Ct^4|\log\rho_t|,&\beta=1,\\
  Ct^4,&\beta>1,
 \end{cases}
 =o(t^2).
\]

If $Q(x)=0$ because $x$ is one of the fixed nodes, then
$G(a,x)\equiv0$ as a function of $a$, so every parameter identity below is
trivially valid there.  We may therefore exclude this finite set and write
$G=e^\Phi$ where $G>0$.  The resulting Laplacian identity holds for almost
every $x$, which is all that is needed after integration.

For $x\neq0$, the opposite parameter velocities give
$d_a\Phi(0,x)=0$, where $G=e^\Phi$.  More explicitly, the Green equation in
the pole variable gives
\[
 \left.dd^c_aU_X\bigl(\xi^{-1}(x),\xi^{-1}(a)\bigr)\right|_{a=0}
 =-m_\xi(0)dA(a),
\]
because $x\neq0$.  The term with pole $\xi^{-1}(-a)$ is the pullback by the
holomorphic map $a\mapsto-a$, which gives the same value at the origin.
Therefore
\[
 dd^c_a\Phi(0,x)=-2\beta m_\xi(0)dA(a).
\]
Since $d_a\Phi(0,x)=0$, the chain rule gives
$dd^c_aG(0,x)=G(0,x)dd^c_a\Phi(0,x)$.  Thus, because
$dd^c_af=(\Delta_af/2\pi)dA(a)$ in our convention,
\begin{equation}\label{eq:collision-far-laplacian}
 \Delta_aG(0,x)=-4\pi\beta m_\xi(0)G(0,x).
\end{equation}
The same identity holds off the coordinate disk, where the dependence on
$a$ is smooth, because it used only the pole-variable Green equation and the
opposite velocities.  Moreover,
$\int_{|x|<\rho_t}G(0,x)d\mu(x)=O(\rho_t^{2\beta+2})=o(1)$.
Integrating the circular Taylor expansion, using
\eqref{eq:collision-inner-error} and
\eqref{eq:collision-far-laplacian}, and adding the
smooth complement of the coordinate disk proves
\eqref{eq:collision-circular-variation}.  Thus the original colliding
configuration is not a local minimum.
\end{proof}

We may therefore differentiate at every minimizing configuration.
Variation with respect to $w_j$ gives the intrinsic balance condition
\begin{equation}\label{eq:Euler-Lagrange-minimizer}
 \int_X \nabla_{w_j}U_X(z,w_j)
 \exp\!\left(\beta\sum_{i=1}^NU_X(z,w_i)\right)d\mu(z)=0,
 \qquad j=1,\ldots,N.
\end{equation}
Indeed, for a tangent vector $v\in T_{w_j}X$, differentiating along a curve
with velocity $v$ gives
\[
 \frac{d}{dt}B_\beta(w_1,\ldots,w_j(t),\ldots,w_N;X)\bigg|_{t=0}
 =\beta\left\langle
 \int_X\nabla_{w_j}U_X(z,w_j)e^{\beta\sum_iU_X(z,w_i)}d\mu(z),v
 \right\rangle.
\]
At a minimizer this vanishes for every $v$; since $\beta>0$, the covector
inside the pairing must be zero, which is
\eqref{eq:Euler-Lagrange-minimizer}.
The differentiation is legitimate because the local factor
$e^{\beta U_X(z,w_j)}\asymp d(z,w_j)^\beta$ makes the singular first
variation locally integrable for every $\beta>0$.  To justify the passage
from difference quotients to that first variation, choose a coordinate disk
containing $w_j$ but no other node.  Along a curve $w_j(t)$ the local
integrand has the form
\[
 A(t,x)|x-q(t)|^\beta,
 \qquad q(0)=0,\qquad |q(t)|\leq C|t|,
\]
where $A$ and $\partial_tA$ are uniformly bounded; this uses
\Cref{lem:green-joint-local}.  If $|x|>2C|t|$, the mean-value theorem bounds
the difference quotient by
\[
 C\bigl(1+|x|^{\beta-1}\bigr).
\]
If $|x|\leq2C|t|$ and $0<\beta<1$, the $\beta$-H\"older continuity of
$r\mapsto r^\beta$ bounds it, for almost every $x$, by
$C|t|^{\beta-1}\leq C'|x|^{\beta-1}$; here the inequality reverses in the
desired direction because $\beta-1<0$ and $|x|\leq2C|t|$.  For
$\beta\geq1$ the same difference
quotients are locally bounded.  Thus in every case they admit the
$t$-independent local majorant
$C(1+|x|^{\beta-1})$, which belongs to $L^1(\C)$ near the origin.  Away
from the disk all parameter dependence is smooth and uniformly bounded.
Dominated convergence now proves the displayed first-variation formula.

Equation \eqref{eq:Euler-Lagrange-minimizer} resembles a weighted force-balance
law, and it provides a starting point for comparison with Fekete points and
$\beta$-ensembles.  Whether the system is nondegenerate modulo permutations and
geometric symmetries is the remaining local question.

\begin{problem}[Quantitative separation of minimizers]
\label{prob:minimizer-separation}
Does there exist $c_{X,\mu,\beta}>0$ such that, for every $N\geq2$, every minimizing
configuration satisfies
\[
 \min_{i\neq j}d(w_i,w_j)\geq c_{X,\mu,\beta}N^{-1/2}?
\]
Such a bound would upgrade the collision theorem to the natural microscopic
scale and supply the separation half of the Delone input needed for a planar
blow-up analysis of minimizers.  A matching $O(N^{-1/2})$ covering-radius
estimate would remain open; \eqref{eq:minimizer-coarse-covering} currently
gives only $O(N^{-1/4})$.  Separation does not follow from the optimal
$W_1$ estimate alone, because transport distance is insensitive to a small
number of tightly clustered pairs.
\end{problem}

There is a nonquantitative fixed-$N$ version.  For fixed $N\geq2$ and $\beta$,
the set of minimizers is a closed subset of the compact space $X^N$, hence
compact.  By \Cref{thm:minimizer-simple} it is disjoint from every diagonal
$\{w_i=w_j\}$.  The continuous function
$W\mapsto\min_{i\neq j}d(w_i,w_j)$ therefore has a strictly positive minimum
on the minimizer set.  The open content of
\Cref{prob:minimizer-separation} is precisely to control this minimum by
$cN^{-1/2}$ with $c$ independent of $N$.

\section{Microscopic structure: what remains open}\label{sec:microscopic}

The results of \Cref{sec:quantization,sec:mesoscopic} settle the macroscopic
and mesoscopic behavior of any family in $\mathcal B_\beta(K)$, and
\Cref{sec:minimizers} settles existence, simplicity, and the first-order
balance law for minimizers.  What the exponential moment does \emph{not} see is
the arrangement of the points at the natural scale $N^{-1/2}$, and this is
exactly what a sharp limit for $\Theta_{N,\beta}$ would require.  We close by
describing the obstruction precisely, so that the open problems above are not
mistaken for routine gaps.

Weak convergence $\nu_{W_N}\to\mu$, even at the optimal $W_1$ rate, is
insensitive to the microscopic pattern: by \Cref{rem:sharp-and-nonsharp}\,(iv)
it does not even exclude clusters.  A limiting constant should instead be
governed by a local cell problem in the plane.  The natural formulation is a
blow-up.  Fix base points $x_N\in X$, express $W_N$ in coordinates rescaled by
$\sqrt N$ at $x_N$, and ask for subsequential limits of the resulting planar
point sets together with their renormalized logarithmic potentials.  Two
ingredients are needed, and only one is available under
\eqref{eq:energy-hypothesis}.

One is available: the energy bound itself passes to the limit and controls the
far-field contribution in an averaged sense.  The other is not.  What is missing
is \emph{microscopic compactness} of the rescaled configurations:
one needs both a uniform separation lower bound and a uniform covering upper
bound at scale $N^{-1/2}$, that is, a uniform Delone property in the rescaled
metric.  Neither follows from \eqref{eq:energy-hypothesis}.  The covering
radius supplied by \Cref{cor:energy-covering-radius} is only $O(N^{-1/4})$, and
separation fails to follow at all.

For the holomorphic family of \cite{PaperA} both ingredients \emph{are}
available --- the zero divisors are simple, separated at scale $N^{-1/2}$, and
covering at that scale, with uniform Voronoi geometry --- so that family does
have Delone blow-up limits, and the exponential energy decomposes exactly over
its Voronoi cells.  That is a statement about one particular family, not about
minimizers, and it does not identify a limiting cell: the local norm model
available there gives comparability of the cell energies rather than their
convergence. A genuine local $\Gamma$-limit would require four further
ingredients. One needs
convergence of the rescaled representatives, control of the Green contribution from
outside the rescaled window, a recovery-sequence construction patching planar cells
across varying tangent charts, and a comparison of the resulting cell energy with
the infimum defining $\Theta_{N,\beta}$.  Only after
all of those steps would a specific limiting pattern, such as a triangular
lattice, be justified; symmetry of a candidate configuration is not by itself
evidence for it.

For minimizers the situation is more basic still: the natural-scale separation
asked for in \Cref{prob:minimizer-separation} is precisely the separation half
of the Delone input, and the matching $O(N^{-1/2})$ covering bound is likewise
open.  This is why \Cref{prob:optimal-constants} and
\Cref{prob:minimizer-characterization} are stated as they are.  The
macroscopic theory of this paper is complete; the microscopic theory awaits an
input of a different kind.

\subsection*{Acknowledgements} 
Hedenmalm acknowledges support from the Ministry of Science and Higher  
Education of the Russian Federation under agreement 075-15-2025-013, as well
as the hospitality of BIMSA.

\end{document}